\documentclass{article}
\usepackage{graphicx}
\usepackage[utf8]{inputenc}

\title{Fried's theorem for  boundary geometries of rank one symmetric spaces}
\author{Raphaël V. {\sc Alexandre}\footnote{Institut de Math\'ematiques de Jussieu-Paris Rive Gauche, Sorbonne Université, 4 Place Jussieu, 75252 Paris Cédex, France. Email address: {\tt raphael.alexandre@imj-prg.fr}}}

\usepackage{amsmath, amsthm, amssymb, amsfonts}

\newtheorem{theorem}{Theorem}[section]
\newtheorem{lemma}[theorem]{Lemma}
\newtheorem{proposition}[theorem]{Proposition}
\newtheorem*{proposition*}{Proposition}
\newtheorem*{theorem*}{Theorem}
\newtheorem{corollary}[theorem]{Corollary}

\usepackage[backend=bibtex,style=alphabetic]{biblatex}
\bibliography{ref.bib}

\begin{document}
\maketitle
\begin{abstract}
After introducing the different boundary geometries of rank one symmetric spaces,  we state and prove Fried's theorem in the general setting of all those geometries: a closed manifold with a similarity structure  is either complete or the developing map is a covering onto the Heisenberg-type space deprived of a point.
\end{abstract}

\section{Introduction}

Let $\mathbf{F}$ be the field of the real, complex, quaternionic or octonionic numbers. We are interested in the boundary geometries $({\rm PU}(n,1;\mathbf{F}),\partial \mathbf{H}^n_\mathbf{F})$. Those structures will be considered for $n\geq 2$. For the non-real case, the  hyperbolic lines (when $n=1$) are isometric to real hyperbolic spaces, so the hypothesis $n\geq2$ is  only  a convenience. When $\mathbf{F}$ is the octonionic field,  we  only consider the  case $n=2$.
For example, when $\mathbf{F}=\mathbf{R}$, we get the flat conformal structure. When $\mathbf{F}=\mathbf{C}$, we get the spherical CR structure. 

In this paper we will prove the following theorem, called Fried's theorem since it was stated and proved by Fried for the real case  in \cite{Fried}. It was also proved for the complex case  by Miner \cite{Miner} and for the quaternionic case by Kamishima \cite{Kamishima}. It seems that the octonionic case has not been proved yet.

Start with a rank one symmetric space.
We set $\mathcal N$ the geodesic boundary of the space deprived of a point and ${\rm Sim}(\mathcal N)$ the subgroup of the isometries stabilizing this point. 

\begin{theorem*}[\ref{thm-fried}]
Let $M$ be a closed $({\rm Sim}(\mathcal{N}),\mathcal{N})$-manifold. If the developing map $D:\tilde M\to \mathcal{N}$ is not a cover onto $\mathcal{N}$, then the holonomy subgroup $\Gamma$ fixes a point in $\mathcal{N}$ and $D$ is in fact a covering onto the complement of this point.
\end{theorem*}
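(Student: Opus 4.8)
The plan is to split according to the value of the similarity ratio on the holonomy: the ``isometric'' branch is essentially classical, and all the work lies in the ``genuinely contracting'' branch.

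Write $\mathrm{Sim}(\mathcal{N}) = \mathcal{N} \rtimes (\mathbf{R}_{>0} \times K)$, with $K$ the compact group of rotations fixing the basepoint and $\mathbf{R}_{>0}$ the automorphic dilations, and let $\lambda\colon \mathrm{Sim}(\mathcal{N}) \to \mathbf{R}_{>0}$ be the homomorphism with $d(gx,gy) = \lambda(g)\, d(x,y)$, where $d$ is the natural left-invariant Carnot--Carathéodory distance on $\mathcal{N}$ (which the dilations scale homothetically). Apply $\lambda$ to the holonomy subgroup $\Gamma$. If $\lambda(\Gamma) = \{1\}$, then $\Gamma$ acts on $\mathcal{N}$ by isometries, so $M$ is a closed manifold locally modelled on $(\mathrm{Isom}(\mathcal{N}), \mathcal{N})$; such a manifold is complete — the standard completeness of closed manifolds locally modelled on a complete homogeneous metric space — so $D$ is a covering onto $\mathcal{N}$, contradicting the hypothesis. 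Hence $\lambda(\Gamma) \neq \{1\}$, and after replacing an element by its inverse we may fix $\gamma_0 \in \Gamma$ with $\lambda(\gamma_0) < 1$. As a contraction of the complete space $(\mathcal{N},d)$, $\gamma_0$ has a unique fixed point $p_0 \in \mathcal{N}$ with $\gamma_0^{\,n} z \to p_0$ for all $z$; on $\partial\mathbf{H}^n_{\mathbf{F}} = \mathcal{N} \sqcup \{\infty\}$ it is loxodromic, with $p_0$ attracting and $\infty$ repelling.

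The heart of the proof is a completeness lemma: $D$ is a covering map onto its image $\Omega := D(\tilde M)$, which (since $D$ is a local homeomorphism and $\tilde M$ connected) is an open connected $\Gamma$-invariant subset of $\mathcal{N}$. I would prove this using compactness of $M$ to obtain a uniform lower bound, of the form $\delta(\tilde x) \ge c\, \beta(\tilde x)$ with $c \in (0,1]$, on the ``developing radius'' $\delta(\tilde x) := \sup\{\, r>0 : D \text{ maps some neighbourhood of } \tilde x \text{ homeomorphically onto } B(D\tilde x, r)\,\}$ in terms of the distance $\beta(\tilde x) := d\bigl(D\tilde x,\ \mathcal{N}\setminus\Omega\bigr)$ from the developed point to the uncovered part of $\mathcal{N}$: a finite atlas of large charts on $M$ gives such a bound at one scale, and one then propagates the injectivity of $D$ outward — rescaling by dilations of $\mathcal{N}$ — until the developed image reaches $\mathcal{N}\setminus\Omega$. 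Given this bound, every $d$-rectifiable path in $\Omega$ lifts through $D$, which yields the covering property. I expect this to be the main obstacle: it is where the geometry of $\mathcal{N}$ genuinely enters — completeness and homogeneity of $d$, the dilation subgroup, and the control of where the developing map stays injective — and the estimate must be uniform over the non-compact $\tilde M$.

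Granting the lemma, $\Gamma$ acts freely, properly discontinuously and cocompactly on $\Omega$, with quotient $M$. Then $p_0 \notin \Omega$: otherwise the points $\gamma_0^{\,n} p_0 = p_0$ would lie in every compact neighbourhood of $p_0$ in $\Omega$, contradicting proper discontinuity; the same argument shows that the attracting fixed point of every loxodromic element of $\Gamma$ lies in $\mathcal{N}\setminus\Omega$. Let $A \subseteq \mathcal{N}\setminus\Omega$ denote the (nonempty, $\Gamma$-invariant) set of these attracting fixed points. I claim $A = \{p_0\}$. Indeed, if $A$ contained two distinct points with finite $\Gamma$-orbits, a finite-index subgroup of $\Gamma$ would fix three distinct points of $\partial\mathbf{H}^n_{\mathbf{F}}$ — those two together with $\infty$ — hence lie in a compact subgroup of $\mathrm{PU}(n,1;\mathbf{F})$ and be finite, forcing $\lambda(\Gamma)$ to be trivial, a contradiction; and an infinite $\Gamma$-orbit inside $A$ is excluded by combining cocompactness of $\Omega/\Gamma$ with the loxodromic dynamics, which would otherwise spread $\mathcal{N}\setminus\Omega$ over too much of $\mathcal{N}$. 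Since $A$ is $\Gamma$-invariant, $\Gamma$ fixes $p_0$.

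Finally, fixing both $p_0$ and $\infty$, the group $\Gamma$ lies in a conjugate of $\mathbf{R}_{>0} \times K$, the stabiliser of the geodesic joining $\infty$ to $p_0$, and it preserves the open set $\Omega \subseteq \mathcal{N}\setminus\{p_0\}$. The $\gamma_0$-orbit of any point of $\Omega$ meets every metric sphere about $p_0$, so $\Omega$ reaches every distance from $p_0$; a compact fundamental set $C \subset \Omega$ lies in an annulus about $p_0$, and the identity $\Gamma C = \Omega$ together with proper discontinuity then forces $\lambda(\Gamma)$ to be discrete — infinite cyclic up to finite index — and the $\Gamma$-translates of $C$ to exhaust all of $\mathcal{N}\setminus\{p_0\}$, so that $\Omega = \mathcal{N}\setminus\{p_0\}$. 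Hence $D\colon \tilde M \to \mathcal{N}\setminus\{p_0\}$ is a covering onto the complement of the $\Gamma$-fixed point $p_0$, as required.
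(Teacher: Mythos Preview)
Your architecture is reasonable and the isometric-holonomy reduction is correct, but the proposal has a genuine gap precisely where you flag it: the ``completeness lemma'' that $D$ is a covering onto $\Omega = D(\tilde M)$. This is not a preliminary step; it is the content of the theorem. Your sketch --- a finite atlas gives the bound at one scale, then ``propagate the injectivity of $D$ outward, rescaling by dilations of $\mathcal N$'' --- does not work as written: arbitrary dilations of $\mathcal N$ are not in $\Gamma$ and do not intertwine with $D$, so they cannot transport injectivity information about the developing map; the only contractions available are those in $\Gamma$, and at this stage of your argument you know just one, centred at $p_0$. There is also a hidden case you have not addressed: the estimate $\delta\ge c\,\beta$ presupposes $\Omega\ne\mathcal N$ (so that $\beta$ is finite), but the hypothesis ``$D$ is not a cover onto $\mathcal N$'' does not by itself exclude surjectivity; ruling that out already requires locating a point missed by $D$, which is again the substance of the theorem. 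The paper takes the opposite route: it follows an incomplete geodesic and uses recurrence in the compact $M$ to manufacture specific holonomy elements $\gamma_{ij}\in\Gamma$ that are contracting, nearly rotation-free, and centred arbitrarily close to a prescribed ``invisible'' endpoint; convexity of balls in $\mathcal N$ then forces each visible set to contain a half-space with affine boundary, the invisible set $I$ is shown to be constant, and only then is the Kulkarni--Pinkall cutting lemma invoked to obtain the covering property onto $\mathcal N\setminus I$. The paper identifies the complement first and proves covering second; you attempt the reverse, and that reversal is exactly where the difficulty lives.

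There is a second, smaller gap in your argument that $A=\{p_0\}$. The finite-orbit case is fine (three fixed boundary points force a compact, hence $\lambda$-trivial, stabiliser), but your dismissal of the infinite-orbit case --- ``would spread $\mathcal N\setminus\Omega$ over too much of $\mathcal N$'' --- is a gesture, not an argument; making it precise again requires geometric control on $\mathcal N\setminus\Omega$ of the kind the paper extracts from its half-space analysis. If you want to salvage your strategy, the honest version of your estimate is that $\delta/\beta$ is $\pi_1(M)$-invariant (both $\delta$ and $\beta$ scale by $\lambda(g)$) and therefore descends to the compact $M$; but this only helps once $\beta<\infty$, i.e.\ once you already know $D$ is not surjective, and that is where the real work begins.
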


The proof proposed here of Fried's theorem \ref{thm-fried} will simultaneous deal with all the cases. This unified proof will  use a general approach of convexity in $\mathcal N$. Convexity arguments were crucial in Fried's initial approach. In the real case, the space and the tangent space on one hand, and the geodesic structure and the algebraic sum on the other hand, are each time essentially the same. Therefore a clarification was required to state general convexity arguments for every field.

A consequence of Fried's theorem is the following result. We keep the notations of the preceding theorem and denote by $L(\Gamma)$ the limit set of the holonomy group $\Gamma$. 

\begin{theorem*}[\ref{thm}]
Let $M$ be a closed $({\rm PU}(n,1;\mathbf F),\partial \mathbf{H}^n_{\mathbf F})$-manifold. 
If $D$ is not surjective then it is a covering onto its image. Furthermore,
 $D$ is a covering on its image if, and only if, $D(\tilde M)$ is equal to a connected component of $\partial \mathbf{H}^n_{\mathbf F}-L(\Gamma)$.
\end{theorem*}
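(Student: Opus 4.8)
The plan is to reduce to Fried's theorem \ref{thm-fried} as soon as the holonomy $\Gamma$ fixes a point of $\partial\mathbf{H}^n_{\mathbf{F}}$, and to work inside a component of the domain of discontinuity otherwise. I would first dispose of a degenerate case: if $\Gamma$ is relatively compact, averaging a Riemannian metric on the compact manifold $\partial\mathbf{H}^n_{\mathbf{F}}$ produces a $\Gamma$-invariant metric $h$, whose pullback $D^*h$ descends to a metric on the closed manifold $M$; hence $\tilde M$ is complete for $D^*h$ and $D$ is a Riemannian covering onto $\partial\mathbf{H}^n_{\mathbf{F}}$, so (the boundary being simply connected) a diffeomorphism. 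Then $L(\Gamma)=\emptyset$ and $D(\tilde M)=\partial\mathbf{H}^n_{\mathbf{F}}$ is the unique component of $\partial\mathbf{H}^n_{\mathbf{F}}-L(\Gamma)$, and both assertions hold. From now on $\Gamma$ is not relatively compact, so $L(\Gamma)\neq\emptyset$; since a covering onto the simply connected sphere $\partial\mathbf{H}^n_{\mathbf{F}}$ would force $\Gamma$ finite, a surjective $D$ is then never a covering, while no component of $\partial\mathbf{H}^n_{\mathbf{F}}-L(\Gamma)$ equals $\partial\mathbf{H}^n_{\mathbf{F}}$ --- so both statements are settled when $D$ is onto. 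It remains to treat a non-surjective $D$, in which case $C:=\partial\mathbf{H}^n_{\mathbf{F}}-D(\tilde M)$ is nonempty, closed and $\Gamma$-invariant.

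Suppose first that $\Gamma$ is elementary. Not being relatively compact, it contains a parabolic or loxodromic element $g$; its forward iterates attract everything except the fixed points of $g$ towards an attracting fixed point, so the closed $\Gamma$-invariant set $C$ contains a fixed point $q$ of $g$, and (passing to the index-two subgroup fixing $L(\Gamma)$ pointwise if $\Gamma$ interchanges the two points of $L(\Gamma)$) we may assume $\Gamma$ fixes $q$. Then $q\notin D(\tilde M)$, and conjugating $q$ to the distinguished point exhibits $M$ as a closed $({\rm Sim}(\mathcal N),\mathcal N)$-manifold with $\mathcal N=\partial\mathbf{H}^n_{\mathbf{F}}-\{q\}$, so Theorem \ref{thm-fried} applies: $D$ is a covering either onto $\mathcal N$, or onto $\mathcal N-\{q'\}$ for a second $\Gamma$-fixed point $q'$; in either case $D$ is a covering onto its image. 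In the first case $\Gamma$ acts properly discontinuously and cocompactly on $\partial\mathbf{H}^n_{\mathbf{F}}-\{q\}$, which forces $L(\Gamma)\subseteq\{q\}$, hence $L(\Gamma)=\{q\}$ as $\Gamma$ is infinite; in the second, $\Gamma$ fixes $q$ and $q'$ and contains a loxodromic with these endpoints (else it would be relatively compact), so $L(\Gamma)=\{q,q'\}$. As $\partial\mathbf{H}^n_{\mathbf{F}}$ minus one or two points is connected (the only exception, $n=2$ over $\mathbf{R}$, forces the first alternative since $D(\tilde M)$ is connected), $D(\tilde M)$ is in both cases the unique component of $\partial\mathbf{H}^n_{\mathbf{F}}-L(\Gamma)$.

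Suppose now that $\Gamma$ is non-elementary. Then $L(\Gamma)$ is the unique minimal nonempty closed $\Gamma$-invariant subset of $\partial\mathbf{H}^n_{\mathbf{F}}$, so $L(\Gamma)\subseteq C$, i.e.\ $D(\tilde M)$ lies in the domain of discontinuity $\partial\mathbf{H}^n_{\mathbf{F}}-L(\Gamma)$, on which $\Gamma$ acts properly discontinuously; being connected, $D(\tilde M)$ lies in a single component $\Omega$, which $\Gamma$ stabilizes. As $D$ is $\Gamma$-equivariant it descends to a local diffeomorphism $\bar D\colon M\to\Omega/\Gamma$ with image $D(\tilde M)/\Gamma$; since $M$ is compact and $\Omega/\Gamma$ is Hausdorff, $\bar D$ is proper, hence a covering onto its image, which is therefore open, closed and nonempty in the connected space $\Omega/\Gamma$, whence $D(\tilde M)/\Gamma=\Omega/\Gamma$ and $D(\tilde M)=\Omega$. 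Lifting $\bar D$ along the coverings $\tilde M\to M$ and $\Omega\to\Omega/\Gamma$ then shows $D$ is a covering onto $\Omega=D(\tilde M)$ (when $\Gamma$ has torsion one argues with the orbifold $\Omega/\Gamma$ or a torsion-free finite-index subgroup). This gives the first assertion. For the remaining implication of the ``furthermore'' statement one assumes $D$ is a covering onto $D(\tilde M)$; then $M\to D(\tilde M)/\Gamma$ is onto, so $D(\tilde M)/\Gamma$ is compact, and the same openness--closedness argument again forces $D(\tilde M)=\Omega$, a component of $\partial\mathbf{H}^n_{\mathbf{F}}-L(\Gamma)$.

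The step I expect to be most delicate is the elementary case --- precisely, the claim that the $\Gamma$-fixed point $q$ must lie outside $D(\tilde M)$ (which relies on the dynamics of parabolic and loxodromic isometries and on $\Gamma$ not being relatively compact), together with checking that the complete alternative of Theorem \ref{thm-fried} occurs only when the deleted point already exhausts $L(\Gamma)$, which is where properness and cocompactness of the $\Gamma$-action on $\mathcal N$ enter. The one-dimensional case $n=2$ over $\mathbf{R}$, where $\partial\mathbf{H}^2_{\mathbf{R}}=S^1$ and deleting two points disconnects, has to be inspected separately but only confirms the statement.
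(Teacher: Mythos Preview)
Your overall plan---splitting according to the nature of $L(\Gamma)$ and invoking Theorem \ref{thm-fried} whenever $\Gamma$ fixes a boundary point---is the same as the paper's, and your treatment of the relatively compact and elementary cases is essentially correct (the passage to an index-two subgroup is harmless: a finite cover of $M$ has the same universal cover, the same developing map $D$, and the same limit set).

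The genuine gap is in the non-elementary case. You assert that $\Gamma$ acts properly discontinuously on $\partial\mathbf{H}^n_{\mathbf F}-L(\Gamma)$ and that $\Omega/\Gamma$ is Hausdorff, and then use compactness of $M$ to make $\bar D$ proper. But a properly discontinuous action forces the acting group to be discrete, and nothing in the hypotheses guarantees this. Non-discrete non-elementary subgroups of ${\rm PU}(n,1;\mathbf F)$ with proper limit set do exist (for instance, a countable dense subgroup of a totally geodesically embedded ${\rm PU}(k,1)$ with $1<k<n$), and they certainly do not act properly discontinuously on the complement of their limit set. Your descent to $\Omega/\Gamma$ and the subsequent properness argument are therefore unjustified. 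The paper handles this case by a different route: once $L(\Gamma)\subset\partial\mathbf{H}^n_{\mathbf F}-D(\tilde M)$ and $L(\Gamma)$ has at least two points, it invokes Proposition \ref{prop-deuxpoints} (the Kulkarni--Pinkall cutting lemma), which yields directly that $D$ is a covering onto a connected component of $\partial\mathbf{H}^n_{\mathbf F}-L(\Gamma)$, with no discreteness hypothesis on $\Gamma$. That proposition is the substantive input your argument is missing.
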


The first section being the present introduction, the second section will introduce the different notions required.
Notations for $(G,X)$-structures will be introduced in \ref{sec:complete}. The third section consists of the proof of Fried's theorem. The forth consists of the proof of theorem \ref{thm}.

\tableofcontents

\section{Hyperbolic geometry}

For the moment we will set the octonionic case aside.
Let $\mathbf{F}$ be the field of the real or complex or quaternionic numbers. We denote by $\mathbf{F}^{n,1}$ the space $\mathbf{F}^{n+1}$ endowed with the quadratic form 
\begin{equation}
Q_E(z_1,\dots,z_{n+1}) = z_1z_1^* + \dots + z_{n}z_{n}^* - z_{n+1}z_{n+1}^* ,
\end{equation}
where $z^*$ denotes the conjugated of $z$. We specify $E$ in $Q_E$ to indicate that $Q$ is given in the canonical basis $(e_1,\dots,e_{n+1})$ of $\mathbf{F}^{n+1}$.

Now, let $F$ be the basis
\begin{equation}
f_1 = \frac{-e_1 + e_{n+1}}{\sqrt 2}, f_2 = e_2 , \dots, f_{n}=e_n, f_{n+1} = \frac{e_1+e_{n+1}}{\sqrt 2}.
\end{equation}
It is clear that it is an orthonormal basis with respect to $Q$. In that basis, $Q_F$ is given by
\begin{equation}
 Q_F(w_1,\dots,w_{n+1}) = w_2w_2^* + \dots+ w_nw_n^* - 2{\rm Re}(w_1w_{n+1}^*).
\end{equation}

Here, we denote by ${\rm Re}(p)$ and ${\rm Im}(p)$ the real and imaginary parts of $p$ such that $p = {\rm Re}(p)+{\rm Im}(p)$ and ${\rm Re}(p) \in \mathbf{R}$ and ${\rm Im}(p)\in {\rm Im}(\mathbf{F})={\rm Re}^{-1}(0)$.

We denote by ${\rm U}(k)$ the group of the unitary matrices of $\mathbf{F}^k$.
We denote by ${\rm U}(n,1)$ the group of the unitary matrices of $\mathbf{F}^{n,1}$, \emph{i.e.} the matrices acting by isometries on $\mathbf{F}^{n,1}$. We denote by $\mathbf{H}^n$ the projectivized of the negative subspace $\{Q(p)<0\}$ and by $\partial \mathbf{H}^n$ the projectivized null subspace $\{Q(p)=0\}$. Of course topologically speaking, $\partial \mathbf{H}^n$ is in fact the boundary of $\mathbf{H}^n$. The image of ${\rm U}(n,1)$ in ${\rm PGL}(\mathbf{F}^{n+1})$ is denoted by ${\rm PU}(n,1)$.

\par\vspace{\baselineskip}\noindent
We recover the ball-model of $\mathbf{H}^n$ as follows. In $\mathbf{P}(\mathbf{F}^{n+1})$ we take the affine chart $z_{n+1}=1$. Thus,
\begin{equation}
Q_E(z) \leq 0 \iff |z_1|^2 +\dots + |z_n|^2 \leq 1.
\end{equation}
In what follows we will take these coordinates of $\mathbf{H}^n$.
In $\mathbf{H}^n$, the vector $f_1$ represents “$-1$”$ = [-1,0,\dots,0,1]$ and $f_{n+1}$ represents “$1$”$=[1,0,\dots,0,1]$ which are  in $\partial \mathbf{H}^n$. In the basis $E$, the vector $e_{n+1}$ represents “$0$”$=[0,\dots,0,1]$ and is in $\mathbf{H}^n$. The other vectors of the basis $E$ and $F$ are not in $\mathbf{H}^n$ nor in $\partial \mathbf{H}^n$.

\subsection{The $KAN$-Iwasawa decomposition}

We  will now always suppose that $n\geq 2$. 

We denote by $K$ the subgroup of ${\rm PU}(n,1)$ given by  the matrices in the canonical basis
\begin{equation}
k = 
\begin{pmatrix}
k'\\& 1
\end{pmatrix}
\end{equation}
where $k'$ belongs to ${\rm U}(n)$.

We denote by ${\rm U}(\mathcal N)$ or also $M$ the subgroup of $K$ stabilizing $\mathbf F f_1$ and $\mathbf F f_{n+1}$. 
In general, see \cite{Kim}, the matrices of $M$ are given in the basis $F$ by 
\begin{equation}
m = 
\begin{pmatrix}
\alpha \\ &\alpha m' \\ && \alpha
\end{pmatrix}
\end{equation}
with $m'\in {\rm U}(n-1)$ and $\alpha\in {\rm U}(1)$. Since we projectivize on the right, the unitary factor $\alpha$ can only be forgotten in the cases $\mathbf F=\mathbf R$ or $\mathbf F=\mathbf C$.

We denote by $A$ the subgroup of ${\rm PU}(n,1)$ given by the matrices in the basis $F$
\begin{equation}
a_t = 
\begin{pmatrix}
e^{-t} \\ & E_{n-1} \\ && e^t
\end{pmatrix}
\end{equation}
where $t\in \mathbf{R}$ and $E_{n-1}$ is the identity matrix.
We denote by $N$ the subgroup of ${\rm PU}(n,1)$ given by the matrices in the basis $F$
\begin{equation}
n_{u,I} = 
\begin{pmatrix}
1 \\u & E_{n-1}\\\|u\|^2/2 + I & {^t}u^* & 1
\end{pmatrix}
\end{equation}
where $(u,I)\in \mathbf{F}^{n-1}\times{\rm Im}(\mathbf{F})$. The vector $u$ is to be thought as in $(1,u,1)$ in $\mathbf{F}^{n,1}$.

We denote by $P$ the product $MAN$.
Classical linear algebra shows the following lemma.
\begin{lemma}\label{kan}
The following properties are true.
\begin{enumerate}
\item The action of $N$ is free and transitive on $\partial \mathbf{H}^n-\{1\}$.
\item The subgroup $K$ is the stabilizer of $0$ and is transitive on $\partial \mathbf{H}^n$.
\item We have ${\rm PU}(n,1)=KP=KAN$.
\item The subgroup $P$ is the stabilizer of $1$ and is transitive on $\mathbf{H}^n$.
\end{enumerate}
\end{lemma}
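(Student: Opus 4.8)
The plan is to establish the four assertions directly, in the order (1), (2), (4), (3), each reducing either to a short matrix computation in the basis $F$ or to a short geometric argument in the ball model. For (1), I would first describe $\partial\mathbf{H}^n-\{1\}$ explicitly: setting $u=(w_2,\dots,w_n)$, a null line $[w]$ satisfies $Q_F(w)=\|u\|^2-2{\rm Re}(w_1w_{n+1}^*)=0$, and if $w_1=0$ this forces $u=0$, i.e.\ $[w]=[f_{n+1}]=1$; so away from $1$ one normalizes $w_1=1$ and then $[w]=[(1,u,\|u\|^2/2+I)]$ for a unique $(u,I)\in\mathbf{F}^{n-1}\times{\rm Im}(\mathbf{F})$. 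Since the first column of $n_{u,I}$ is precisely $(1,u,\|u\|^2/2+I)$ and its last column is proportional to $f_{n+1}$, the group $N$ preserves $\partial\mathbf{H}^n-\{1\}$ and the orbit map $n_{u,I}\mapsto n_{u,I}\cdot[f_1]$ is a bijection onto it; transitivity is then immediate, and injectivity of the orbit map shows the stabilizer of $[f_1]=-1$ is trivial, hence, together with transitivity, that the action is free.

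For (2), in the affine chart $z_{n+1}=1$ the boundary $\partial\mathbf{H}^n$ is the unit sphere of $\mathbf{F}^n$ and $K$ acts on it through the tautological action of ${\rm U}(n)$, which is transitive; and $K$ visibly fixes $0=[e_{n+1}]$. Conversely, an element of ${\rm PU}(n,1)$ fixing the negative line $[e_{n+1}]$ lifts to a $Q_E$-isometry preserving $\mathbf{F}e_{n+1}$, hence preserving the $Q_E$-orthogonal complement ${\rm span}(e_1,\dots,e_n)$, on which $Q_E$ is positive definite; so the lift is block-diagonal with blocks in ${\rm U}(n)$ and ${\rm U}(1)$, and after absorbing the residual scalar exactly as discussed for $M$, it represents an element of $K$. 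Thus $K$ is precisely the stabilizer of $0$.

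For (4), the matrix forms show that $A$, $N$ and $M$ each fix $1=[f_{n+1}]$, so $P=MAN$ fixes $1$. For transitivity on $\mathbf{H}^n$ it suffices that $NA\cdot 0=\mathbf{H}^n$: given $x\in\mathbf{H}^n$, let $y\neq 1$ be the second endpoint of the geodesic through the boundary point $1$ and the interior point $x$, choose $n\in N$ with $n\cdot(-1)=y$ by (1), and note that $n^{-1}$ fixes $1$ and carries $y$ to $-1$, so $n^{-1}x$ lies on the geodesic from $-1$ to $1$, which is the $A$-orbit of $0$; hence $x\in NA\cdot 0\subseteq P\cdot 0$. To see $P$ is the \emph{whole} stabilizer of $1$, take $g$ fixing $1$ and pick $p\in P$ with $p\cdot 0=g\cdot 0$; then $p^{-1}g$ fixes both $0$ and $1$, so by (2) it lies in $K$ and fixes the geodesic from $-1$ to $1$ pointwise, in particular fixes $-1$; being in $K$ and stabilizing $\mathbf{F}f_1$ and $\mathbf{F}f_{n+1}$, it lies in $M$, whence $g\in pM\subseteq P$. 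Finally (3) is then formal: since $M\subseteq K$ we get $KP=KMAN=KAN$, and since $P$ is the stabilizer of $1$ by (4) while ${\rm PU}(n,1)$ already acts transitively on $\partial\mathbf{H}^n$ (because $K$ does, by (2)), we may identify $\partial\mathbf{H}^n$ with ${\rm PU}(n,1)/P$, and transitivity of $K$ on $\partial\mathbf{H}^n$ gives ${\rm PU}(n,1)=KP$.

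I expect the only step needing genuine care to be the noncommutative bookkeeping over the quaternions: tracking the residual unitary factor in $M$, and correspondingly in the stabilizer of $0$, when projectivizing ${\rm U}(n,1)$ on the right, and checking that $N$ is actually closed under multiplication, i.e.\ verifying the generalized Heisenberg law $n_{u,I}\,n_{v,J}=n_{u+v,\,I+J+{\rm Im}({^t}u^*v)}$. Everything else is routine linear algebra, and the geometric inputs used (existence of the geodesic through a boundary point and an interior point, and rigidity of an isometry fixing two points of a geodesic) are standard for negatively curved symmetric spaces.
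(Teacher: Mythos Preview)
The paper offers no argument beyond the line ``Classical linear algebra shows the following lemma,'' so your proposal is not competing with a proof but supplying one; the explicit parametrisation of null lines for (1), the block-diagonal description of the stabiliser of $0$ for (2), the geodesic/orbit argument for (4), and the orbit--stabiliser reduction for (3) are all correct and are exactly the kind of routine verifications the paper is gesturing at.

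One caution on the point you yourself flag. You write that the residual ${\rm U}(1)$ block in the stabiliser of $0$ can be ``absorbed exactly as discussed for $M$,'' but the paper's discussion of $M$ says precisely the opposite over the quaternions (``the unitary factor $\alpha$ can only be forgotten in the cases $\mathbf{F}=\mathbf{R}$ or $\mathbf{F}=\mathbf{C}$''). Over $\mathbf{H}$ the centre of ${\rm U}(n,1;\mathbf{H})$ is only $\{\pm I\}$, so ${\rm diag}(A,\lambda)$ with a unit quaternion $\lambda$ is not projectively equivalent to some ${\rm diag}(A',1)$; the full stabiliser of $[e_{n+1}]$ is $({\rm Sp}(n)\times{\rm Sp}(1))/\{\pm I\}$, strictly larger than the paper's $K=\{{\rm diag}(k',1)\}$ as literally written. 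This is a wrinkle in the paper's definitions (indeed the general form it gives for $M$ already fails to sit inside $\{{\rm diag}(k',1)\}$ when $\alpha\neq\pm 1$), not a defect in your strategy; just do not claim the scalar is absorbed.
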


\subsection{The group $N$}

We will now closely study  the subgroup $N\subset {\rm PU}(n,1)$. First, we identify $n_{u,I}$ with the couple $(u,I)\in \mathbf{F}^{n-1}\times {\rm Im}(\mathbf{F})$.
We have:
\begin{align}
(u,I)+(v,J) &:= n_{u,I}n_{v,J} \nonumber \\
  &=
  \begin{pmatrix}
  1 \\
  u+v & E_{n-1} \\
  \|u\|^2/2 + I + u^*v + \|v\|^2/2 + J  & {^t}(u+v)^* &1
  \end{pmatrix} \nonumber \\
  &=
  \begin{pmatrix}
  1 \\
  u+v & E_{n-1}   \\
  \|u+v\|^2/2+{\rm Im}(u^*v) +I+J & {^t}(u+v)^* & 1
  \end{pmatrix}\nonumber\\
  &= (u+v, I+J + {\rm Im}(u^*v)).
\end{align}
Indeed, computations show that
\begin{align}
{\rm Im}(u){\rm Im}(v) &= {\rm Re}(u){\rm Re}(v) + {\rm Re}(u){\rm Im}(v) - {\rm Re}(v){\rm Im}(u) - u^*v \\
  {\rm Im}(u){\rm Im}(v) + {\rm Im}(v){\rm Im}(u) &= 2{\rm Re}(u){\rm Re}(v) - 2{\rm Im}(u^*v)\\
\|u+v\|^2   &= {\rm Re}(u)^2 + 2 {\rm Re}(u){\rm Re}(v) - {\rm Im}(u)^2 - {\rm Im}(u){\rm Im}(v)\nonumber  \\
& \hphantom{=}+{\rm Re}(v)^2 - {\rm Im}(v){\rm Im}(u) - {\rm Im}(v)^2  \\
&= \|u\|^2 + \|v\|^2 + 2 {\rm Re}(u^*v)\\
\|u\|^2/2 +  \|v\|^2/2 + u^*v &= \|u+v\|^2/2 + {\rm Im}(u^*v).
\end{align}

This addition is not commutative, since ${\rm Im}(u^*v)$ is not symmetric. We have $-(u,I)=(-u,-I)$. We also have additivity since
\begin{align}
{\rm Im}(u^*v) + {\rm Im}((u+v)^*w) &= {\rm Im}(u^*v) + {\rm Im}(u^*w) + {\rm Im}(v^*w) \\ 
&= {\rm Im}(u^*(v+w))+{\rm Im}(v^*w).
\end{align}

By the action of $P$ on $\partial \mathbf{H}^n$, the point $1$ is fixed. Also, the action of $N$ on the point $-1$ is free and transitive on $\partial \mathbf{H}^n-\{1\}$ (see lemma \ref{kan}).  We denote by $\mathcal{N}$ the  image of $-1$ by the correspondance $n_{u,s}\leftrightarrow (u,s)$.
Thus, we geometrically obtain  $\mathcal{N}=\partial \mathbf{H}^n-\{1\}$ and also $\mathcal{N}$ isomorphic to $N$. Note that, $-1\in \partial \mathbf{H}^n$ corresponds to $0\in \mathcal{N}$ and $1\in \partial \mathbf{H}^n$ to $\infty\in \mathcal{N}$.

Now, we look at the action of $M$ and $A$ on $N$ by looking of the action  on $-1$. 
Easy calculations show that
\begin{align}
mn_{u,I}(-1) &= n_{\alpha m'u \alpha^{-1},\alpha I\alpha^{-1}}(-1),\\
a_tn_{u,I}(-1) &= n_{e^tu,e^{2t}I}(-1).
\end{align}
Thus, we can see that the correspondance $n_{u,I}\leftrightarrow (u,I)$ gives $a_tn_{u,I} \leftrightarrow (e^t u ,e^{2t} I)$ and $mn_{u,I} \leftrightarrow (\alpha m'u\alpha^{-1},\alpha I\alpha^{-1})$. Changing $e^t$ for $\lambda$ and $(\alpha m',\alpha)$ for $(P,\alpha)$, denoting $\lambda:= a_t$ and $P:=m$, we get the actions :
\begin{align}
P(u,I) &= (Pu\alpha^{-1},\alpha I \alpha^{-1})\\
\lambda (u,I) &= (\lambda u, \lambda^2 I).
\end{align}
Remark that the actions of $A$ and $M$ commute.
If $f\in {\rm Sim}(\mathcal{N})$, then we can express $f$ as
\begin{equation}
f(x) = \lambda P(x) + c
\end{equation}
with $\lambda \in \mathbf{R}_+$, $P\in {\rm U}(\mathcal N)$ and $c\in \mathcal{N}$.

\par\vspace{\baselineskip}\noindent
Now we give a pseudo-norm on $\mathcal{N}$ (compare with \cite[p. 10]{Cowling}). For $x = (u,I)\in \mathcal{N}$ we define 
\begin{equation}
 \|x\|_\mathcal{N}^2 := \|u\|_{\mathbf{F}^{n-2}}^2 + \|I\|_\mathbf{F}. 
\end{equation}
It is easy to see that $\|x\|=0$ if and only if $x=(0,0)$ and that
\begin{equation}
 \|\lambda x\|_\mathcal{N} = \lambda \|x\|_\mathcal{N}
\end{equation}
for any $\lambda \in \mathbf{R}_+$. The triangle inequality is respected.

To $\|\cdot\|_\mathcal{N}$ we can associate the distance function 
\begin{equation}
d_\mathcal{N}(x,y) = \|-x+y\| = d_{\mathcal N}(0,-x+y).
\end{equation}
and remark that it is symmetric since $\|-(u,I)\|=\|(-u,-I)\|=\|(u,I)\|$ and $\|-y+x\|=\|-(-x+y)\|$.

\subsection{The octonionic case}

The special case $\mathbf F = \mathbf O$ needs extra care (see \cite{Baez} for a general survey on the octonions). Indeed, $\mathbf O^3$ is not a vector space and the preceding construction relies on this structure. However, as explained by Allcock in \cite{Allcock} (compare also with \cite{Harvey} for the projective plane), most of the construction can still be made.

We  can still speak of the hyperbolic plane $\mathbf H^2_{\mathbf O}$ as a smooth manifold. It can even be made from elements of $\mathbf O^3$ such that the coordinates lie in an associative algebra \cite[p. 12]{Allcock} (we have different conventions).

Again, ${\rm PU}(2,1;\mathbf O) := {\rm Aut}(\mathbf H^2_{\mathbf O})$ acts transitively on the boundary at infinity (which can still be defined) $\partial \mathbf H^2_{\mathbf O}$. The $15$-dimensional subgroup N  stabilizing a point $\infty \in\partial \mathbf H^2_{\mathbf O}$ is similar to the Heisenberg group (it is a $H$-type group -- see \cite{Cowling} for an algebraic approach), hence giving the space $\mathcal N$ by the same identification. Translations are again given by
\begin{equation}
(x,z) + (\xi,\eta) = (x+\xi, z+\eta + {\rm Im}(x^*\eta)).
\end{equation}
Here, the coordinates $(x,z)$ are as previously with $x\in\mathbf O$ and $z\in {\rm Im}(\mathbf O)$. For the same reason as for the other fields, the addition of $\mathcal N$ remains associative since the addition in $\mathbf O$ is associative, and the addition in $\mathcal N$ is not commutative.
As in the general case, the commutator of two translations is given by $(0,2{\rm Im}(x^*\eta))$.
If $\mu$ is a unit imaginary, then we can define a rotation by $\mu$ on $\mathcal N$.
\begin{equation}
m_\mu (x,z) = (\mu x, \mu z \mu^{-1}).
\end{equation}
It gives a compact group of rotations.
If $\lambda\in \mathbf R_+$, then again we have dilatations given by
\begin{equation}
\lambda (x,z) = (\lambda x, \lambda^2 z).
\end{equation}
So the group ${\rm Sim}(\mathcal N)$ is very similar to the previous cases.
We also remark that the stabilizer $K$ of a point in the interior $0\in \mathbf H^2_{\mathbf O}$ is still a compact group and again $ \mathbf H^2_{\mathbf O} = {\rm Aut}( \mathbf H^2_{\mathbf O})/K$. Finally, the same norm $\|\cdot\|_{\mathcal N}$ can be defined and used as previously.

More references can be given. In \cite{Riehm1}, Riehm shows that the isometry group of $\partial \mathbf H^2_{\mathbf O}$ stabilizing $\infty$ is transitive on the unit sphere. In \cite{Riehm2}, Riehm shows that every geodesic is a one-parameter group of global isometries. Those results and  a full description of the automorphism group are described in \cite{Cowling}. Note that those results are common for every field $\mathbf F$ we consider here.

\subsection{Limit sets}

In what follows we will use limit sets (compare with \cite{Chen}). Given a subgroup $\Gamma\subset {\rm PU}(n,1)$, we can define the \emph{limit set of $\Gamma$}, $L(\Gamma)$, as $\overline{\Gamma\cdot p}\cap \partial \mathbf{H}^n$ for any $p\in \mathbf{H}^n$.
This does not depend on the choice of $p$ since for $q\in \mathbf{H}^n$, the distance between $p$ and $q$ is preserved by elements of $\Gamma$. Hence, if $g_np\to x\in \partial \mathbf{H}^n$, then so must $g_nq$.

Also, for $x\in L(\Gamma)$ such as $x=\lim g_np$,  any other point $z$ of $\partial \mathbf{H}^n$ except possibly one must verify $\lim g_n z = x$. Indeed, given $z_1,z_2$  different from one another and both distinct from $x$, then for $\gamma$ a  geodesic between $z_1,z_2$, since the points of the geodesic go to $x$, then so must one extremity of the geodesic. If $y$ is the only point that does not tend to $x$, then we call $(x,y)$ a \emph{dual pair}, or \emph{dual points}. In this case, it is easy to see that $\lim g_n^{-1} p =y$.

The limit set $L(\Gamma)$ is stable by the action of $\Gamma$. Moreover, it is the minimal invariant set by $\Gamma$: if $A\subset \partial \mathbf{H}^n$ is \emph{closed and invariant by $\Gamma$ and is at least constituted of two points}, then $L(\Gamma)\subset A$. This fact can be deduced from the preceding remark.

The following lemma will be used in what follows. The proof is easily extended to the case where $\mathbf{F}$ is the octonion field. It uses the fact that $K$ in the decomposition $KAN$ is maximal compact.
Another way to deduce this lemma is given by CAT(0) theory (see \cite[p. 179]{Bridson}).

\begin{lemma}[See {\protect\cite[p. 79]{Chen}}]\label{cg-lem-fixpt}Let $\Gamma\subset {\rm PU}(n,1)$ be a subgroup.
Suppose $L(\Gamma)=\emptyset$, then the elements of $\Gamma$ let a point  fixed in $\mathbf{H}^n$.
\end{lemma}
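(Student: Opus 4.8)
The statement asserts that a subgroup $\Gamma \subset {\rm PU}(n,1)$ with empty limit set fixes a point in $\mathbf{H}^n$. The plan is to exhibit a $\Gamma$-invariant bounded set in $\mathbf{H}^n$ and then extract a canonical fixed point from it using the nonpositive curvature (or the maximal-compactness of $K$). First I would pick a basepoint $p \in \mathbf{H}^n$ and consider the orbit $\Gamma \cdot p$. The closure $\overline{\Gamma \cdot p}$ meets $\partial\mathbf{H}^n$ precisely in $L(\Gamma) = \emptyset$ by definition of the limit set, so $\overline{\Gamma\cdot p}$ is a closed subset of $\mathbf{H}^n$ that contains no boundary points; since $\overline{\mathbf{H}^n} = \mathbf{H}^n \cup \partial\mathbf{H}^n$ is compact, $\overline{\Gamma\cdot p}$ is itself compact, hence the orbit $\Gamma\cdot p$ is bounded in $\mathbf{H}^n$.

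Next I would produce the fixed point. The cleanest route is via ${\rm CAT}(0)$ geometry: $\mathbf{H}^n$ (real, complex, quaternionic, or octonionic hyperbolic space) is a complete ${\rm CAT}(0)$ — indeed ${\rm CAT}(-1)$ after rescaling, but ${\rm CAT}(0)$ suffices — metric space, and the $\Gamma$-action is by isometries. A bounded $\Gamma$-invariant set has a well-defined circumcenter (the unique center of the smallest enclosing ball), which is necessarily fixed by every isometry preserving the set; applying this to $\overline{\Gamma\cdot p}$ gives a point of $\mathbf{H}^n$ fixed by $\Gamma$. This is exactly the argument referenced in \cite[p.~179]{Bridson}. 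Alternatively, and without invoking ${\rm CAT}(0)$ theory, one can argue group-theoretically: by Lemma~\ref{kan} the full isometry group acts transitively on $\mathbf{H}^n$ with point stabilizer $K$ maximal compact, so fixing a point is equivalent to $\Gamma$ being conjugate into $K$; a bounded orbit means the closure of $\Gamma$ in ${\rm PU}(n,1)$ is a compact subgroup (the orbit map to $\mathbf{H}^n = G/K$ being proper on the complement of the parabolic/loxodromic behaviour), and every compact subgroup is contained in a maximal compact one, i.e. in some conjugate of $K$, whose fixed point does the job. The octonionic case, $n=2$, $\mathbf{F}=\mathbf{O}$, is handled identically since $\mathbf{H}^2_{\mathbf{O}}$ is still a complete ${\rm CAT}(0)$ manifold of the form ${\rm Aut}(\mathbf{H}^2_{\mathbf{O}})/K$ with $K$ compact, as recalled in the previous subsection.

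The main obstacle is not conceptual but a matter of justifying that "no limit points" upgrades to "bounded orbit." One must be careful that $L(\Gamma) = \overline{\Gamma\cdot p}\cap\partial\mathbf{H}^n$ really captures \emph{all} accumulation of the orbit at infinity: if $\Gamma\cdot p$ were unbounded, some subsequence $\gamma_n p$ would leave every compact set, and by compactness of $\overline{\mathbf{H}^n}$ it would subconverge to a boundary point, contradicting $L(\Gamma)=\emptyset$. This is straightforward here. After that, the circumcenter construction (existence and uniqueness of the circumradius ball, from convexity of the metric) is the standard ${\rm CAT}(0)$ fact and requires no further work; one simply cites \cite{Bridson}. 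A minor point worth a sentence is that one should confirm the circumcenter lies in the \emph{open} part $\mathbf{H}^n$ rather than on $\partial\mathbf{H}^n$ — which is automatic since the enclosing ball is centered at a point whose distance to the bounded set $\overline{\Gamma\cdot p}\subset\mathbf{H}^n$ is finite, so the center is at finite distance from interior points and hence interior itself.
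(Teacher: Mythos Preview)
Your proposal is correct and matches the paper's own treatment: the paper does not give a detailed proof but only indicates two routes --- the maximal compactness of $K$ in the $KAN$ decomposition, and the ${\rm CAT}(0)$ argument citing \cite[p.~179]{Bridson} --- both of which you spell out. Your preliminary step (empty limit set $\Rightarrow$ bounded orbit via compactness of $\overline{\mathbf{H}^n}$) is the natural one and is implicit in the references the paper invokes.
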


\subsection{Complete structures}\label{sec:complete}

A $(G,X)$-structure (compare with Thurston's textbook \cite{Thurston}), is a pair of a smooth space $X$ with a transitive group $G$ acting by analytic diffeomorphisms. (A more general notion in \cite{Thurston} appears by allowing diffeomorphisms to be only locally defined, but we won't need this generality here.) A smooth manifold $M$ gets a $(G,X)$-structure if we can choose an atlas of $M$ consisting of charts defined on open sets of $X$ and with transition maps belonging to $G$. In this case we speak of a $(G,X)$-manifold to designate a manifold together with a $(G,X)$-structure.

Such a $(G,X)$-structure on a smooth manifold $M$ gives a pair $(D,\rho)$ of the \emph{developing map} $D:\tilde M \to X$ and the \emph{holonomy map} $\rho:\pi_1(M)\to G$. The developing map is a local diffeomorphism. The image of the holonomy map is called the \emph{holonomy group}, and is usually denoted here by $\Gamma=\rho(\pi_1(M))$. This pair $(D,\rho)$ prescribes the $(G,X)$-structure on $M$.

Two general problems naturally arise: it is hard to say when it is possible to put a $(G,X)$-structure on $M$ (it is the \emph{geometrization} problem); and there are very few general properties on a pair $(D,\rho)$. Two properties on $(D,\rho)$ are interesting to investigate: the completeness ($D$ is a covering), and the discreteness ($\Gamma$ is discrete in $G$).

We will say that the structure is \emph{complete} if the developing map $D:\tilde M\to X$ is a covering onto $X$.  Of course, complete $({\rm PU}(n,1),\partial\mathbf H^n)$-manifolds are rare (those have a spherical structure), and we will rather ask if $D:\tilde M \to X$ is a covering onto its image (but we keep the term \emph{complete} for a covering onto $X$). Remark that if $D$ is a covering map onto a simply connected space, then $\Gamma$ is discrete. (The converse is not true in general, see for example \cite{Falbel8}.)

The complete structures that we will encounter come from the following lemma. None of these results are new. (Compare again with \cite{Thurston}.)

\begin{lemma}\label{lem-comp-met}
Let $(G,X)$ be a geometrical structure such that $X$ has a $G$-invariant riemannian metric. If $M$ is a closed $(G,X)$-manifold, then it is complete.
\end{lemma}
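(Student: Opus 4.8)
The plan is to pull the $G$-invariant metric back to $M$ and then read off completeness of the developing map from completeness in the Riemannian sense. Write $g_X$ for the given $G$-invariant metric on $X$. Because the transition maps of a $(G,X)$-atlas lie in $G$ and every element of $G$ acts on $X$ by isometries of $g_X$, the pullbacks of $g_X$ by the charts of the atlas agree on overlaps; hence they glue into a well-defined Riemannian metric $g_M$ on $M$, and with respect to $g_M$ every chart of the atlas is a local isometry onto an open subset of $X$.

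Next I would pass to the universal cover. Lifting $g_M$ along $\tilde M\to M$ gives a Riemannian metric $g_{\tilde M}$, and, since in a neighbourhood of each point the developing map coincides with a chart up to post-composition by an element of $G$, the map $D\colon(\tilde M,g_{\tilde M})\to(X,g_X)$ is a local isometry. As $M$ is closed, $(M,g_M)$ is a complete Riemannian manifold by the Hopf--Rinow theorem, and completeness is inherited by Riemannian coverings (geodesics lift and extend for all time), so $(\tilde M,g_{\tilde M})$ is complete as well.

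It then remains to invoke the classical fact that a local isometry from a complete connected Riemannian manifold to a connected Riemannian manifold is automatically a surjective Riemannian covering map: completeness of $\tilde M$ guarantees that every geodesic of $X$ issued from a point of $D(\tilde M)$ lifts through $D$ and stays defined for all time, which forces $D(\tilde M)$ to be open and closed, hence equal to the connected space $X$, and makes small geodesic balls evenly covered. Applying this to $D$ shows it is a covering onto $X$, i.e. that $M$ is complete. This last point --- the covering criterion for local isometries out of a complete manifold --- is the only step with genuine content (it is the metric analogue of Thurston's completeness criterion, is standard, and can be argued directly via geodesic lifting; compare \cite{Thurston}); the rest is bookkeeping about $(G,X)$-atlases.
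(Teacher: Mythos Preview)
Your argument is correct and is essentially the same as the paper's: pull back the $G$-invariant metric so that $D$ becomes a local isometry, then use compactness of $M$ (hence completeness of $\tilde M$) to conclude that $D$ is a covering. The paper compresses the Hopf--Rinow step and the covering criterion into the single sentence ``Since $M$ is closed, $D$ is a covering map,'' whereas you spell these out, but the route is identical.
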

\begin{proof}
Let $D:\tilde M \to X$ be the developing map. It is a local diffeomorphism. Pulling-back the metric on $\tilde M$, it makes $D$ a local isometry. Since $M$ is closed, $D$ is a covering map.
\end{proof}

Therefore, to show that a $(G,X)$-structure is complete for a closed manifold $M$, it suffices to show that $G$ has compact stabilizers  on $X$ (compare with \cite[p. 144]{Thurston}).

Recall that $K\subset {\rm PU}(n,1)$ from the Iwasawa decomposition is compact. And remark that $({\rm Sim}(\mathcal{N})_0,\mathcal{N}-\{0\})$ has compact stabilizers.

\begin{lemma}\label{cov-lem-k}\label{lem-n-com}
Any closed $(K, \partial \mathbf{H}^n)$ or $({\rm Sim}(\mathcal{N})_0,\mathcal{N}-\{0\})$-manifold is complete.\qed
\end{lemma}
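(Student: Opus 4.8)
The plan is to reduce everything to the principle stated just above the lemma: a $(G,X)$-geometry whose point-stabilizers are compact carries a $G$-invariant Riemannian metric (average an inner product at one point over the compact stabilizer and transport it by the transitive action), so that Lemma \ref{lem-comp-met} forces any closed $(G,X)$-manifold to be complete. Thus it suffices, in each of the two cases, to exhibit compact stabilizers; the transitivity needed to make the geometry well-defined is Lemma \ref{kan} in the first case and the transitivity of the rotation group on the unit sphere (Riehm, as recalled above) together with the dilations in the second.

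For $(K,\partial\mathbf H^n)$ this is immediate: by Lemma \ref{kan} the group $K$ is the stabilizer of $0$ and acts transitively on $\partial\mathbf H^n$, and $K\cong{\rm U}(n)$ is compact; hence the stabilizer of any point is a closed subgroup of $K$, so compact. For $({\rm Sim}(\mathcal N)_0,\mathcal N-\{0\})$, recall that ${\rm Sim}(\mathcal N)_0$ consists of the maps $x\mapsto\lambda P(x)$ with $\lambda\in\mathbf R_+$ and $P\in{\rm U}(\mathcal N)$, which do preserve $\mathcal N-\{0\}$. Fix $x_0\in\mathcal N-\{0\}$ and suppose $\lambda P(x_0)=x_0$. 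Since $P$ acts on coordinates by $(u,I)\mapsto(Pu\alpha^{-1},\alpha I\alpha^{-1})$ with $P$ unitary and $\alpha\in{\rm U}(1)$, it preserves $\|\cdot\|_{\mathcal N}$, while $\|\lambda x\|_{\mathcal N}=\lambda\|x\|_{\mathcal N}$; hence $\|x_0\|_{\mathcal N}=\lambda\|x_0\|_{\mathcal N}$ and, as $x_0\neq 0$, $\lambda=1$. Therefore the stabilizer of $x_0$ is $\{P\in{\rm U}(\mathcal N):P(x_0)=x_0\}$, a closed subgroup of the compact group ${\rm U}(\mathcal N)$, hence compact. Lemma \ref{lem-comp-met} then applies in both cases.

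There is essentially no obstacle here: the only point that is not a direct quotation of the preceding general principle or of the compactness of $K$ and ${\rm U}(\mathcal N)$ is the one-line observation that the homogeneity of the pseudo-norm $\|\cdot\|_{\mathcal N}$ forces the dilation factor of an element fixing a nonzero point to be $1$. This is why the statement is recorded with \qed.
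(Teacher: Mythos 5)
Your proof is correct and follows exactly the route the paper intends: the lemma is stated with \qed precisely because it is the remark preceding it (compact stabilizers yield an invariant Riemannian metric, so Lemma \ref{lem-comp-met} applies), and you simply supply the details — compactness of $K$ itself in the first case, and the homogeneity of $\|\cdot\|_{\mathcal N}$ forcing $\lambda=1$ in the second. No discrepancy with the paper's argument.
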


\begin{lemma}\label{lem-gamma-eq}Let $M$ be a closed $({\rm PU}(n,1),\partial\mathbf H^n)$-manifold. Let $\Gamma$ be the holonomy group.
If $L(\Gamma)=\emptyset$, then $D:\tilde M \to \partial \mathbf H^n$ is  a covering map (therefore a diffeomorphism).
\end{lemma}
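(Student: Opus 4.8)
The plan is to convert the emptiness of the limit set into a compactness statement for $\Gamma$, and then to appeal to the completeness criterion for closed $(K,\partial\mathbf H^n)$-manifolds recorded in Lemma \ref{cov-lem-k}.

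First I would invoke Lemma \ref{cg-lem-fixpt}: since $L(\Gamma)=\emptyset$, there is a point $p\in\mathbf H^n$ fixed by every element of $\Gamma$, so $\Gamma$ lies in the stabilizer of $p$ in ${\rm PU}(n,1)$. By Lemma \ref{kan}, ${\rm PU}(n,1)$ is transitive on $\mathbf H^n$ and $K$ is the stabilizer of $0$ (and is transitive on $\partial\mathbf H^n$); choosing $g\in{\rm PU}(n,1)$ with $g\cdot 0=p$ identifies the stabilizer of $p$ with the conjugate $gKg^{-1}$. Replacing the developing pair $(D,\rho)$ by $(g^{-1}\!\circ D,\ g^{-1}\rho(\,\cdot\,)g)$, which changes neither the hypotheses nor the conclusion of the lemma, I may assume from now on that $\Gamma\subseteq K$.

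Next I would observe that $M$ is then a $(K,\partial\mathbf H^n)$-manifold: the group $K$ acts transitively on $\partial\mathbf H^n$, so $(K,\partial\mathbf H^n)$ is a geometry in the sense of \ref{sec:complete}, and in the atlas of $M$ furnished by $(D,\rho)$ every transition map is the holonomy $\rho(\gamma)$ of some deck transformation $\gamma$, hence lies in $\Gamma\subseteq K$. Lemma \ref{cov-lem-k} now applies directly: since $M$ is closed, this $(K,\partial\mathbf H^n)$-structure is complete, i.e. $D:\tilde M\to\partial\mathbf H^n$ is a covering map. As $\partial\mathbf H^n$ is simply connected, $D$ must be a diffeomorphism. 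One may equally bypass the reduction to $K$ and argue directly as in the proof of Lemma \ref{lem-comp-met}: the compact group $\Gamma$ preserves some Riemannian metric on $\partial\mathbf H^n$, its pullback by $D$ descends to the closed manifold $M$, and a local isometry out of a complete Riemannian manifold is a covering map.

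The only step carrying genuine content is the reduction of the structure group to the maximal compact $K$, and it hinges on two inputs: Lemma \ref{cg-lem-fixpt}, which produces the common fixed point in $\mathbf H^n$, and the transitivity of $K$ on $\partial\mathbf H^n$ — without the latter the reduced data would not be a legitimate $(K,\partial\mathbf H^n)$-geometry and Lemma \ref{cov-lem-k} could not be quoted. Everything downstream of that reduction is formal.
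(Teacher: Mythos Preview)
Your argument is correct and follows exactly the same route as the paper: invoke Lemma~\ref{cg-lem-fixpt} to obtain a fixed point in $\mathbf H^n$, conjugate so that $\Gamma\subset K$, and then apply Lemma~\ref{cov-lem-k}. You have simply spelled out in more detail the conjugation step and the passage to a $(K,\partial\mathbf H^n)$-atlas, and added the (equally valid) alternative of pulling back a $K$-invariant metric directly; the paper's proof is the same argument compressed into three lines.
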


\begin{proof}
If $L(\Gamma)=\emptyset$, then the elements of $\Gamma$ let a point fixed in $\mathbf{H}^n$ by lemma \ref{cg-lem-fixpt}. Up to conjugation it is $0$, hence $\Gamma\subset K$. By lemma \ref{cov-lem-k}, it follows that $D$ is a covering map.
\end{proof}

\begin{corollary}Let $M$ be a closed $({\rm PU}(n,1),\partial \mathbf{H}^n)$-manifold.
If $L(\Gamma)=\emptyset$, then $M$ is a spherical manifold.
In particular, if $M$ is a simply connected closed $({\rm PU}(n,1),\partial \mathbf{H}^n)$-manifold, then $M$ is  diffeomorphic to a  sphere.
\end{corollary}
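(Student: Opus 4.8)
The plan is to feed Lemma~\ref{lem-gamma-eq} into the elementary observation that the compact group $K$ acts on $\partial\mathbf H^n$ as a group of isometries of a round sphere. First I would apply Lemma~\ref{lem-gamma-eq}: the hypothesis $L(\Gamma)=\emptyset$ makes the developing map $D:\tilde M\to\partial\mathbf H^n$ a covering map, hence --- as recorded in that lemma --- a diffeomorphism. Reading the proof of Lemma~\ref{lem-gamma-eq} slightly more closely (or using Lemma~\ref{cg-lem-fixpt} directly), after conjugating in ${\rm PU}(n,1)$ one may assume that $\Gamma$ fixes $0\in\mathbf H^n$, i.e.\ $\Gamma\subset K$. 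Thus $M$ carries a $(K,\partial\mathbf H^n)$-structure with the same developing map, and the deck group $\pi_1(M)$ acts on $\tilde M\cong\partial\mathbf H^n$ through $\Gamma\subset K$.

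The second step is to recognize $(K,\partial\mathbf H^n)$ as a sub-geometry of the spherical geometry. In the ball model of $\mathbf H^n$, $\partial\mathbf H^n$ is the unit sphere of $\mathbf F^n\cong\mathbf R^{dn}$, where $d=\dim_{\mathbf R}\mathbf F$, and an element of $K$ acts there by $z\mapsto k'z$ with $k'\in{\rm U}(n)\subset{\rm O}(dn)$; hence $K$ preserves the round metric on $\partial\mathbf H^n=S^{dn-1}$. In the octonionic case $n=2$ I would instead invoke the classical embedding ${\rm Spin}(9)\hookrightarrow{\rm O}(16)$ afforded by the isotropy representation of $\mathbf H^2_{\mathbf O}$, so that $K$ again acts on $\partial\mathbf H^2_{\mathbf O}=S^{15}$ by orthogonal maps. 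It then follows that the $(K,\partial\mathbf H^n)$-structure on $M$ refines to an $\bigl({\rm O}(dn),S^{dn-1}\bigr)$-structure, i.e.\ that $M$ is a spherical manifold; moreover, since $M$ is closed and $\tilde M\cong S^{dn-1}$, the group $\Gamma\cong\pi_1(M)$ is finite and $M=S^{dn-1}/\Gamma$ is a spherical space form. For the last assertion, if $M$ is simply connected then $\Gamma=\rho(\pi_1 M)$ is trivial, so $D:M=\tilde M\to\partial\mathbf H^n$ is itself a diffeomorphism onto a sphere.

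The one point that is not purely formal is the reduction of the $(K,X)$-structure to an honest spherical structure, namely the identification of $K$ with a subgroup of the orthogonal group of the round sphere, carried out uniformly over $\mathbf F$ and in particular for $\mathbf F=\mathbf O$. Everything else is bookkeeping with developing maps and deck transformations, already supplied by Lemma~\ref{lem-gamma-eq}.
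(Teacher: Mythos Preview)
Your proposal is correct and follows essentially the same route as the paper: reduce to $\Gamma\subset K$ via Lemma~\ref{lem-gamma-eq}/\ref{cg-lem-fixpt}, then observe that $K\simeq{\rm U}(n)\subset{\rm O}(kn)$ acts orthogonally on the round sphere $\partial\mathbf H^n=S^{kn-1}$. The paper's proof is the one-line version of what you wrote; your explicit mention of ${\rm Spin}(9)\hookrightarrow{\rm O}(16)$ for $\mathbf F=\mathbf O$ and the remark that $\rho$ is injective (so $\Gamma\cong\pi_1(M)$) are welcome elaborations, not departures.
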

\begin{proof}
Because $K\simeq {\rm U}(n)\subset {\rm O}(kn)$ (where $k=1,2,4,8$ depending on the field) and $\partial \mathbf{H}^n$ is a sphere of real dimension ($kn-1$).
\end{proof}

The following proposition is also known as the “cutting lemma” in a paper of Falbel and Gusevskii \cite[th. 2.3]{Falbel}, in which they directly refer  to a paper of Kulkarni and Pinkall \cite[th. 4.2]{Kulkarni}. A proof can be found in \cite{Kulkarni}. This result will be of great use in the proofs of Fried's theorem \ref{thm-fried} and theorem \ref{thm}.

\begin{proposition}\label{prop-deuxpoints}
Let $M$ be a closed $({\rm PU}(n,1),\partial\mathbf{H}^n)$-manifold. Denote $\Gamma$ the holonomy group and $\Omega=\partial\mathbf{H}^n-D(\tilde M)$.
Suppose that $L(\Gamma)\subset \Omega$ and that $L(\Gamma)$ has at least two points.
Then $D$ is a covering map onto a connected component of $\partial \mathbf{H}^n-L(\Gamma)$.
\end{proposition}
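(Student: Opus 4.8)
The plan is to show that $D$ restricted to $\tilde M$ is a covering onto a connected component $\Omega_0$ of $\partial\mathbf H^n - L(\Gamma)$ by exploiting the completeness lemma \ref{lem-gamma-eq} / \ref{cov-lem-k} applied not to $\Gamma$ itself but to the restriction of the structure over the complement of the limit set. First I would observe that since $L(\Gamma)$ is closed and $\Gamma$-invariant, and by hypothesis $L(\Gamma)\subset\Omega = \partial\mathbf H^n - D(\tilde M)$, the developing image $D(\tilde M)$ is an open $\Gamma$-invariant subset of $\partial\mathbf H^n - L(\Gamma)$. The key point is that $D(\tilde M)$ is contained in the \emph{domain of discontinuity} $\partial\mathbf H^n - L(\Gamma)$, where $\Gamma$ acts properly discontinuously (this is the standard Kleinian-type fact: a sequence $\gamma_n p$ accumulating only on $L(\Gamma)$ forces the $\gamma_n$ to leave every compact set of the complement — see the discussion of dual pairs in the limit-set subsection). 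Hence the quotient $(\partial\mathbf H^n - L(\Gamma))/\Gamma$ is a manifold, and $D$ descends to a local diffeomorphism $\bar D: M \to (\partial\mathbf H^n - L(\Gamma))/\Gamma$.

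Next I would pass to connected components. Since $M$ is connected, $D(\tilde M)$ lands in a single $\Gamma$-orbit of connected components of $\partial\mathbf H^n - L(\Gamma)$; replacing $\Gamma$ by the stabilizer $\Gamma_0$ of one such component $\Omega_0$ and passing to the corresponding cover of $M$, we reduce to the case where $D(\tilde M)\subset\Omega_0$ and $\Gamma$ preserves $\Omega_0$. The heart of the argument is then to produce a complete Riemannian metric on $\Omega_0$ that is $\Gamma$-invariant, so that lemma \ref{lem-comp-met} applies and forces $D$ to be a covering onto $\Omega_0$; the natural candidate is (a variant of) the Kobayashi–Kulkarni metric on the domain of discontinuity, or more concretely, since $\Gamma$ acts properly discontinuously and cocompactly on the part of $\Omega_0$ that $D(\tilde M)$ covers, one averages any background metric over a fundamental domain argument. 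Concretely: $M$ closed and $\bar D$ a local diffeomorphism into the (Hausdorff, since the action is properly discontinuous) quotient orbifold means $\bar D(M)$ is open and closed in its connected component, hence $\bar D$ is surjective onto a component, and being a local diffeomorphism of a closed manifold onto a manifold it is a covering — so $D$ is a covering onto $\Omega_0$.

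I expect the main obstacle to be the \emph{proper discontinuity} step: showing carefully that $\Gamma$ acts properly discontinuously on $\partial\mathbf H^n - L(\Gamma)$ in this rank-one boundary setting (all four fields simultaneously), and that $D(\tilde M)$ cannot "spill over" between two components of the complement or accumulate on $L(\Gamma)$ from inside $\tilde M$. The hypothesis that $L(\Gamma)$ has at least two points is exactly what rules out the degenerate case of a single global fixed point (handled by Fried's theorem \ref{thm-fried} instead) and guarantees $L(\Gamma)$ is the minimal closed invariant set, so that the complement genuinely has the domain-of-discontinuity property; I would lean on the minimality statement proved just above in the limit-set subsection. The remaining steps — descending $D$ to the quotient, and invoking lemma \ref{lem-comp-met} once a complete invariant metric is in hand — are then routine, but the metric's completeness near $L(\Gamma)$ must be checked (it blows up there, which is precisely what makes it complete on $\Omega_0$), and citing \cite{Kulkarni} for this is the cleanest route.
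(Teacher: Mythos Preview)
The paper does not give its own proof of this proposition; it cites Kulkarni--Pinkall \cite{Kulkarni} (via Falbel--Gusevskii) for it. So the comparison is between your proposal and the Kulkarni--Pinkall argument.

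Your concrete line of reasoning has a genuine gap: you repeatedly use that $\Gamma$ acts properly discontinuously on $\partial\mathbf H^n - L(\Gamma)$, so that the quotient is a Hausdorff manifold/orbifold and $\bar D$ makes sense. That statement is only available once $\Gamma$ is known to be \emph{discrete} in ${\rm PU}(n,1)$, and nothing in the hypotheses guarantees discreteness. Indeed, one of the points emphasized in the paper is that discreteness of the holonomy is a \emph{consequence} of Fried's theorem, not an input; here Proposition~\ref{prop-deuxpoints} is used in the course of proving Fried's theorem, so you cannot assume it. Without discreteness the ``averaging over a fundamental domain'' construction of an invariant metric and the passage to the quotient both collapse.

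The route that does work --- and which you mention but immediately abandon for the quotient argument --- is the Kulkarni--Pinkall one: build a $\Gamma$-invariant \emph{complete} Riemannian metric on $\partial\mathbf H^n - L(\Gamma)$ directly from the geometry of $L(\Gamma)$, with no discreteness hypothesis at all. Since $L(\Gamma)$ is closed, $\Gamma$-invariant, and contains at least two points, each $x$ in the complement lies in a unique maximal round ball missing $L(\Gamma)$; the resulting ``Thurston/Kulkarni--Pinkall'' metric is invariant because it is defined purely in terms of $L(\Gamma)$, and it is complete precisely because $L(\Gamma)$ has at least two points (so the maximal balls have finite radius everywhere and the metric blows up towards $L(\Gamma)$). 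Pulling this metric back by $D$ and invoking lemma~\ref{lem-comp-met} then finishes the proof. So the fix is not to justify proper discontinuity (you cannot), but to drop that thread entirely and carry out the invariant-metric construction, or simply cite \cite{Kulkarni} as the paper does.
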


\section{Fried's theorem and similarity structures}

The goal of this section is to show the following theorem, initially stated and shown by Fried for the real case $\mathbf{F}=\mathbf{R}$ in \cite{Fried}. The complex case $\mathbf F=\mathbf C$ was proved by Miner in \cite{Miner}.  Later, a  different proof (but still in the initial ideas of Fried) of the real case was given by Matsumoto in his survey \cite{Matsumoto}. An analytic proof of the real case was done by \cite{Bridson}. The quaternionic case $\mathbf F=\mathbf H$ was addressed by Kamishima in \cite{Kamishima}.
It seems that the octonionic case $\mathbf F=\mathbf O$ has  not been proved yet.
We will simultaneously prove the theorem for all the fields $\mathbf{F}$ considered.

\begin{theorem}\label{thm-fried}
Let $M$ be a closed $({\rm Sim}(\mathcal{N}),\mathcal{N})$-manifold. If the developing map $D:\tilde M\to \mathcal{N}$ is not a cover onto $\mathcal{N}$, then the holonomy subgroup $\Gamma$ fixes a point in $\mathcal{N}$ and $D$ is in fact a covering onto the complement of this point.
\end{theorem}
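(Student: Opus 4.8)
The plan is to follow Fried's original strategy, adapted to the general boundary geometry $\mathcal{N}$ using the pseudo-norm $\|\cdot\|_\mathcal{N}$ and the abstract convexity it provides. We work with $\Omega = \mathcal{N} - D(\tilde M)$, the complement of the developing image, and the holonomy group $\Gamma \subset {\rm Sim}(\mathcal{N})$. Writing each $\gamma \in \Gamma$ as $\gamma(x) = \lambda_\gamma P_\gamma(x) + c_\gamma$ with $\lambda_\gamma \in \mathbf{R}_+$, $P_\gamma \in {\rm U}(\mathcal{N})$ and $c_\gamma \in \mathcal{N}$, we get a linear part homomorphism $\gamma \mapsto \lambda_\gamma$ into $\mathbf{R}_+$; call its image the \emph{dilation group}. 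The first dichotomy is on this dilation group: either it is trivial (all holonomy is an isometry for a Riemannian metric on $\mathcal{N}$, namely the one from the Cygan-type metric — actually here we may use that $\Gamma \subset {\rm Sim}(\mathcal{N})_0$ acting on $\mathcal{N}$ has the stabilizer structure making $M$ complete by Lemma~\ref{lem-comp-met}, contradicting the hypothesis), or there is some $\gamma_0 \in \Gamma$ with $\lambda_{\gamma_0} \neq 1$, which we may assume is a strict contraction $\lambda_{\gamma_0} < 1$ after replacing $\gamma_0$ by its inverse.

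Given such a contracting $\gamma_0$, it has a unique fixed point $p_0 \in \mathcal{N}$ (solve $\gamma_0(p_0) = p_0$: since $\lambda_{\gamma_0} P_{\gamma_0} - {\rm id}$ is invertible as a map of the nilpotent group — one checks the fixed point equation is solvable using that iterating $\gamma_0$ contracts toward a unique point, which exists by completeness of $(\mathcal{N}, d_\mathcal{N})$). The key claim is then that $p_0 \in \Omega$, i.e.\ $p_0$ is \emph{not} in the developing image. This is where convexity enters and is the main obstacle: one argues that if $p_0$ were in $D(\tilde M)$, then because $\gamma_0$ contracts a whole neighborhood of $p_0$ into itself, one could use the compactness of $M$ to pull back a sequence of fundamental-domain pieces accumulating at $p_0$ and derive that $D$ is surjective onto all of $\mathcal{N}$ (a ping-pong / shrinking-neighborhoods argument: the iterates $\gamma_0^n$ of any point of $\mathcal{N}$ converge to $p_0$, so the $\Gamma$-orbit of a neighborhood of $p_0$ covers $\mathcal{N}$), contradicting that $D$ is not a covering onto $\mathcal{N}$. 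The abstract metric $d_\mathcal{N}$ with its scaling property $\|\lambda x\| = \lambda\|x\|$ is exactly what makes this contraction argument run uniformly across all four fields.

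Once we know $\Omega \neq \emptyset$ and contains the fixed point $p_0$ of a contraction, the next step is to show $\Omega$ is a single point, namely $\{p_0\}$, and that this point is fixed by all of $\Gamma$. For the latter: $\Gamma$ preserves $\Omega$ (since $\Gamma$ preserves $D(\tilde M)$), and also preserves the closure and hence the limit set considerations; if $\Omega$ were to contain two or more points we could invoke Proposition~\ref{prop-deuxpoints} (the cutting lemma) provided $L(\Gamma) \subset \Omega$ — and indeed $L(\Gamma) \subset \Omega$ because any limit point attracts, so a neighborhood of it must avoid $D(\tilde M)$ by the same contraction argument as above, forcing it into $\Omega$; the cutting lemma would then say $D$ is a covering onto a connected component of $\mathcal{N} - L(\Gamma)$, and one checks that in the similarity setting this component is all of $\mathcal{N}$ minus one point, collapsing $L(\Gamma)$ and hence $\Omega$ to a point. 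Alternatively, and more in Fried's spirit, one shows directly that $\Omega$ is $\Gamma$-invariant, nonempty, and "convex" with respect to the geodesic structure of $\mathcal{N}$ in a way that forces it to be a point or all of $\mathcal{N}$; since $D$ is a local diffeomorphism and $M$ is closed, $\Omega$ cannot be empty (already shown) nor all of $\mathcal{N}$, and a connectedness-plus-convexity argument pins it to a single point. That point is then fixed by $\Gamma$ (a finite-order or translational part would move it, but $\Gamma$ preserves the singleton $\Omega$), and conjugating it to $\infty \in \partial\mathbf{H}^n$ we get $\Gamma \subset {\rm Sim}(\mathcal{N})_0 \cdot (\text{stabilizer of } 0)$ acting on $\mathcal{N} - \{p_0\}$, whence $D: \tilde M \to \mathcal{N} - \{p_0\}$ is a covering by Lemma~\ref{lem-n-com}.

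The step I expect to be genuinely delicate is establishing that the contraction fixed point lies in $\Omega$ rather than in the developing image — this is the heart of Fried's theorem and the place where the "convexity in $\mathcal{N}$" machinery promised in the introduction must do real work, controlling how the developed fundamental domain can accumulate near an attracting fixed point. Everything after that (identifying $\Omega$ as a single $\Gamma$-fixed point and concluding completeness of $D$ onto the complement) is comparatively formal given the cutting lemma and the compact-stabilizer completeness lemmas already in hand.
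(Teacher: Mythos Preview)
Your proposal has a genuine gap at exactly the step you flag as delicate, and the gap is not just ``hard'' but structurally wrong. The hypothesis is that $D$ is not a \emph{covering} onto $\mathcal{N}$, not that $D$ is not \emph{surjective}. Your argument that the fixed point $p_0$ of an arbitrary contraction $\gamma_0\in\Gamma$ lies in $\Omega=\mathcal{N}-D(\tilde M)$ runs: if $p_0\in D(\tilde M)$, then expanding a neighborhood of $p_0$ by $\gamma_0^{-n}$ shows $D(\tilde M)=\mathcal{N}$, contradiction. But $D(\tilde M)=\mathcal{N}$ is no contradiction at all --- a developing map can perfectly well be surjective without being a covering. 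The same error recurs when you argue $L(\Gamma)\subset\Omega$ ``because any limit point attracts, so a neighborhood of it must avoid $D(\tilde M)$''; nothing prevents $D(\tilde M)$ from containing attracting fixed points. So $\Omega$ may be empty from the outset, and your whole chain (find $p_0\in\Omega$, then cut down $\Omega$ to a point via Proposition~\ref{prop-deuxpoints}) never gets started.

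The paper's route is fundamentally different and does not rely on $\Omega$ being nonempty a priori. It works entirely inside $\tilde M$ via the \emph{visibility} structure: for each $p\in\tilde M$ one has the set $V_p\subset{\rm T}_p\tilde M$ of vectors whose geodesic exponential is defined, and a maximal visible ball $B_p$ of radius $r(p)$. The hypothesis ``$D$ not a covering'' translates (via Proposition~\ref{prop-conv-gen}) into the existence of an incomplete geodesic $t\mapsto\exp_p(tv)$, $t\in[0,1)$. Recurrence of its projection in the compact $M$ produces holonomy elements $g_{ij}$ whose images $\gamma_{ij}$ are contractions with dilation $\to 0$, rotation $\to{\rm id}$, and center $\to z+w$ (the missing endpoint in the chart). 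These contractions are not arbitrary elements of $\Gamma$: they are manufactured so that their centers lie on $\partial B_p$, i.e.\ at genuinely invisible directions. Using them and Proposition~\ref{prop-conv-n} one shows $V_p$ contains a half-space $H_p$ whose boundary (Lemma~\ref{lem-formI}) is an affine hyperplane in $\mathfrak n$; the invisible part $I$ of $\partial H_p$ is then shown to be independent of $p$ and affine, so $D(\tilde M)\subset\mathcal{N}-I$. Only at this point does the cutting lemma enter, together with a discreteness argument (and a lift to handle the codimension-two case) to force $I$ to a single point. None of this is captured by ``pick a contraction and look at its fixed point''.
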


The different ideas of the proof  come from  \cite{Fried}, \cite{Miner} and \cite{Matsumoto}. The ideas about convexity of Fried and Miner (also in Carrière's work \cite{Carriere}) forged the necessity of the second section  and the ideas for the first arguments of the 
 theorem's proof. Matsumoto's  ideas helped to find the last arguments.

\subsection{A shortcut: discrete holonomy and autosimilarity}

A consequence of Fried's theorem is that \emph{for any similarity structure on a closed manifold, the holonomy is discrete}. The converse also holds.

\begin{proposition}\label{prop-j-group}
Suppose that $\Gamma\subset{\rm Sim}(\mathcal{N})$ is a discrete subgroup. Then either $L(\Gamma)=\emptyset$, or $L(\Gamma)=\{\infty\}$ or $L(\Gamma)=\{\infty,a\}$ for some $a\in \mathcal{N}$.
\end{proposition}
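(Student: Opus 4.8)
The plan is to analyze the limit set $L(\Gamma) \subset \partial \mathbf{H}^n$ of a discrete subgroup $\Gamma \subset {\rm Sim}(\mathcal{N})$, using the fact that every element of ${\rm Sim}(\mathcal{N})$ fixes $\infty \in \partial \mathbf{H}^n$, so $\infty$ is a global fixed point and $L(\Gamma) \subset \mathcal{N} \cup \{\infty\}$ is $\Gamma$-invariant. The key observation is that an element $f \in {\rm Sim}(\mathcal{N})$ written as $f(x) = \lambda P(x) + c$ with $\lambda \in \mathbf{R}_+$, $P \in {\rm U}(\mathcal{N})$, $c \in \mathcal{N}$, is entirely governed by its similarity ratio $\lambda$: if $\lambda \neq 1$ then $f$ has a unique fixed point in $\mathcal{N}$ (a loxodromic-type element with dual pair in $\partial \mathbf{H}^n$ consisting of $\infty$ and that point), while if $\lambda = 1$ then $f$ is an isometry of $(\mathcal{N}, d_\mathcal{N})$ fixing only $\infty$ in $\partial \mathbf{H}^n$ unless it also fixes points of $\mathcal{N}$.

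First I would dispose of the case where $\Gamma$ consists entirely of isometries (all $\lambda = 1$), i.e.\ $\Gamma \subset {\rm U}(\mathcal{N}) \ltimes \mathcal{N}$ acting by $d_\mathcal{N}$-isometries on $\mathcal{N}$. A discrete group of isometries of the proper metric space $(\mathcal{N}, d_\mathcal{N})$ either has a bounded orbit — in which case, since the stabilizer of a point is compact and $\Gamma$ is discrete, $\Gamma$ is finite and $L(\Gamma) = \emptyset$ — or it has an unbounded orbit, in which case every orbit is unbounded and $d_\mathcal{N}(0, \gamma_n \cdot 0) \to \infty$ along some sequence; translating to $\partial \mathbf{H}^n$ this forces $\gamma_n \cdot p \to \infty$ for $p \in \mathbf{H}^n$, so $L(\Gamma) = \{\infty\}$ (it cannot contain another point of $\mathcal{N}$ since that would, by minimality and invariance under an unbounded isometry group, be impossible — an isometry group cannot fix a point of $\mathcal{N}$ while having unbounded orbits, and the dual-pair structure then pins the limit set to $\{\infty\}$).

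Next, suppose some $\gamma_0 \in \Gamma$ has ratio $\lambda \neq 1$; without loss of generality $\lambda < 1$ (replace $\gamma_0$ by $\gamma_0^{-1}$). Then $\gamma_0$ fixes a unique point $a \in \mathcal{N}$, and conjugating by a translation we may assume $a = 0$, so $\gamma_0(x) = \lambda P_0(x)$. For $p \in \mathbf{H}^n$ one checks $\gamma_0^n \cdot p \to 0$ and $\gamma_0^{-n} \cdot p \to \infty$, so $\{0, \infty\} \subset L(\Gamma)$ and $(0,\infty)$ is a dual pair for $\gamma_0$. The heart of the argument is to show $L(\Gamma)$ contains nothing else: I would argue that any $x \in L(\Gamma)$ other than $0$ and $\infty$ would have to be a limit of $\gamma_n \cdot p$ for a sequence with ratios $\lambda_n$; a careful bookkeeping of the north–south dynamics (using that $\gamma_0$-conjugation contracts towards $0$ and that all elements fix $\infty$) shows the only possible accumulation points on $\partial\mathbf{H}^n$ are $0$ and $\infty$, because the "horizontal" displacement $c_n$ of $\gamma_n$ relative to its contraction rate is controlled — more precisely, an element with small ratio $\lambda_n$ has its near-fixed point close to $-\lambda_n^{-1} c_n$ rescaled, and combining two such elements shows $\Gamma$ cannot move the dual pair $\{0,\infty\}$, hence $\Gamma$ fixes $\{0,\infty\}$ setwise. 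Finally, if $\Gamma$ does not fix $0$ pointwise there is $\gamma_1 \in \Gamma$ swapping $0$ and $\infty$; but $\gamma_1$ fixes $\infty$ (being in ${\rm Sim}(\mathcal{N})$), a contradiction. So $\Gamma$ fixes $0$, giving $L(\Gamma) = \{\infty, 0\}$ after undoing the conjugation, i.e.\ $L(\Gamma) = \{\infty, a\}$.

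The main obstacle I anticipate is the middle step: ruling out a third limit point and showing $\Gamma$ preserves the dual pair $\{0,\infty\}$. Naively a product of a strong contraction toward $0$ with a large translation could create a new attracting fixed point far away; the resolution is that discreteness forbids arbitrarily small ratios paired with arbitrarily large translations in an unbalanced way — one must quantify, via the norm $\|\cdot\|_\mathcal{N}$ and its scaling $\|\lambda x\|_\mathcal{N} = \lambda\|x\|_\mathcal{N}$, that the fixed point of $\gamma \gamma_0^n$ stays near $0$ for large $n$, and then a standard ping-pong / accumulation argument using the compactness of $\partial\mathbf{H}^n$ and discreteness of $\Gamma$ forces the limit set into $\{0,\infty\}$. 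This is exactly the kind of elementary-but-delicate dynamical estimate that the pseudo-norm on $\mathcal{N}$ was set up to handle.
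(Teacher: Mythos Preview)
Your overall architecture is right --- dispose of the purely isometric case, then in the presence of a genuine contraction show that all contractions share a common fixed point --- but the crucial middle step is a genuine gap, and the mechanisms you gesture at do not close it. Products of the form $\gamma\gamma_0^{n}$ have dilatation ratio tending to $0$, so they run off to infinity in ${\rm Sim}(\mathcal N)$ and yield no contradiction with discreteness; and a ping-pong argument points the wrong way (it would exhibit a free, hence discrete, subgroup rather than an accumulation). The sentence ``discreteness forbids arbitrarily small ratios paired with arbitrarily large translations in an unbalanced way'' is where the proof has to happen, and nothing you wrote makes it happen.

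The paper's device is a conjugation trick that keeps the dilatation ratio \emph{fixed}. Suppose $f,g\in\Gamma$ have ratios $\lambda,\mu\neq 1$ and distinct attracting fixed points $a\neq b$ in $\mathcal N$. Write $f(x)=\lambda P(x)+c$, $g(x)=\mu Q(x)+d$, set $h=gfg^{-1}$ (which has ratio $\lambda$ and fixed point $g(a)\neq a$), and form
\[
h_n \;=\; f^{n}\,h\,f^{-n}\;=\;\lambda\,\bigl(P^{n}QPQ^{-1}P^{-n}\bigr)(x)+c_n.
\]
The ratio of every $h_n$ is exactly $\lambda$; the rotational part $A_n=P^{n}QPQ^{-1}P^{-n}$ lies in the compact group ${\rm U}(\mathcal N)$, so a subsequence converges; and the fixed point of $h_n$ is $f^{n}(g(a))\to a$, which forces $c_n$ to converge along that subsequence as well. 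Since the fixed points $f^{n}(g(a))$ are pairwise distinct (because $g(a)\neq a$), the $h_n$ are pairwise distinct, yet they accumulate on an element of ${\rm Sim}(\mathcal N)$ with ratio $\lambda\neq 1$. This contradicts discreteness. That is the entire content of the hard step: conjugate to freeze the ratio, and let compactness of the rotation group supply the convergent subsequence.

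Once all contractions fix the same $a$, your remaining steps are fine: any $g\in\Gamma$ must fix $a$ (else $gfg^{-1}$ is a contraction with fixed point $g(a)\neq a$), so $\Gamma$ fixes both $a$ and $\infty$ and $L(\Gamma)\subset\{a,\infty\}$. Your separate treatment of the isometric case is correct, though the paper simply folds it into the observation that $L(\Gamma)\not\subset\{\infty\}$ forces the existence of elements with ratio $\neq 1$.
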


This can be compared with Matsumoto \cite[lemma 4.20]{Matsumoto}.

\begin{proof}
Suppose that $L(\Gamma)$ is neither $\emptyset$ nor $\{\infty\}$. Since $\Gamma\subset {\rm Sim}(\mathcal{N})$, we have $\infty\in L(\Gamma)$. Indeed, if not, then $\Gamma\subset M$ but $L(M)=\emptyset$.

Now, let $f,g$ be such that $f^n(x)\to a$ and $g^n(x)\to b$. This hypothesis can be made because if $L(\Gamma)\neq \{\infty\}$ then there exist $f,g$ with dilatation factors different from $1$. If $a=b$ for all choices $f,g$ then $\Gamma$ fixes $a$ and it follows that $L(\Gamma)=\{\infty\}$ or $L(\Gamma)=\{a,\infty\}$. So by absurd, suppose $a\neq b$.

Denote $f(x)=\lambda P(x)+c$ and $g(x) = \mu Q(x)+d$.
Take $h=g\circ f \circ g^{-1}$, we see that $h$ fixes $g(a)$. Set also the sequence $h_n = f^n\circ h \circ f^{-n}$. The fixed point of $h_n$ is $f^n(g(a))$ tending to $a$.
\begin{align}
g\circ f\circ g^{-1}(x) &= \mu Q\left(\lambda P(\mu^{-1}Q^{-1}(x) +d' ) + c\right) + d\nonumber \\
&= \lambda QPQ^{-1}(x) + e\\
f^n\circ h\circ f^{-n}(x) &= \lambda P^nQPQ^{-1}P^{-n} (x) + c_n
\end{align}
Denote $A_n=P^nQPQ^{-1}P^{-n}$. Since $A_n\in M$ which is compact, we can extract a subsequence converging to $A$. Now, the constant $c_n$ must tend to $c$ since the fixed point tends to $a$. Hence, $h_n(x)$ accumulates to $\lambda A(x) + c$, contradicting the discreteness of $\Gamma$.
\end{proof}

From this property it is not difficult to retrieve Fried's theorem fixed point property. If $M$ is not complete, then $L(\Gamma)$ is neither $\emptyset$ nor $\{\infty\}$. Indeed, there must be an element of $\Gamma$ with dilatation factor different from $1$, giving a limit point in $\mathcal N$ (otherwise the structure is complete, since there would exist an invariant riemannian metric). Therefore by discreteness, $L(\Gamma)$ is $\{a,\infty\}$ and $a$ must be fixed.
For closed manifolds, it is then possible to retrieve the full Fried's theorem with the fixed point property.

Of course, the hypothesis that $\Gamma$ is discrete is highly non trivial, and this is why the  proof of Fried's theorem  is important. However, the author wishes to emphasize the fact that it remains true for any discrete subgroup (even if the manifold considered is not compact). In particular, it is possible to prove that any group with the property that \emph{if a point of $L(\Gamma)$ is totally fixed by $\Gamma$ then $L(\Gamma)$ is elementary (i.e. has at most two points)} enables to prove that $L(\Gamma)$ is autosimilar when it is not elementary. For take $p\in L(\Gamma)$ and $U$ an open neighborhood of $p$. The complement $\partial\mathbf{H}^n- \Gamma\cdot U$ is closed and invariant. Hence it must be at most a single point totally fixed by $\Gamma$ since $L(\Gamma)$ contains $p$ and would be contained in this complement if there were more than one point. Now $L(\Gamma)$  can in fact not have a totally fixed point in $\partial\mathbf{H}^n- \Gamma\cdot U$ since we supposed that $L(\Gamma)$ is infinite and verifies the property emphazed before. Therefore $\Gamma\cdot U$ covers $L(\Gamma)$, and this shows the autosimilarity property.

\subsection{The geometry of $\mathcal{N}$}

Some facts about the geometry of $\mathcal{N}$ will be needed. In the real case, $\mathcal{N}$ is the Euclidean space endowed with its similarities. The advantage is that the Euclidean space $\mathbf{R}^n$ is flat and we are allowed to state $\exp_x(v)=x+v$.

It is still true in general.  First, $\mathcal N$ is a 2-nilpotent Lie group. It follows that the Lie algebra $\mathfrak n$ of $\mathcal{N}$ is 2-nilpotent and the exponential map is a diffeomorphism between $\mathfrak n$ and $\mathcal{N}$. The real vector space $\mathfrak n$ is is to be thought as a global coordinate system of $\mathcal N$.

Geodesics of $\mathcal N$ can be described in explicit terms. 
Let $\omega$ be the Maurer-Cartan form of $\mathcal N$, \emph{i.e.} $\omega: {\rm T}\mathcal N \to \mathfrak n$ is a $1$-form constant on left-invariant vector fields. Geodesics $\gamma:I\to \mathcal N$ are smooth curves such that $\gamma^*\omega$ is constant.
Geodesics from $0\in\mathcal N$ are given by $\exp(tv)$ for any $v\in\mathfrak n$. If $\exp(p)\in \mathcal N$ is any other point then the geodesics issued from $p$ are given by left multiplication: $\exp(p)\exp(tv)$.

Note that in the coordinates of $\mathfrak n$ given by the exponential, every geodesic is a straight line: from $0$ it is clear since geodesics are given by $\exp(tv)$ corresponding to the coordinates $tv$. If $\exp(p)\in\mathcal N$ then the Campbell-Hausdorff formula gives
\begin{equation}
\exp(p)\exp(tv) = \exp\left(p+tv+\frac 12 [p,tv]\right)
\end{equation}
hence coordinates of geodesics issued from $p$ are given by $p+tv+\frac 12[p,tv]$ which is again a straight line. This is a fact restricted to the $2$-nilpotent groups, in general the Campell-Hausdorff formula gives a polynomial which is not affine.

We will denote by $\exp_{\exp(p)}(tv)$ or $\exp_p(tv)$ the product $\exp(p)\exp(tv)$ or $p\exp(tv)$ depending whereas $p\in\mathfrak n$ or $p\in\mathcal N$. Since geodesics are given by left translations, we can still write $\exp_x(v) = x+_\mathcal{N} v$ as in the Euclidean case.
We will be careful not  to write $\exp_x(tv)=x+tv$ since $tv$ could indicate the dilatation by a factor $t$ as in the group $\mathcal{N}$ and not linearly as in the tangent space $\mathfrak n$.

Similarity transformations preserve the geodesic structure: if $\rho\in{\rm Sim}(\mathcal N)$ and if $\gamma$ is  geodesic, then $\rho\gamma$ is again geodesic. This can be checked manually since the group ${\rm Sim}(\mathcal N)$ is well known.

The space $\mathcal N$ is locally convex. The open balls that will be defined in the next section provide examples of arbitrary small open convex subsets.

\par\vspace{\baselineskip}\noindent
If $K$ is any manifold with a similarity structure, then we can pullback the geodesic structure on $K$. 
To be more specific, a curve $\gamma:I\to K$ is a geodesic if and only if $D\circ \gamma$ is a geodesic of $\mathcal N$.
This construction implies the equivariance
\begin{equation}
D(\exp_x(v))  = \exp_{D(x)}({\rm d} D \cdot v) = D(x) + {\rm d} D(v). 
\end{equation}
The exponential map at $x$ in $K$ is defined on an open subset. By definition, we will say that this is  the \emph{visible} open subset, denoted by $V_x\subset {\rm T}_xK$. This open subset is non empty since $D$ is a local diffeomorphism.

We are now interested in the case $K=\tilde M$. The following construction enables to get  arguments about $V_x$.
Let $p\in \tilde M$ and $g\in \pi_1(M)$. Suppose that there exists $v$ such that $\exp_p(v) = g\cdot p$.
Since $V_p$ is an open neighborhood of $v$, and since $\exp$ is a local diffeomorphism, we can reduce $V_p$ to a neighborhood $W_p$ of $v$ on which $\exp$ is a diffeomorphism. Therefore, there exists an open neighborhood $U_p$ of the origin in ${\rm T}_p\tilde M$ such that the application $G$ defined on $U_p$ by 
\begin{equation}
G(u) = (\exp_p^{-1}\circ g\circ \exp_p)(u)
\end{equation}
is well defined and verifies $G(0)=v$ and $G(U_p)= W_p$.
Remark that the maps $g$ and $G$ are equivariant:
\begin{equation}
\exp_p(G(u))=g\cdot \exp_p(u).
\end{equation}
We are now interested to extend $G$ to the full tangent space ${\rm T}_p\tilde M$. To do so, we need to treat the difficulty of the exponential of a non-visible vector. This is done by looking through the developing map.

As we did before, we can set $\exp_x(w)=x+w$ for $x\in \tilde M$ and $w\in \mathfrak n$ by looking through the developing map ($x\in \tilde M$ is send to $z\in\mathcal N$, the tangent spaces are identified and any tangent space in $\mathcal N$ is identified to $\mathfrak n$ by the parallel transport). For convenience we will denote by $p+V_p$ the image $\exp_p(V_p)$ of the exponential of all visible vectors.
\begin{lemma}\label{lem-dev-exp}
For any $w_1,w_2\in{\rm T}_p\tilde M$, if $D(p) + {\rm d}D(w_1) = D(p) + {\rm d}D(w_2)$ then $w_1=w_2$. In particular $D$ restricted to $p+V_p$ is injective since $D(p+v)=D(p)+{\rm d}D(v)$.
\end{lemma}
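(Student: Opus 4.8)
The plan is to unwind what ``$D(p)+{\rm d}D(w)$'' means and to observe that, once unwound, it is a chain of bijections applied to $w$, so that injectivity is automatic; the only work is bookkeeping the identifications between ${\rm T}_p\tilde M$, ${\rm T}_{D(p)}\mathcal N$ and $\mathfrak n$. The one ambient fact I would record is that for every $y\in\mathcal N$ the exponential at $y$, $\exp_y\colon{\rm T}_y\mathcal N\to\mathcal N$, is a \emph{global} bijection: with the conventions fixed above it factors as the composition of the Maurer--Cartan trivialization $\omega_y\colon{\rm T}_y\mathcal N\xrightarrow{\ \sim\ }\mathfrak n$, the Lie-group exponential $\exp\colon\mathfrak n\to\mathcal N$ — a diffeomorphism since $\mathcal N$ is simply connected and $2$-nilpotent — and the left translation $L_y\colon z\mapsto y\cdot z$; each factor is a bijection. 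Crucially this holds on the \emph{whole} tangent space, with no visibility restriction, which is exactly why one passes through $D$ rather than through $\exp_p$ on $\tilde M$, whose domain $V_p$ may be a proper open cone.

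Then the argument is immediate. Given $w_1,w_2\in{\rm T}_p\tilde M$ with $D(p)+{\rm d}D(w_1)=D(p)+{\rm d}D(w_2)$, that is $\exp_{D(p)}({\rm d}D(w_1))=\exp_{D(p)}({\rm d}D(w_2))$, injectivity of $\exp_{D(p)}$ yields ${\rm d}D(w_1)={\rm d}D(w_2)$ in ${\rm T}_{D(p)}\mathcal N$; since $D$ is a local diffeomorphism, ${\rm d}D_p$ is a linear isomorphism, hence injective, and so $w_1=w_2$. For the last assertion, take $v_1,v_2\in V_p$ with $D(p+v_1)=D(p+v_2)$; the equivariance identity $D(\exp_p(v))=\exp_{D(p)}({\rm d}D(v))=D(p)+{\rm d}D(v)$ recorded earlier turns this into $D(p)+{\rm d}D(v_1)=D(p)+{\rm d}D(v_2)$, whence $v_1=v_2$ by what was just proved, and therefore $p+v_1=\exp_p(v_1)=\exp_p(v_2)=p+v_2$; so $D$ is injective on $p+V_p$.

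There is no genuine obstacle here: the statement is a formal consequence of ``$\exp_{D(p)}$ is a bijection'' and ``${\rm d}D_p$ is a bijection''. The only thing deserving care is being explicit about the identifications, and stressing that $\exp_{D(p)}$ — unlike $\exp_p$ on $\tilde M$ — is defined and injective on \emph{all} tangent vectors; this is precisely what makes the lemma useful for handling non-visible vectors later on.
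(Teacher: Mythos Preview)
Your proof is correct and follows essentially the same route as the paper: the paper invokes ``unicity of the geodesics in $\mathcal N$'' to deduce ${\rm d}D(w_1)={\rm d}D(w_2)$, which is exactly your statement that $\exp_{D(p)}$ is injective, and then concludes via the local diffeomorphism property of $D$. You are simply more explicit about why the exponential is globally injective (factoring through the nilpotent Lie-group exponential) and you spell out the ``in particular'' clause, which the paper leaves implicit.
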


\begin{proof}
If $D(p) + {\rm d}D(w_1) = D(p) + {\rm d}D(w_2)$ then by unicity of the geodesics in $\mathcal N$ we deduce ${\rm d}D(w_1) = {\rm d}D(w_2)$. Since $D$ is  a local diffeomorphism, this implies $w_1=w_2$.
\end{proof}

In $\mathcal N$ every exponential is well defined. By the preceding lemma, what happens in $V_p$ is not different from what happens in the developing map.
This is why we define 
\begin{equation}
G(u) = {\rm d}D^{-1}_{p}\circ \exp_{D(p)}^{-1} \circ \rho(g)\circ \exp_{D(p)} \circ {\rm d}D_p(u).
\end{equation}
With $v$ as before, it remains true that $G(0)=v$. 
Again, for any $w\in V_p\cap G^{-1}(V_p)$
\begin{equation}
\exp_p(G(w)) = g\cdot \exp_p(w).\label{eq-comp-G}
\end{equation}
For, recall that by the lemma the developing map  is injective on $p+V_p$. We conclude by the following computation.
\begin{align}
D(g\cdot \exp_p(w)) &= \rho(g)\circ D(\exp_p(w))\\
D(\exp_p(G(w))) &= \exp_{D(p)}({\rm d}D\cdot G(w))\\
&= \rho(g)\circ \exp_{D(p)}({\rm d}D_p\cdot w)\\
&= \rho(g)\circ D(\exp_p(w))
\end{align}

The following proposition is crucial to the study. We will recall the notations.
\begin{proposition}\label{prop-conv-n}
Let $p\in \tilde M$ and $v\in V_p$ such that there exists $g\in\pi_1(M)$ verifying $g\cdot p = p+v$. The map $G$ exponentially equivariant with $g$ is defined by
\begin{equation}
G(u) = {\rm d}D^{-1}_{p}\circ \exp_{D(p)}^{-1} \circ \rho(g)\circ \exp_{D(p)} \circ {\rm d}D_p(u).
\end{equation}

Suppose that $U$ is a convex  subset  of $V_p$ and contains $v=G(0)$. For any $w\in{\rm T}_p\tilde M$, if $G(w)\in U$ then $w\in V_p$.
\end{proposition}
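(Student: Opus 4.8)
The plan is to argue by contradiction on a connectedness/openness-closedness principle applied along a geodesic segment. Suppose $G(w) \in U$ but $w \notin V_p$. Since $v = G(0) \in U \subset V_p$ and $0 \in V_p$, consider the segment $t \mapsto tw$ for $t \in [0,1]$ in $\mathrm{T}_p\tilde M$, and let $t_0$ be the supremum of those $t$ for which $t'w \in V_p$ for all $t' \le t$. Since $V_p$ is open and contains $0$, we have $t_0 > 0$; since $w \notin V_p$ (and $V_p$, being a visible set, is "star-shaped from the origin" in the sense that $\exp_p$ is defined along radial geodesics whenever the endpoint is visible — this uses that geodesics are straight lines in the $\mathfrak{n}$-coordinates), we get $t_0 \le 1$ and $t_0 w \in \partial V_p$, i.e.\ $t_0 w \notin V_p$.

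The key step is to transfer this to the $U$-side using the equivariance of $G$. On the open set $V_p \cap G^{-1}(V_p)$ we have $\exp_p(G(w)) = g\cdot\exp_p(w)$ by \eqref{eq-comp-G}, and $G$ is an affine-type map conjugated through $\mathrm{d}D_p$ to the similarity $\rho(g)$ acting on $\mathcal{N}$; in particular $G$ maps geodesics (straight lines in $\mathfrak n$-coordinates) to geodesics. So $t \mapsto G(tw)$ is, in the relevant coordinates, a straight segment from $G(0) = v$ to $G(w)$, both of which lie in the convex set $U$. Hence $G(tw) \in U \subset V_p$ for all $t \in [0,1]$. Now I would run the standard "visibility propagates along geodesics" argument: for $t < t_0$ we have $tw \in V_p$ and $G(tw) \in V_p$, so $\exp_p(G(tw)) = g \cdot \exp_p(tw)$ is a genuine point of $\tilde M$, and $\mathrm{d}D_p$ carries the visible vector $G(tw)$ to a vector on which $\exp_{D(p)}$ is the local inverse of $D$; letting $t \to t_0$, the points $g\cdot\exp_p(tw) = \exp_p(G(tw))$ converge in $\tilde M$ (because $G(t_0w) \in V_p$, so the right-hand side extends continuously), and therefore $\exp_p(tw) = g^{-1}\cdot\exp_p(G(tw))$ also converges; by the defining property of the exponential at $p$ (an endpoint of a geodesic segment all of whose interior is reached and whose limit exists in $\tilde M$ is itself reached), this forces $t_0 w \in V_p$, contradicting $t_0 w \in \partial V_p$.

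Thus the supremum $t_0$ cannot be $\le 1$, so $w \in V_p$, which is the claim.

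The main obstacle I anticipate is making precise the two "soft" facts used above: first, that the visible set $V_p$ is closed under taking limits of radial geodesic segments lying inside it (so that $t_0 w$ is itself visible once all $tw$ with $t < t_0$ are), which is exactly where completeness-type arguments for the exponential map enter and must be stated carefully via the developing map and Lemma \ref{lem-dev-exp}; and second, that $G$ genuinely maps the straight segment $[0,w]$ to a straight segment inside $U$ — this requires knowing $G$ is defined and geodesic-preserving on all of $\mathrm{T}_p\tilde M$ (not just on $V_p \cap G^{-1}(V_p)$), which the excerpt arranges precisely by defining $G$ through $\mathrm dD_p$ and $\rho(g)$ on $\mathcal N$, where every exponential exists. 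Once those are in hand, the argument is the familiar "open-and-closed along a geodesic" trick, and convexity of $U$ is used exactly once, to guarantee the whole image segment stays in $V_p$.
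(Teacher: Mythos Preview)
Your proposal is correct and follows essentially the same route as the paper's proof. Both arguments observe that $t\mapsto G(tw)$ is a geodesic segment joining $G(0)=v$ and $G(w)$ inside the convex set $U\subset V_p$, so that $\exp_p(G(tw))$ is defined for all $t\in[0,1]$; then the equivariance $\exp_p(G(tw))=g\cdot\exp_p(tw)$ is used to propagate visibility of $tw$ from small $t$ to $t=1$ via a connectedness (open--closed) argument along the interval. Your supremum/contradiction phrasing and your explicit justification that a geodesic with a limit point in $\tilde M$ extends (which the paper compresses into ``since the left member is always defined $[0,\epsilon[$ must be closed'') make transparent exactly the step the paper leaves implicit, but the idea is identical.
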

\begin{proof}
Suppose that $U$ verifies the hypotheses. For $t<\epsilon$ small enough $tw$ is visible and $G(tw)$ is close to $v$ and is hence  visible. Therefore $tw\in V_p\cap G^{-1}(V_p)$. We know by equation \eqref{eq-comp-G} that
\begin{equation}
\exp_p(G(tw)) = g\cdot \exp_p(tw).
\end{equation}
Since $\rho(g)$ transforms geodesics into geodesics, it transforms $tw$ onto a geodesic from $G(0)$ to $G(w)$.

Since $G(0),G(w)\in U$ by assumption, the left member is well defined for any $t\in [0,1]$ because $U$ is convex. The right member is defined for $t$ in $[0,\epsilon[$. But since the left member is always defined $[0,\epsilon[$ must be closed in $[0,1]$, therefore equal to $[0,1]$. It follows that $w$ is visible.
\end{proof}

\par\vspace{\baselineskip}\noindent
We will now give the first argument of Fried's theorem's proof. The hypothesis to keep in mind is that the similarity structure does not give a covering onto for $D$. The following ideas of convexity properties can be related to the work of Carrière in \cite{Carriere}. Some parts are exposed in \cite{Miner}.

Let $C$ be an open subset of $\tilde M$. We will say that $C$ is a \emph{convex} subset if $D|_C$ is a diffeomorphism with convex image in $\mathcal{N}$. Convexity in $\mathcal{N}$ is the property of containing every geodesic segment.

\begin{lemma}\label{lem-conv-g}
Let $p\in \tilde M$ and $0\in C\subset V_p$ such that $\exp_p(C)$ is convex. Let $g\in \pi_1(M)$. Then $g\exp_p(C)$ is a convex subset containing $gp$.
\end{lemma}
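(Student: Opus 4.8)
The plan is to reduce the statement to Proposition~\ref{prop-conv-n} together with the equivariance relation \eqref{eq-comp-G}, so that the only thing left to establish is that the image $g\exp_p(C)$ is again convex in $\mathcal N$, i.e.\ that it contains every geodesic segment between two of its points. Recall that $\exp_p(C) = p + C$ is, by hypothesis, diffeomorphic via $D$ to a convex subset of $\mathcal N$, and that $C$ contains $0$, so $gp = g\cdot\exp_p(0) \in g\exp_p(C)$; this takes care of the "containing $gp$" part immediately.

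For the convexity I would argue as follows. Write $G$ for the map of Proposition~\ref{prop-conv-n} associated with $g$ (after possibly choosing $v\in C$ with $\exp_p(v)=gp$, which exists since $gp$ is in the visible set $p+V_p$; if instead one only knows $0\in C\subset V_p$ with $gp$ not a priori visible, one still has $g$ acting by deck transformation and can transport the picture through $D$). The key point is that on $C$ one has $D(g\cdot\exp_p(w)) = \rho(g)\cdot D(\exp_p(w)) = \rho(g)\big(D(p)+{\rm d}D\cdot w\big)$, so $D$ carries $g\exp_p(C)$ onto $\rho(g)\big(D(p+C)\big)$. Since $\rho(g)\in{\rm Sim}(\mathcal N)$ and $D(p+C)$ is convex, and since similarity transformations of $\mathcal N$ preserve the geodesic structure (as recalled in Section on the geometry of $\mathcal N$), the image $\rho(g)\big(D(p+C)\big)$ is again convex in $\mathcal N$. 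Moreover $D$ restricted to $g\exp_p(C)$ is injective: if $D(g\cdot\exp_p(w_1))=D(g\cdot\exp_p(w_2))$ then applying $\rho(g)^{-1}$ gives $D(\exp_p(w_1))=D(\exp_p(w_2))$, hence $w_1=w_2$ by Lemma~\ref{lem-dev-exp}, hence the two points of $g\exp_p(C)$ agree. Thus $D$ is a diffeomorphism from $g\exp_p(C)$ onto a convex subset of $\mathcal N$, which is exactly the definition of $g\exp_p(C)$ being a convex subset of $\tilde M$.

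The one genuinely delicate point — and the main obstacle — is making sure that $g\exp_p(C)$ really is an \emph{open} subset on which $D$ is a local diffeomorphism, i.e.\ that applying the deck transformation $g$ does not leave the domain where the developing map behaves nicely. This is harmless because $g:\tilde M\to\tilde M$ is itself a diffeomorphism and $D$ is a global local diffeomorphism on all of $\tilde M$ satisfying $D\circ g = \rho(g)\circ D$; so openness is automatic and the local-diffeomorphism property of $D|_{g\exp_p(C)}$ follows from that of $D$ together with the injectivity just proved. I would state this carefully rather than grind through it, since it is precisely the kind of "the space and the tangent space are the same" identification that the paper has been setting up.

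It is worth noting that this lemma is the natural companion to Proposition~\ref{prop-conv-n}: the proposition says a convex neighborhood of $v=G(0)$ inside $V_p$ forces visibility of preimages under $G$, and the present lemma guarantees that translating a convex piece by an element of $\pi_1(M)$ produces a new convex piece to which that mechanism can be applied — this is what will let one propagate convexity across $\tilde M$ in the proof of Theorem~\ref{thm-fried}.
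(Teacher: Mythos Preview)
Your argument is correct and follows the same idea as the paper: convexity of $g\exp_p(C)$ comes from the equivariance $D\circ g=\rho(g)\circ D$ together with the fact that $\rho(g)\in{\rm Sim}(\mathcal N)$ sends geodesics to geodesics, so the convex image $D(\exp_p(C))$ is carried to another convex set and $D$ remains injective there. The paper compresses all of this into a single sentence (``$g$ transforms geodesics into geodesics''), whereas you spell out the injectivity and openness checks; your opening reference to Proposition~\ref{prop-conv-n} and the map $G$ is a detour that is never actually used and can be dropped.
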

\begin{proof}
Since $g$ transforms geodesics into geodesics, it sends a convex to a convex.
\end{proof}

Remark that by the following proposition, we furthermore have that $g\exp_p(C)\subset gp + V_{gp}$.

\begin{proposition}\label{prop-conv-gen}
We have the following properties.
\begin{enumerate}
\item If $p\in C$ with $C\subset \tilde M$ convex, then $C\subset p+V_p$.
\item If $C_1,C_2$ are convex in $\tilde M$ with a non empty intersection, then $D|_{C_1\cup C_2}$ is injective.
\item If $\tilde M$ contains $p$ such that $p+V_p$ is convex, then $p+V_p=\tilde M$.
\end{enumerate}
\end{proposition}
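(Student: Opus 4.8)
The plan is to treat the three items in order, each building on the previous one. For item (1), I would use Proposition \ref{prop-conv-n} in its simplest incarnation: the point $p \in C$ corresponds to the trivial identity "$g = e$" situation, so that the relevant exponentially equivariant map is the identity and $V_p \cap C$ is open, closed and non-empty inside $C$; more directly, since $D|_C$ is a diffeomorphism onto a convex set and $D(p) \in D(C)$, for any $q \in C$ the geodesic segment in $\mathcal N$ from $D(p)$ to $D(q)$ lies in $D(C)$ and pulls back through $D|_C^{-1}$ to a geodesic in $\tilde M$ from $p$ to $q$; this geodesic realizes $q$ as $\exp_p(w)$ for a vector $w$ all of whose initial sub-segments are visible, whence $w \in V_p$ and $q \in p + V_p$. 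So $C \subset p + V_p$, and moreover $D$ restricted to $C$ agrees with $D$ restricted to the corresponding piece of $p + V_p$, which is injective by Lemma \ref{lem-dev-exp}.

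For item (2), let $q \in C_1 \cap C_2$. By item (1) applied to each convex set, both $C_1$ and $C_2$ are contained in $q + V_q$ (using $q$ as the base point). By Lemma \ref{lem-dev-exp}, $D$ is injective on $q + V_q$, hence a fortiori on $C_1 \cup C_2 \subset q + V_q$. This is the short step; its only subtlety is noticing that item (1) can be applied with base point $q$ rather than with the base point originally used to present $C_1$ or $C_2$ as exponential images, but that is precisely what item (1) as stated allows.

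For item (3), suppose $p \in \tilde M$ is such that $p + V_p$ is convex. Then $p + V_p$ is an open subset of $\tilde M$; I claim it is also closed, which by connectedness of $\tilde M$ forces $p + V_p = \tilde M$. To see closedness, take $x \in \overline{p + V_p}$ and pick a small convex ball $B$ around $x$ (such exist since $\mathcal N$ is locally convex and $D$ is a local diffeomorphism, so we may shrink to a convex chart around $x$); then $B \cap (p + V_p) \neq \emptyset$. Applying item (2) to the convex sets $B$ and $p + V_p$, the map $D$ is injective on $B \cup (p + V_p)$, and by item (1) with base point $p$ the set $B$ is contained in $p + V_p$ — here I use that $B$ has a point $y$ in common with $p + V_p$, so the geodesic from $p$ to $y$ extends inside the convex set $D(p + V_p) \cup D(B)$... more carefully: pick $y \in B \cap (p+V_p)$; then $y = \exp_p(v)$ with $v \in V_p$, and for any $z \in B$ the geodesic segment in $\mathcal N$ from $D(y)$ to $D(z)$ lies in the convex set $D(B)$, hence using the exponentially equivariant extension of the identity map and Proposition \ref{prop-conv-n} with the convex set $D(p+V_p)$ containing $D(y)$, one concludes $z \in p + V_p$. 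Thus $B \subset p + V_p$, so $x \in p + V_p$ and the set is closed.

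The main obstacle I anticipate is item (3): one must be careful that "extending the geodesic" arguments are carried out through the developing map in $\mathcal N$ (where all exponentials exist) and then pulled back, invoking Lemma \ref{lem-dev-exp} and Proposition \ref{prop-conv-n} to control which pulled-back vectors are visible; the convexity hypothesis on $p + V_p$ is exactly what makes Proposition \ref{prop-conv-n} applicable and prevents the kind of "wrapping around" that would occur if $D$ failed to be injective on $p + V_p$. Items (1) and (2) are essentially direct corollaries of the machinery already set up (Lemma \ref{lem-dev-exp} and Proposition \ref{prop-conv-n}), so the real content is packaging the open-and-closed argument for (3) correctly.
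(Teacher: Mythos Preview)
Your arguments for items (1) and (2) are correct. Item (1) is exactly the paper's argument; for item (2) the paper argues directly from uniqueness of geodesic directions rather than via item (1) and Lemma \ref{lem-dev-exp}, but your route is equally valid and arguably cleaner.

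For item (3), your overall strategy --- the open-and-closed argument, using item (2) to get injectivity of $D$ on $B \cup (p+V_p)$ --- matches the paper's, but the final step has a genuine gap. Invoking Proposition \ref{prop-conv-n} with $g = e$ is vacuous: the map $G$ becomes the identity, $v = G(0) = 0$, and the conclusion collapses to ``if $w$ lies in a convex subset of $V_p$ then $w \in V_p$'', which gives no new visibility. Routing through the intermediate point $y \in B \cap (p+V_p)$ only exhibits each $z\in B$ as $\exp_y(u)$ for some $u \in V_y$; since the concatenation of the geodesic from $p$ to $y$ with the geodesic from $y$ to $z$ is in general not a geodesic, this does not produce a vector $w \in V_p$ with $\exp_p(w) = z$.

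The paper's fix is to draw the geodesic in $\mathcal N$ directly from $D(p)$ to $D(x)$: write $D(x) = D(p) + {\rm d}D_p(w)$. Since $D(p+V_p)$ is convex with $D(p)$ interior and $D(x)$ in its closure, the open segment $t \mapsto D(p) + {\rm d}D_p(tw)$ for $t \in [0,1)$ stays in $D(p+V_p)$ and lifts (uniquely, by Lemma \ref{lem-dev-exp}) to $\exp_p(tw)$ in $p+V_p$. For $t$ close to $1$ the image lies in $D(B)$, and by the injectivity of $D$ on $B \cup (p+V_p)$ obtained from item (2), this lift agrees with $D|_B^{-1}$; hence $\exp_p(tw)$ extends continuously to $t=1$ with value $x$, so $w \in V_p$ and $x \in p+V_p$. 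This is the missing step you need in place of the appeal to Proposition \ref{prop-conv-n}.
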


\begin{proof}
In order.
\begin{enumerate}
\item If $z\in D(C)$ then there exists $w\in \mathfrak n$ such that  $z=D(p)+w$. 
Let $v= ({\rm d}D)^{-1}(w)$. Then $p+tv$ is well defined and belongs to  $C$ for $t$ small enough. But $D(p)+{\rm d}D_p(tv)$ is well defined and in $D(C)$ for all $t\leq 1$. Hence $p+tv$ is well defined for $t=1$ by taking $D^{-1}(D(p)+{\rm d}D_p(tv))$.
\item Take $p\in C_1\cap C_2$ and $z=D(p)$. If $D(x_1)=D(x_2)$ for $x_1\in C_1$ and $x_2\in C_2$ then the geodesic segment from $z$ to $D(x_1)$ is the same that joins $z$ to $D(x_2)$, hence for the same direction. Hence $x_1=x_2$.
\item It suffices to show that $p+V_p$ is closed in $\tilde M$ since it is already open and non empty. Let $y$ be in the adherence of $p+V_p$. Let $C\subset y+V_y$ be a convex subset containing $y$ (it can always be constructed since $D$ is a local diffeomorphism). 

Then $C\cap p+V_p$ has a non empty intersection, hence $D$ is injective on $C\cup p+V_p$. In $D(C\cup p+V_p)$ there exists $v$ such that $D(q) = D(p)+{\rm d}D_p(v)$. The geodesic $D^{-1}(D(p)+{\rm d}D_p(tv))$ is well defined and in $p+V_p$ for $t<1$ by hypothesis. At $t=1$ the point is well defined by injectivity of $D$, is equal to $q$ and belongs to the same geodesic, since for $t$ large enough it belongs to $C$. Therefore  $q\in p+ V_p$.
\qedhere
\end{enumerate}
\end{proof}
Remark that this last property shows that if $p+V_p$ is convex, then $D$ is a diffeomorphism. 

\begin{corollary}
Suppose that $D$ is not a covering map onto $\mathcal{N}$ (hence not a diffeomorphism), then $V_p$ is never equal to ${\rm T}_p\tilde M$.
\end{corollary}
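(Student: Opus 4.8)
The plan is to argue by contraposition: I will show that if $V_p={\rm T}_p\tilde M$ for some $p\in\tilde M$, then $D$ is a covering map onto $\mathcal N$, contradicting the hypothesis. The key observation is that the assumption $V_p={\rm T}_p\tilde M$ already forces $p+V_p$ to be a \emph{convex} subset of $\tilde M$ in the sense defined above, after which Proposition~\ref{prop-conv-gen}(3) and the remark following it close the argument.

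First I would record the injectivity of $D$ on $p+V_p=\exp_p({\rm T}_p\tilde M)$, which is exactly Lemma~\ref{lem-dev-exp}. Then I would identify the image. Since ${\rm d}D_p:{\rm T}_p\tilde M\to\mathfrak n$ is a linear isomorphism and $D(\exp_p(v))=D(p)+{\rm d}D_p(v)$, we get $D(p+V_p)=\{\,D(p)+w : w\in\mathfrak n\,\}$. Because $\mathcal N$ is $2$-nilpotent, the exponential $\exp:\mathfrak n\to\mathcal N$ is a diffeomorphism, and composing with the left translation by $D(p)$ shows that $w\mapsto D(p)+w$ is a bijection of $\mathfrak n$ onto $\mathcal N$. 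Hence $D(p+V_p)=\mathcal N$, which is convex (geodesics are globally defined, and in the coordinates of $\mathfrak n$ they are straight lines, so $\mathcal N$ contains every geodesic segment). Therefore $D|_{p+V_p}$ is an injective local diffeomorphism onto the convex set $\mathcal N$, i.e.\ $p+V_p$ is a convex subset of $\tilde M$.

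Finally I would apply Proposition~\ref{prop-conv-gen}(3): since $\tilde M$ contains a point $p$ with $p+V_p$ convex, it follows that $p+V_p=\tilde M$. Consequently $D=D|_{p+V_p}$ is a diffeomorphism from $\tilde M$ onto $\mathcal N$, in particular a covering map onto $\mathcal N$, which contradicts the standing hypothesis. This contradiction yields $V_p\neq{\rm T}_p\tilde M$ for every $p$.

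The corollary is essentially a repackaging of results already proved, so I do not expect a genuine obstacle; the only step meriting a moment of care is the identification $D(p+V_p)=\mathcal N$, which rests on the global exponential picture for the $2$-step nilpotent group $\mathcal N$ rather than on any property of $\tilde M$ itself.
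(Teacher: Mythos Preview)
Your proof is correct and follows essentially the same route as the paper: argue by contraposition, observe via Lemma~\ref{lem-dev-exp} that $D$ is injective on $p+V_p$ with image $D(p)+\mathfrak n=\mathcal N$ (hence $p+V_p$ is convex), then invoke Proposition~\ref{prop-conv-gen}(3) to get $p+V_p=\tilde M$ and conclude that $D$ is a diffeomorphism onto $\mathcal N$. The paper's proof is simply a more compressed version of what you wrote.
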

\begin{proof}
The tangent space ${\rm T}_p\tilde M$ is convex since  ${\rm d}D_p$ is an isomorphism. If $V_p = {\rm T}_p\tilde M$, then $p+V_p=\tilde M$ and $D(p+V_p)=D(p)+\mathfrak n = \mathcal N$. It follows that $D:\tilde M \to \mathcal N$ is a diffeomorphism, hence a covering.
\end{proof}

\subsection{Proof of Fried's theorem}

We recall the hypotheses. The manifold $M$ is a closed $({\rm Sim}(\mathcal{N}),\mathcal{N})$-manifold such that the developing map $D:\tilde M\to \mathcal{N}$ is not a covering map.

Recall that we set a distance function $d_{\mathcal N}(x,y) = \|-x+y\|$ from the pseudo-norm $\|\cdot\|$ which is compatible with dilatations: $\|\lambda x\|=|\lambda|\|x\|$. The triangle inequality also remains true. This distance function is chosen so it is left-invariant: $d_{\mathcal N}(a+x,a+y)=d_{\mathcal N}(x,y)$. In particular, we can define the open ball of radius $R$ centered in $0$ to be 
\[ B(0,R) = \{x\in\mathcal N \,\mid\, d_{\mathcal N}(0,x)<R\}. \]
And in general, the open ball of radius $R$ centered in $p$ is given by the left translation of $B(0,R)$ to $p$.

The preceding corollary shows that for every $p\in \tilde M$, the image $D(p+V_p) =: D(\exp_p(V_p))$ is never equal to $\mathcal{N}$.
Hence, for each $p\in \tilde M$, there exists an open subset $B_p\subset V_p\subset {\rm T}_p\tilde M$ such that the image $D(p+B_p)$ is the maximal open ball in $D(p+V_p)$ centered in $D(p)$. 

We let
\begin{equation}
 r: \tilde M \to ]0,+\infty[
\end{equation}
be the map that associates to $p\in \tilde M$ the radius of the ball $D(p+B_p)$ in $\mathcal{N}$.

\begin{lemma}\label{lem-r-contract}
For $p\in \tilde M$ and $q\in p+B_p$, 
\begin{equation}
r(p)\leq r(q)+d_{\mathcal N}(D(p),D(q))
\end{equation}
and therefore $r$ is a local contraction.

Furthermore, if $g\in\pi_1(M)$ then $r(gp)=\lambda(g)r(p)$ with $\lambda(g)$ the dilatation factor of the holonomy transformation $\rho(g)\in{\rm Sim}(\mathcal{N})$.
\end{lemma}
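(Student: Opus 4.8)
The plan is to exploit the geometry of balls in $\mathcal N$ together with the maximality that defines $r$. First I would prove the triangle-type inequality $r(p)\le r(q)+d_{\mathcal N}(D(p),D(q))$ for $q\in p+B_p$. Write $z=D(p)$, $z'=D(q)$, so $z'=z+w$ with $w={\rm d}D(v)$, $v\in B_p$, and $d_{\mathcal N}(z,z')=\|w\|<r(p)$. The key point is that $p+V_p$ contains the ball $B(z,r(p))$ pulled back through $D$, and since $D$ is injective on $p+V_p$ (Lemma \ref{lem-dev-exp}) and geodesics from $q$ are straight lines, every point of $\mathcal N$ at $d_{\mathcal N}$-distance less than $r(p)-\|w\|$ from $z'$ that is reachable from $z'$ along a geodesic staying inside $B(z,r(p))$ lies in $D(q+V_q)$. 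Concretely, for such a target $z''=z'+w'$ I would connect $z'$ to $z''$ by the geodesic $z'+tw'$, check by the left-invariance of $d_{\mathcal N}$ and the triangle inequality that $d_{\mathcal N}(z,z'+tw')\le \|w\|+\|tw'\|<r(p)$ for all $t\in[0,1]$, so this whole segment sits in the ball already inside $D(p+V_p)$; then pulling back through $D$ (using injectivity) gives a geodesic in $\tilde M$ from $q$ to a preimage of $z''$, witnessing $w'\in {\rm d}D(V_q)$. Hence $B(z',r(p)-\|w\|)\subset D(q+V_q)$, and since $D(q+B_q)$ is the \emph{maximal} ball about $z'$ inside $D(q+V_q)$, we get $r(q)\ge r(p)-\|w\|=r(p)-d_{\mathcal N}(D(p),D(q))$, which is the claimed inequality. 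Symmetrising (or simply observing that $d_{\mathcal N}$ is symmetric and that $r$ is continuous since $D$ is a local diffeomorphism) gives $|r(p)-r(q)|\le d_{\mathcal N}(D(p),D(q))$ on small balls, i.e. $r$ is a local contraction with respect to the distance induced by $D$.

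For the equivariance statement, let $g\in\pi_1(M)$ and set $\rho(g)(x)=\lambda(g)P(x)+c$ with $P\in{\rm U}(\mathcal N)$, $\lambda(g)\in\mathbf R_+$. The map $g:\tilde M\to\tilde M$ is a diffeomorphism sending geodesics to geodesics (Lemma \ref{lem-conv-g} and the discussion preceding it), and by equivariance of the developing map $D\circ g=\rho(g)\circ D$. Therefore $g$ maps $p+V_p$ bijectively onto $gp+V_{gp}$: a vector $v$ is visible at $p$ exactly when the geodesic $p+tv$ extends to $t=1$, and applying $g$ turns this into the geodesic $gp+t\,{\rm d}(gp)(v)$ extending to $t=1$, which says ${\rm d}g(v)$ is visible at $gp$; this is reversible using $g^{-1}$. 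Consequently $D(gp+V_{gp})=\rho(g)\big(D(p+V_p)\big)$. Since $\rho(g)$ is a similarity with factor $\lambda(g)$ fixing no finite structure but scaling all $d_{\mathcal N}$-distances by $\lambda(g)$ (here one uses $\|\lambda P(x)\|_{\mathcal N}=\lambda\|x\|_{\mathcal N}$, from the homogeneity of $\|\cdot\|_{\mathcal N}$ and the fact that $P\in{\rm U}(\mathcal N)$ preserves the pseudo-norm), it carries the maximal ball about $D(p)$ inside $D(p+V_p)$ to the maximal ball about $\rho(g)(D(p))=D(gp)$ inside $D(gp+V_{gp})$, multiplying its radius by $\lambda(g)$. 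Hence $r(gp)=\lambda(g)\,r(p)$.

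The main obstacle I anticipate is the first inequality: one has to argue carefully that a whole geodesic segment issued from $q=D^{-1}(z')$ — not just a short initial piece — remains visible, and the clean way to do this is precisely the ball-inside-$D(p+V_p)$ estimate above, using the left-invariance of $d_{\mathcal N}$ and the triangle inequality so that the segment never escapes the radius-$r(p)$ ball about $z$, combined with the injectivity of $D$ on $p+V_p$ from Lemma \ref{lem-dev-exp} to pull the segment back unambiguously. One small point to keep in mind is that $d_{\mathcal N}$ is not symmetric in its two slots \emph{a priori} from the formula $d_{\mathcal N}(x,y)=\|-x+y\|$, but the remark following its definition shows it is in fact symmetric, so no extra care is needed there. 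Everything else is bookkeeping with the explicit description of ${\rm Sim}(\mathcal N)$ and the straight-line description of geodesics in the exponential coordinates on $\mathcal N$.
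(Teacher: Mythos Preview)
Your proof is correct and rests on the same ingredients as the paper's (convexity of balls, maximality defining $r$, and similarity scaling), but the execution differs in both parts. For the inequality, the paper does not show that $B(z',r(p)-\|w\|)\subset D(q+V_q)$; instead it first invokes Proposition~\ref{prop-conv-gen} to obtain $p+B_p\subset q+V_q$, then picks an \emph{invisible} $v\in\partial B_q$, notes that $D(q)+{\rm d}D_q(v)\notin D(q+V_q)\supset D(p+B_p)$, and concludes $r(p)\le d_{\mathcal N}(D(p),D(q)+{\rm d}D_q(v))\le r(q)+d_{\mathcal N}(D(p),D(q))$ by one triangle inequality. Your route is the natural dual: you argue constructively that everything close enough to $D(q)$ is visible from $q$, in effect re-deriving the relevant case of Proposition~\ref{prop-conv-gen} by hand via the geodesic-in-ball estimate. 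For the equivariance, the paper works with $B_p$ rather than $V_p$, using Lemma~\ref{lem-conv-g} and Proposition~\ref{prop-conv-gen} to get $g(p+B_p)\subset gp+V_{gp}$ and then maximality to force $\rho(g)D(p+B_p)=D(gp+B_{gp})$; your observation that $g$ maps $p+V_p$ bijectively onto $gp+V_{gp}$ is a slightly more direct way to reach the same conclusion. One small caveat worth making explicit: in the non-Euclidean $\mathcal N$ the quantity $\|tw'\|_{\mathcal N}$ is not $t\|w'\|_{\mathcal N}$ (the norm is homogeneous for the anisotropic dilation $(\lambda u,\lambda^2 I)$, not for linear scaling in $\mathfrak n$), but one still has $\|tw'\|_{\mathcal N}\le\|w'\|_{\mathcal N}$ for $t\in[0,1]$, which is all your estimate needs.
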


\begin{proof}
Let $p\in \tilde M$. 
and let $q\in p+B_p$. 
By proposition \ref{prop-conv-gen}, $p+B_p\subset q+V_q$.
Let $v\in \partial B_q$ such that $q+v$ is not defined. Then $D(q)+{\rm d}D_q (v)$ does not belong to $D(q+V_q)$ hence does not belong to $D(p+B_p)$ which is precisely an open ball of radius $r(p)$. Therefore
\begin{align}
r(p) &\leq d_{\mathcal N}(D(p),D(q)+{\rm d}D_q(v)) \\
&\leq d_{\mathcal N}(D(q),D(q)+{\rm d}D_q(v)) + d_{\mathcal N}(D(p),D(q)) \\
&\leq  r(q)+d_{\mathcal N}(D(p),D(q)).
\end{align}

To prove the second part, we prove that $\rho(g)$ transforms $D(p+B_p)$ into $D(gp + B_{gp})$. If that is true then for $v\in\partial D(p+B_p)$, we have $\rho(g)v\in \partial D(gp+B_{gp})$ and therefore
\begin{align}
r(gp) = d_{\mathcal N}(D(gp),\rho(g)v) &= d_{\mathcal N}(\rho(g)D(p),\rho(g)v) \\
&= d_{\mathcal N}(0,\rho(g)(-D(p)+v))\\
&= \lambda(g) d_{\mathcal N}(0,-D(p)+v)\\
&=\lambda(g)d_{\mathcal N}(D(p),v)=\lambda(g)r(p).
\end{align}

In fact, it suffices to prove that $\rho(g) D(p+B_p)\subset D(gp+B_{gp})$ since  with $g^{-1}$ we would get $\rho(g)^{-1}D(gp+B_{gp}) =\rho(g^{-1})D(gp+B_{gp}) \subset D(p+B_p)$, and by applying $\rho(g)$ on both ends, we get $D(gp+B_{gp})\subset \rho(g)D(p+B_p)$.

By lemma \ref{lem-conv-g}, $g$ sends $p+B_p$ into a convex subset containing $gp$, and by proposition \ref{prop-conv-gen} this convex is contained in $gp+V_{gp}$. But $\rho(g)$ preserves open balls, hence $\rho(g)D(p+B_p)$ is an open ball contained in $D(gp+B_{gp})$ by maximality of $B_{gp}$.
\end{proof}

The equivariance between $r$ and $\lambda$ allows a sense of length in $M$ which will be invariant by the holonomy group.

In $\tilde M$ we set
\begin{equation}
d_{\tilde M}(p_1,p_2) =  \frac{d_\mathcal{N}(D(p_1),D(p_2))}{r(p_1)+r(p_2)}
\end{equation}
which is $\pi_1(M)$-invariant.

Let $p\in \tilde M$ and let $\epsilon>0$. We will describe open balls of radius $\epsilon$ in $\tilde M$ by locally looking at the pseudo-distance function $d_{\tilde M}$ on couples in $(p,p+B_p)$. On this set, $D$ is injective, hence $d_{\mathcal N}(D(p),D(x))$ really is a distance function. Also, by lemma \ref{lem-r-contract}, the function $r$ is contracting. This gives
\begin{align}
d_{\tilde M}(p,x) &= \frac{d_{\mathcal N}(D(p),D(x))}{r(p)+r(x)} \\
d_{\tilde M}(p,x) &\geq  \frac{d_{\mathcal N}(D(p),D(x))}{2r(x) + d_{\mathcal N}(D(p),D(x))}
\end{align}
hence if we suppose $d_{\tilde M}(p,x)<\epsilon$ with $\epsilon$ small enough, we get
\begin{align}
\frac{d_{\mathcal N}(D(p),D(x))}{2r(x) + d_{\mathcal N}(D(p),D(x))} &< \epsilon \\
\frac{d_{\mathcal N}(D(p),D(x))}{r(x)} &< \frac{2\epsilon}{1-\epsilon} .
\end{align}
If $r(p)\geq r(x)$ then the same inequality is true for $r(p)$ instead of $r(x)$. If $r(p)< r(x)$, then $p\in x+B_x$ ($p$ is visible from $x$ and lies inside the ball since it is closer to $x$ than the boundary  at distance $r(x)$) and by repeting the same argument for $(x,p)$ we get the preceding inequality with $r(p)$ instead of $r(x)$. In either cases
\begin{equation}
\frac{d_{\mathcal N}(D(p),D(x))}{r(p)} < \frac{2\epsilon}{1-\epsilon} .
\end{equation}

Conversely by using $r(x)\geq r(p)-d_{\mathcal N}(D(p),D(x))$
\begin{equation}
d_{\tilde M}(p,x) \leq \frac{d_{\mathcal N}(D(p),D(x))}{2r(p) - d_{\mathcal N}(D(p),D(x))}
\end{equation}
hence for $\epsilon>0$
\begin{align}
\frac{d_{\mathcal N}(D(p),D(x))}{2r(p) - d_{\mathcal N}(D(p),D(x))} &< \epsilon \\
\frac{d_{\mathcal N}(D(p),D(x))}{r(p)} &< \frac{2\epsilon}{1+\epsilon} 
\end{align}

This shows that for $\epsilon$ small enough, the ball
\begin{equation}
B_{\tilde M}(p,\epsilon) := \{x\in p+B_p \,\mid\, d_{\tilde M}(p,x)<\epsilon\}
\end{equation}
has an approximation in its developing image:
\begin{equation}
\left\{ \frac{d_{\mathcal N}(D(p),D(x))}{r(p)} < \frac{2\epsilon}{1+\epsilon} \right\} \subset D(B_{\tilde M}(p,\epsilon)) \subset \left\{ \frac{d_{\mathcal N}(D(p),D(x))}{r(p)}<\frac{2\epsilon}{1-\epsilon} \right\}
\end{equation}
is contained in an open of $\tilde M$. Hence those open balls provide the same basis for the topology of $\tilde M$.
This means that  $d_{\tilde M}(p,x)<\epsilon$ is true when in the ball $D(p+B_p)$ normalized by the radius $r(p)$, the distance between $x$ and $p$  is less than $\simeq 2\epsilon$.

If $g\in \pi_1(M)$, then by lemma \ref{lem-conv-g} and proposition \ref{prop-conv-gen} (compare also with the proof of lemma \ref{lem-r-contract} where we proved that $g(p+B_p) = gp+B_{gp}$), $gB_{\tilde M}(p,\epsilon)$ is a subset of $gp+B_{gp}$. The distance function $d_{\tilde M}$ being $\pi_1$-invariant, this shows that 
\begin{equation}
\forall g\in\pi_1(M), \; gB_{\tilde M}(p,\epsilon)= B_{\tilde M}(gp,\epsilon).
\end{equation}

In $M$, we can define a system of open neighborhoods by projecting the previously constructed balls of $\tilde M$.
\begin{equation}
B_M(x,\epsilon) := \{ \pi(B_{\tilde M}(p,\epsilon)) \,\mid\, p\in\pi^{-1}(x)\} = \pi(B_{\tilde M}(p,\epsilon)), p\in \pi^{-1}(x).
\end{equation}
The last equation being justified by the equality $gB_{\tilde M}(p,\epsilon) = B_{\tilde M}(gp,\epsilon)$.
For $\epsilon$ small enough, the ball $B_M(x,\epsilon)$ is therefore a trivializing neighborhood of $x$, and this system of open balls gives the same topology for $M$ as the original one.

\par\vspace{\baselineskip}\noindent
We will now construct holonomy transformations which will be very contracting, with a common center point and with no rotation. The idea is to take $v\in\partial B_p$ such that $p+v$ is not defined and to compare with $D(p)+{\rm d}D v$ in $\mathcal{N}$ where it must be defined. The holonomy transformations will be centered in $D(p)+{\rm d}Dv=: z+w$. See figure \ref{fig-fried} for the global setting.

\begin{figure}[ht]
\centering
\includegraphics[scale=0.75]{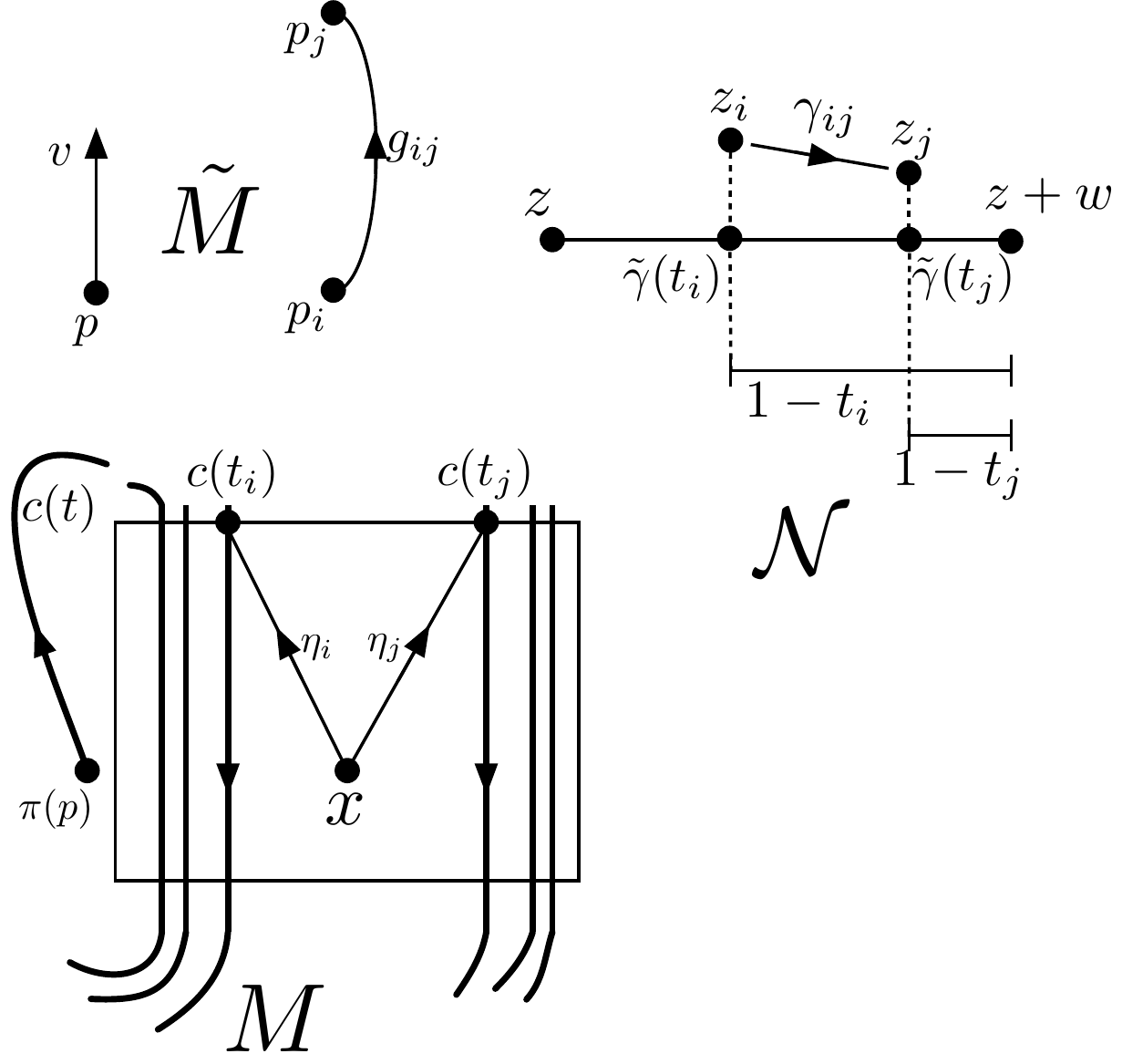}
\caption{The general setting.}\label{fig-fried}
\end{figure}

Consider $p\in \tilde M$ such that $\exp_p(tv)$ is defined for $0\leq t<1$ but not for $t=1$. The geodesic curve $[c(t)]=\exp_p(tv)$ is an incomplete geodesic. In $M$, the corresponding curve $c(t)=\pi([c(t)])$ is then an infinite long curve in a compact space. Hence, there is a recurrent point $x\in M$.

Let $B_M(x,\epsilon)$ be a ball with $\epsilon$ radius, for $\epsilon>0$ small enough such that $B_M(x,\epsilon)$ is trivializing $\pi:\tilde M \to M$. Let $0<t_1<\dots<t_n<\dots$ be entry times such that $t_n\to 1$ and $c(t_n)\in B_M(x,\epsilon)$ but $c([t_n,t_{n+1}])\not\subset B_M(x,\epsilon)$ (it just says that $c$ exits $B_M(x,\epsilon)$ before time $t_{n+1}$).
Since $\epsilon$ is small enough, for each $t_n$, up to homotopy we can set uniquely  $\eta_n$ the  segment from $x$ to $c(t_n)$ contained in $B_M(x,\epsilon)$.

By construction we have the following lemma.
\begin{lemma}\label{lem-technique}
For any $i$, 
 $[c(t_i)]\in p_i+B_{p_i}$ and  $d_{\tilde M}([c(t_i)],p_i)<\epsilon$.
\end{lemma}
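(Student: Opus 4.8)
\textbf{Proof plan for Lemma \ref{lem-technique}.}
The idea is to unwind the definitions of the entry points $p_i$ and of the lifted balls $B_{\tilde M}$, and to use the recurrence together with the triviality of $B_M(x,\epsilon)$. First I would fix, once and for all, a lift $\tilde x\in\pi^{-1}(x)$ and the corresponding ball $B_{\tilde M}(\tilde x,\epsilon)$, so that $B_M(x,\epsilon)=\pi(B_{\tilde M}(\tilde x,\epsilon))$ and $\pi$ restricted to $B_{\tilde M}(\tilde x,\epsilon)$ is a homeomorphism onto $B_M(x,\epsilon)$ (this is exactly where $\epsilon$ small enough is used, so that $B_M(x,\epsilon)$ is trivializing). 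Since $c(t_i)\in B_M(x,\epsilon)$, there is a unique lift $[c(t_i)]$ of $c(t_i)$ lying in $B_{\tilde M}(\tilde x,\epsilon)$; I would \emph{define} $p_i$ to be the center of that ball, i.e. the appropriate translate $g_i\tilde x$ where $g_i\in\pi_1(M)$ is determined by the homotopy class of $\eta_i$ (the chosen segment from $x$ to $c(t_i)$ inside $B_M(x,\epsilon)$). Equivalently, $[c(t_i)]$ and $p_i$ lie in the same lifted ball, namely $p_i+B_{p_i}$, because by the equivariance $g B_{\tilde M}(\tilde x,\epsilon)=B_{\tilde M}(g\tilde x,\epsilon)$ established just before the lemma, $B_{\tilde M}(\tilde x,\epsilon)$ transported by $g_i$ is $B_{\tilde M}(p_i,\epsilon)$, which by construction is contained in $p_i+B_{p_i}$.

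Concretely, the two assertions follow almost immediately. For the first, $B_{\tilde M}(p_i,\epsilon)\subset p_i+B_{p_i}$ is built into the very definition of $B_{\tilde M}(p,\epsilon)=\{x\in p+B_p\mid d_{\tilde M}(p,x)<\epsilon\}$, so $[c(t_i)]\in B_{\tilde M}(p_i,\epsilon)$ gives $[c(t_i)]\in p_i+B_{p_i}$. For the second, $d_{\tilde M}([c(t_i)],p_i)<\epsilon$ is exactly the condition $[c(t_i)]\in B_{\tilde M}(p_i,\epsilon)$, which holds because $[c(t_i)]=g_i\cdot(\text{the lift of }c(t_i)\text{ in }B_{\tilde M}(\tilde x,\epsilon))$ and $g_iB_{\tilde M}(\tilde x,\epsilon)=B_{\tilde M}(g_i\tilde x,\epsilon)=B_{\tilde M}(p_i,\epsilon)$, together with the $\pi_1$-invariance of $d_{\tilde M}$: $d_{\tilde M}([c(t_i)],p_i)=d_{\tilde M}(g_i^{-1}[c(t_i)],\tilde x)<\epsilon$ since $g_i^{-1}[c(t_i)]\in B_{\tilde M}(\tilde x,\epsilon)$.

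The one point that needs care, and which I regard as the main (minor) obstacle, is the \emph{well-definedness} of $p_i$: it must be checked that the lift of $c(t_i)$ in $B_{\tilde M}(\tilde x,\epsilon)$ is unique and that the resulting $g_i$ (hence $p_i$) is canonically attached to the data $(t_i,\eta_i)$. Uniqueness of the lift is precisely the statement that $B_M(x,\epsilon)$ is a trivializing neighborhood — which was arranged when choosing $\epsilon$ — so that $\pi^{-1}(B_M(x,\epsilon))$ is a disjoint union of copies of $B_M(x,\epsilon)$, exactly one of which contains $\tilde x$. The segment $\eta_i$, being contained in the simply-connected-ish trivializing ball $B_M(x,\epsilon)$ and joining $x$ to $c(t_i)$, lifts uniquely starting from $\tilde x$, and its endpoint is the lift $[c(t_i)]$ used above; this pins down $p_i$. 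Once this bookkeeping is in place, no further computation is required, and the lemma is just a restatement of membership in $B_{\tilde M}(p_i,\epsilon)$.
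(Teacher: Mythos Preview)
Your proposal is correct and follows essentially the same approach as the paper: since $B_M(x,\epsilon)$ is trivializing, each lift $[c(t_i)]$ lands in a unique ball $B_{\tilde M}(p_i,\epsilon)$ for some $p_i\in\pi^{-1}(x)$, and the two conclusions are then immediate from the definition $B_{\tilde M}(p_i,\epsilon)=\{y\in p_i+B_{p_i}\mid d_{\tilde M}(p_i,y)<\epsilon\}$. The paper's proof is just a two-line version of what you wrote; your extra bookkeeping via $g_i$ and $\eta_i$ is not needed here (and note a small slip: the lift of $\eta_i$ starting at $\tilde x$ ends at the lift of $c(t_i)$ inside $B_{\tilde M}(\tilde x,\epsilon)$, not at $[c(t_i)]$ itself), but it does not affect the argument.
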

\begin{proof}
By hypothesis and by the preceding discussion, since $B_M(x,\epsilon)$ is trivializing, if $c(t_i)\in B_M(x,\epsilon)$ then any lift $c(t_i)$ is  in $B_{\tilde M}(\hat p,\epsilon)$ for $\hat p\in\pi^{-1}(x)$. In particular  $[c(t_i)]$ is at distance at most $\epsilon$ from $p_i$ and lies in $p_i+B_{p_i}$ by definition of $B_{\tilde M}(p_i,\epsilon)$.
\end{proof}

We set
\begin{equation}
 g_{ij} =  \eta_j^{-1} \star c(t)|_{[t_i,t_{j}]} \star \eta_i,
\end{equation}
this is a family of transformations belonging to $\pi_1(M,x)$. 
We are now interested in $g_{ij}$ acting on $\tilde M$. 
The path $\tilde g_{ij}$ lifting $g_{ij}$ sends $p_i$ to $p_j$ by construction. We denote by $\gamma_{ij}$ the holonomy transformation  $\rho(g_{ij})$, and we denote by $\tilde \gamma_{ij}$ the image   $D(\tilde g_{ij})$ and by $\tilde \gamma$ the image $D([c(t)])$.

The vector $v$ initially chosen is sent to $w$ by ${\rm d}D$ and $z:=D(p)$. Each $p_i$ is sent to $z_i$ by $D$. See again figure \ref{fig-fried}.

\begin{proposition}
For $i,j$ large enough with $j\gg i$, the transformation $\gamma_{ij}$ is centered as close to $z+w$ as desired, with a dilatation factor as close to $0$ as desired and with an orthogonal part as close to identity as desired.
(In other terms, $\gamma_{ij}(x) = \lambda P(x-\beta)+\beta$ with $\lambda\to 0$, $P\to E$ and $\beta\to z+w$.)
\end{proposition}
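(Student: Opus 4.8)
The plan is to analyze the sequence of holonomy transformations $\gamma_{ij} = \rho(g_{ij})$ by extracting from the equivariance relations the three asymptotic statements (center, dilatation, orthogonal part) separately, using the contraction property of $r$ from Lemma~\ref{lem-r-contract} as the engine that forces the dilatation factor to zero. First I would write $\gamma_{ij}$ in normal form $\gamma_{ij}(x) = \lambda_{ij} P_{ij}(x - \beta_{ij}) + \beta_{ij}$ (valid because the dilatation factor will turn out to be nonzero, so $\gamma_{ij}$ has a unique fixed point $\beta_{ij}$ once $\lambda_{ij}\neq 1$; for the finitely many exceptional indices one discards them). The dilatation factor is $\lambda_{ij} = \lambda(g_{ij})$, and by the equivariance $r(g_{ij}p_i) = \lambda(g_{ij}) r(p_i)$ together with $\tilde g_{ij}\cdot p_i = p_j$ we get $\lambda_{ij} = r(p_j)/r(p_i)$. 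So I must show $r(p_j)/r(p_i)\to 0$ as $j\gg i\to\infty$.

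The key step is this ratio estimate. By Lemma~\ref{lem-technique} each $[c(t_i)]$ lies in $p_i + B_{p_i}$ with $d_{\tilde M}([c(t_i)],p_i)<\epsilon$; combined with the local description of $d_{\tilde M}$ established just before Lemma~\ref{lem-technique}, this means $d_{\mathcal N}(z_i, D([c(t_i)]))$ is comparable to $r(p_i)$ up to a factor $\simeq 2\epsilon$. Since $[c(t)]$ is an \emph{incomplete} geodesic reaching the non-visible vector $v$ at $t=1$, its developing image $\tilde\gamma$ converges in $\mathcal N$ to the finite point $z+w$; hence $D([c(t_i)])\to z+w$, and so the quantities $d_{\mathcal N}(z_i, z+w)\to 0$. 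But $r(p_i)$ is, up to the bounded factor coming from $\epsilon$, bounded below by $d_{\mathcal N}(z_i, z+w)$ is not quite what we want — rather, the correct direction is: the visible ball $D(p_i + V_{p_i})$ cannot contain $z+w$ along the geodesic direction, because $z+w$ is the image of the non-visible endpoint of the geodesic through $p$; transporting this obstruction by $g_{0i}$ (the path along $c$ from $p$ to $p_i$) shows $r(p_i)$ is controlled \emph{above} by something going to $0$ as $i\to\infty$, while a fixed lower reference (e.g. $r$ at the recurrent base lift, which recurs infinitely often up to the holonomy action) lets us choose $j$ with $r(p_j)\ll r(p_i)$. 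Concretely: by recurrence infinitely many $c(t_n)$ return to $B_M(x,\epsilon)$, the $r$-values along a fixed lift are all equal up to the dilatation factors of the loops $g_{i_k i_{k+1}}$, and since $t_n\to 1$ and the geodesic is incomplete the total ``length'' contracts, forcing the product of dilatation factors to $0$; picking $i$ then $j$ far enough out makes $\lambda_{ij}=r(p_j)/r(p_i)$ as small as desired. I would organize this as: (a) $\tilde\gamma(t)\to z+w$; (b) therefore $z_i\to z+w$ and $r(p_i)\to 0$; (c) $\lambda_{ij}=r(p_j)/r(p_i)$, and by choosing $j\gg i$ after fixing $i$ large, $\lambda_{ij}\to 0$.

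For the center $\beta_{ij}$: the equivariance $\gamma_{ij}(z_i) = z_j$ together with $z_i, z_j\to z+w$ gives $\gamma_{ij}(z_i)\to z+w$ with $z_i\to z+w$; since $\beta_{ij} = \gamma_{ij}(\beta_{ij})$ and $\gamma_{ij}$ is $\lambda_{ij}$-Lipschitz with $\lambda_{ij}\to 0$, the fixed point $\beta_{ij}$ is within $\lambda_{ij}/(1-\lambda_{ij})\cdot d_{\mathcal N}(z_i,\beta_{ij})$ of $z_i$ after one iterate — more cleanly, from $d_{\mathcal N}(\beta_{ij}, z_j) = d_{\mathcal N}(\gamma_{ij}\beta_{ij}, \gamma_{ij} z_i) = \lambda_{ij} d_{\mathcal N}(\beta_{ij}, z_i)$ one solves to bound $d_{\mathcal N}(\beta_{ij}, z+w)$ by a multiple of $d_{\mathcal N}(z_i, z+w) + d_{\mathcal N}(z_j, z+w)$ which $\to 0$. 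For the orthogonal part $P_{ij}$: write $P_{ij} = A_{ij} M_{ij}$ in the $M$ (compact) and using that $\gamma_{ij}$ maps the ball $D(p_i + B_{p_i})$ isometrically (up to scaling) onto $D(p_j + B_{p_j})$, and both balls, renormalized by their radii, have developing images converging (by (b) and the $d_{\tilde M}$-comparison) to a fixed standard ball around $z+w$; the renormalized transformation $x\mapsto (\gamma_{ij}(z_i + r(p_i)x) - z_j)/r(p_j)$ then converges to the orthogonal map $P_\infty$ stabilizing the standard ball. To kill the rotation one passes to a further subsequence where $P_{ij}\to P_\infty$ in the compact group $M$ and, using that $c(t_n)$ returns to the \emph{same} recurrent point, composes with the inverse loop to cancel $P_\infty$ — i.e. one replaces $g_{ij}$ by $g_{i'j}$ for suitable $i'$ in the same recurrence class so that the accumulated rotations telescope to the identity. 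The main obstacle is precisely this last point: extracting that the orthogonal parts can be made to converge to the \emph{identity} rather than merely to \emph{some} fixed rotation requires carefully exploiting the recurrence (that $x$ is recurrent, not just that the curve is long) so that $\gamma_{ij}$ and $\gamma_{jk}$ share compatible framings and the rotations compose; everything else is a bounded Lipschitz/contraction estimate once the radius-to-zero statement (b) is in hand.
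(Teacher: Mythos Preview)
Your proposal is essentially correct and reaches the same conclusion, but there are two genuine differences from the paper worth noting.

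For the dilatation $\lambda_{ij}\to 0$, you and the paper do the same thing: identify $\lambda_{ij}=r(p_j)/r(p_i)$ via the equivariance in Lemma~\ref{lem-r-contract}, and then argue that $r$ along the lifts $p_j$ (equivalently along $[c(t_j)]$, by Lemma~\ref{lem-technique}) tends to zero because the invisible endpoint $z+w$ cannot lie in the developed visible ball of $[c(t_j)]$, so $r([c(t_j)])\leq d_{\mathcal N}(\tilde\gamma(t_j),z+w)\to 0$. Your exposition of this step wanders, but the mathematics is the same.

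For the center $\beta_{ij}\to z+w$, your argument is genuinely different and cleaner than the paper's. The paper computes in coordinates under the simplifying assumption $P=E_n$, writing $\beta=(c,d)$ and solving explicitly from $\gamma_{ij}(a_i,b_i)=(a_j,b_j)$ with the Heisenberg group law. You instead use the metric contraction $d_{\mathcal N}(\beta_{ij},z_j)=\lambda_{ij}\,d_{\mathcal N}(\beta_{ij},z_i)$ (valid because every element of ${\rm Sim}(\mathcal N)$ scales $d_{\mathcal N}$ by its dilatation factor), and the triangle inequality then gives $d_{\mathcal N}(\beta_{ij},z_i)\leq d_{\mathcal N}(z_i,z_j)/(1-\lambda_{ij})\to 0$. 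This is coordinate-free and does not require first reducing to $P=E_n$, so it is a strict improvement.

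For $P_{ij}\to E$, you eventually arrive at the right mechanism but overcomplicate it. The paper simply observes the cocycle relation $\gamma_{jk}\gamma_{ij}=\gamma_{ik}$, hence $P_{ij}=P_{1j}P_{1i}^{-1}$; compactness of ${\rm U}(\mathcal N)$ gives a subsequence with $P_{1i_k}\to P_\infty$, whence $P_{i_ki_l}\to E$. Your detour through renormalized balls is unnecessary, and what you call ``the main obstacle'' is in fact the easiest of the three claims.
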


The idea of the proof consists in taking a closer look to figure \ref{fig-fried}. Suppose for a minute that every $z_i$ is in fact $\tilde \gamma(t_i)$. Then the transformation $\gamma_{ij}$ sends $\tilde \gamma(t_i)$ to $\tilde \gamma(t_j)$.
 It is then clear that the transformation is very contracting and is centered in $z+w$ with no rotation.
It is the object of the proof to show that this approximation is correct.
Note that in practice,  a dilatation $\lambda(x,y) = (\lambda x,\lambda^2 y)$ does not stabilize a geodesic.

\begin{proof}
Assume that $\gamma_{ij}$ is an affine transformation given by
\begin{equation}
\gamma_{ij}(x) = \lambda P(x-\beta) + \beta,
\end{equation}
with $\lambda\in\mathbf{R}, P\in {\rm U}(n-1)$ and $\beta\in\mathcal{N}$  constant. We start by showing that $\beta\to z+w$ if $P=E_n$ and if $\lambda\to 0$.

Denote $(a_i,b_i)=\tilde\gamma(t_i)$ and $(c,d)=\beta$.
By construction,  $\gamma_{ij}$ sends $D(p_i)=z_i$ to $D(p_j)=z_j$.
We will later show   that $z_i$ and $\tilde\gamma(t_i)$ are very close for $t_i$ large enough.

Hence, the additional and final hypothesis is that 
 $\gamma_{ij}$ sends $(a_i,b_i)$ to $(a_j,b_j)$. It gives the following computation.
\begin{align}
(a_j,b_j) &= \lambda ( (a_i,b_i) - \beta) + \beta \\
&= \lambda \left( (a_i-c,b_i-d -{\rm Im}(a_i^*c)) \right) + (c,d) \\
&= \left( \lambda  (a_i-c) + c , \lambda^2(b_i-d -{\rm Im}(a_i^*c)) + d + {\rm Im}\left( \lambda (a_i-c) + c\right)^* c\right) \\
&= \left( (1-\lambda)c + \lambda  a_i, (1-\lambda^2)d + \lambda^2(b_i -{\rm Im}(a_i^*c)) + {\rm Im}\left(\lambda ((a_i-c)^*c\right)\right)
\end{align}
It then suffices to resolve the values of $c$ and $d$.
\begin{align}
a_j &= (1-\lambda)(c) + \lambda a_i\\
c&=(1-\lambda)^{-1}(a_j-\lambda a_i), \\
b_j &=(1-\lambda^2)d + \lambda^2(b_i -{\rm Im}(a_i^*c)) + {\rm Im}\left(\lambda (a_i-c)^*c)\right) \\
d &= (1-\lambda^2)^{-1}\left( b_j - \lambda^2(b_i -{\rm Im}(a_i^*c)) - \lambda{\rm Im}((a_i-c)^*c) \right)
\end{align}
We now use the fact that $(a_i,b_i)$ is given by
\begin{equation}
\exp_z(t_i(a_0,b_0)) = z + (t_ia_0,t_ib_0) = (z_1+t_ia_0, z_2 + t_ib_0 + t_i{\rm Im}(z_1^*b_0)),
\end{equation}
where $w=(a_0,b_0)$ and $z=(z_1,z_2)$.
We now use the hypothesis $\lambda\to 0$, we get $c\to a_j$ and $b\to b_j$ which show that for $t_i,t_j\to 1$ large enough, the point $\beta=(c,d)$ is as close as desired to $z+w$.

\par\vspace{\baselineskip}\noindent
It remains to show that we can indeed suppose that $P=E_n$, $\lambda \to 0$ and that $z_i$ is arbitrary close to $\tilde \gamma(t_i)$.

Since ${\rm U}(\mathcal N)$ is compact and since $\gamma_{jk}\gamma_{ij}=\gamma_{ik}$, the transformation $P$ of $\gamma_{ij}$ accumulates to the identity $E_n$.

To show $\lambda\to 0$, we will use lemma \ref{lem-technique}. First we have by lemma \ref{lem-technique} and \ref{lem-r-contract} and by definition of $[c(t)]$
\begin{equation}
r(p_j)  \leq r([c(t_j)]) + d_{\mathcal N}(z_j,\tilde\gamma(t_j))
\end{equation}
and by lemma \ref{lem-technique} and definition of $d_{\tilde M}$
\begin{equation}
d_\mathcal{N}(z_j,\tilde\gamma(t_j)) < \epsilon \left( r(p_j) + r([c(t_j)]) \right).
\end{equation}
This gives
\begin{equation}
r(p_j) \leq (1+\epsilon) r([c(t_j)]) + \epsilon r(p_j)
\end{equation}
Remark $r(p_j) = r(g_{ij}(p_i)) = \lambda(g_{ij}) r(p_i)$, it gives 
\begin{align}
\lambda(g_{ij}) r(p_i) &\leq  (1+\epsilon) r([c(t_j)]) + \epsilon\lambda(g_{ij}) r(p_i)\\
\lambda(g_{ij}) &\leq \frac{1+\epsilon}{1-\epsilon} \frac{r([c(t_j)])}{r(p_i)}.
\end{align}
Since the numerator tends to $0$, for $i$ fixed we get that $\lambda\to 0$ for $j$ tending to $+\infty$. So this is true for $i,j$ large enough such that $j\gg i$.
\end{proof}

We now recall the construction made for proposition \ref{prop-conv-n}.
\emph{Let $p\in \tilde M$. If for $g\in\pi_1(M)$, $g\cdot p$ is visible from $p$ by a vector in $B_p$, then if a vector $u$ is such that $G(u)\in B_p$, then in fact $p+u$ is visible from $p$, with $G$ given by}
\begin{equation}
G(u) = {\rm d}D^{-1}_{p} (-z + \rho(g)(D(p) +  {\rm d}D_p(u))).
\end{equation}
This allows to extend the set of vectors that can be seen from $p$ further than $B_p$. 
By passing through the developing map (by lemma \ref{lem-dev-exp}), if ${\rm d}D_pG(u)\in {\rm d}D_p\cdot B_p$ then it is true that $G(u)\in B_p$. Therefore if $D(p+G(u))\in D(p+ B_p)$ then $u$ is visible.

We now use the proposition.
We first need to check that $g_{ij}p$ is visible from $p$. We are only interested in the transformations $g_{ij}$ for large $i$ and $j$.
We know that $\gamma_{ij}z$ lies in the image of the ball $D(p+B_p)$, say with a vector $w'$ such that $\gamma_{ij}z=z+w'$. Say $v'$ corresponds to $w'$ by $({\rm d}D)^{-1}$. It follows that  $g_{ij}p$ is visible from $p$ by $v'\in B_p$.

The next proposition is inferred from this discussion.
\begin{proposition}\label{fried-demiespace}
The exponential based in $p$ is well defined on a “half-space” of ${\rm T}_p\tilde M$ given by
\begin{equation}
H_p =  \bigcup_{j \gg i \gg 0} {\rm d} D^{-1}(\gamma_{ij}^{-1}({\rm d} D \cdot  B_p)).\qed
\end{equation}
\end{proposition}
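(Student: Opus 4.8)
The plan is to exhibit a large convex (indeed, affine-half-space-shaped) subset of $V_p$ and invoke Proposition \ref{prop-conv-n} repeatedly. Fix $p$ lying on the incomplete geodesic $\exp_p(tv)$, so that $v\in\partial B_p$ with $p+v$ undefined, and recall the transformations $\gamma_{ij}=\rho(g_{ij})$ constructed above, for which $g_{ij}\cdot p$ is visible from $p$ through a vector in $B_p$ (this was checked in the paragraph preceding the statement). Since $B_p$ is convex and contains $G_{ij}(0)$ for the map $G_{ij}$ exponentially equivariant with $g_{ij}$, Proposition \ref{prop-conv-n} tells us that whenever $G_{ij}(w)\in B_p$ we have $w\in V_p$. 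Passing through the developing map as explained (Lemma \ref{lem-dev-exp}), the condition $G_{ij}(w)\in B_p$ translates into ${\rm d}D_p(w)\in \gamma_{ij}^{-1}({\rm d}D\cdot B_p)$; hence each set ${\rm d}D^{-1}(\gamma_{ij}^{-1}({\rm d}D\cdot B_p))$ consists of visible vectors, and therefore so does their union $H_p$.

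First I would spell out the translation carefully: from $\exp_p(G_{ij}(w))=g_{ij}\cdot\exp_p(w)$ and the injectivity of $D$ on $p+V_p$, one gets $D(p)+{\rm d}D_p(G_{ij}(w))=\rho(g_{ij})(D(p)+{\rm d}D_p(w))$, so ${\rm d}D_p(G_{ij}(w))\in{\rm d}D\cdot B_p$ is exactly the same as ${\rm d}D_p(w)\in\gamma_{ij}^{-1}({\rm d}D\cdot B_p)$. Thus $w\in{\rm d}D^{-1}(\gamma_{ij}^{-1}({\rm d}D\cdot B_p))$ forces $G_{ij}(w)\in B_p$, hence $w\in V_p$ by Proposition \ref{prop-conv-n}. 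This gives ${\rm d}D^{-1}(\gamma_{ij}^{-1}({\rm d}D\cdot B_p))\subset V_p$ for every pair $j\gg i\gg 0$, and the union $H_p\subset V_p$ is immediate.

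Next I would justify the term ``half-space''. Using the previous proposition, for $j\gg i\gg 0$ the map $\gamma_{ij}$ is, up to arbitrarily small error, the affine contraction $x\mapsto \lambda(x-(z+w))+(z+w)$ with $\lambda\to 0$ and orthogonal part $\to E$. Its inverse is the dilation $x\mapsto \lambda^{-1}(x-(z+w))+(z+w)$ centered at $z+w=D(p)+{\rm d}D\cdot v$, a point on the boundary sphere $\partial(D(p+B_p))$; as $\lambda^{-1}\to\infty$, the images $\gamma_{ij}^{-1}({\rm d}D\cdot B_p)$ are balls of radius $\to\infty$ all internally tangent (in the limit) to a common point of the fixed sphere $\partial({\rm d}D\cdot B_p)$, so their union exhausts an open set whose complement is, in the rescaled picture, a ball resting against that point — the analogue of a closed half-space in $\mathcal N$ bounded by the ``horosphere'' through $z+w$. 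The honest statement one proves is simply that $H_p$ is open, nonempty (it contains $B_p$ itself, taking $\gamma_{ij}$ close to identity is not needed — already $\gamma_{ij}^{-1}({\rm d}D\cdot B_p)$ is large), and contained in $V_p$.

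The main obstacle is controlling the geometry of $\gamma_{ij}^{-1}({\rm d}D\cdot B_p)$ uniformly: one must check that the small errors in ``$\gamma_{ij}$ is approximately an honest dilation centered at $z+w$ with factor $\lambda\to 0$'' do not prevent the union over $j\gg i\gg 0$ from actually filling a half-space, rather than merely a growing but bounded region. This requires combining the quantitative estimates from the previous proof — $\lambda(g_{ij})\le\frac{1+\epsilon}{1-\epsilon}\,r([c(t_j)])/r(p_i)$ and $\beta\to z+w$, $P\to E$ — with the fact that a family of balls whose radii tend to infinity while remaining internally tangent to a fixed sphere at a fixed point has union equal to the open half-space determined by the tangent hyperplane at that point. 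The remaining verifications (openness, equivariance of the construction, compatibility with the visible set) are routine given Proposition \ref{prop-conv-n} and Lemma \ref{lem-dev-exp}.
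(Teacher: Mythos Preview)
Your argument that $H_p\subset V_p$ is correct and is exactly the paper's argument: the proposition carries a \qed\ because it is meant to follow immediately from the discussion just above it, namely the verification that $g_{ij}\cdot p$ is visible from $p$ through a vector in $B_p$ together with Proposition~\ref{prop-conv-n} and the passage through $D$ via Lemma~\ref{lem-dev-exp}. Your first two paragraphs reproduce that reasoning faithfully.

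Where you diverge is in your third and fourth paragraphs, where you try to justify the word ``half-space'' by a Euclidean picture of balls of growing radius internally tangent at a fixed boundary point. The paper deliberately does \emph{not} argue this way: the quotation marks around ``half-space'' and the \qed\ signal that the shape of $H_p$ is not part of this proposition. The actual determination of $\partial H_p$ is deferred to the next result, Lemma~\ref{lem-formI}, which is an explicit coordinate computation distinguishing the cases $\beta_1\neq 0$ (vertical affine boundary) and $\beta_1=0$ (horizontal affine boundary). Your tangent-ball heuristic is genuinely problematic in the Heisenberg setting: the dilation $\lambda\cdot(u,I)=(\lambda u,\lambda^2 I)$ is anisotropic, so $\gamma_{ij}^{-1}$ applied to the ball $\{\|u\|^2+\|I\|<1\}$ does not produce a metric ball of radius $\lambda^{-1}$, and the ``union of internally tangent balls of radii $\to\infty$ is a half-space'' picture fails as stated. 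You correctly sense this when you retreat to ``the honest statement one proves'', but the paper's way out is not to control errors in an approximate-dilation picture --- it is to compute $g_\beta(\lambda)(x)$ in coordinates and read off directly which affine condition on $x$ governs membership for small $\lambda$.

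In short: for the proposition itself your proof is the paper's proof; your attempt at the half-space shape is extra, uses the wrong geometric model, and should be replaced by (or deferred to) the coordinate computation of Lemma~\ref{lem-formI}.
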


Remark that in the Euclidean case $\mathbf{F}=\mathbf{R}$ as in Fried's proof, this half-space is given by $\langle u,v\rangle <1$.
We now take a closer look to the form of $H_p$ in the general $\mathcal{N}$ to prove that it is in fact a half-space.

For, we look at the images $\gamma_{ij}^{-1}({\rm d} D\cdot B_p)$. For more clarity, we will suppose that ${\rm d}D\cdot B_p$ is the unit ball centered in $0$ in $\mathcal{N}$ (after taking the exponential based in $0$), denoted by $B$. In other words, $(u,I)\in B$ if and only if $\|u\|^2+\|I\| < 1$.

\begin{lemma}\label{lem-formI}
The boundary $\partial H_p\subset {\rm T}_p\tilde M$ is through the developing map an affine subspace either vertical (meaning that $(u,I)\in A\times {\rm Im}(\mathbf{F})$ with $A$ affine in $\mathbf{F}^{n-2}$) or horizontal (meaning that $(u,I)\in \mathbf{F}^{n-2}\times A$ with $A$ affine in $\mathbf{F}^{n-2}$).
\end{lemma}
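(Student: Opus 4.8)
The plan is to describe the open set $H_p$ explicitly by passing to good coordinates. Conjugating by a similarity of $\mathcal{N}$, I may assume — exactly as is done just before the statement — that, read through $\exp_p$ and ${\rm d}D_p$, the ball ${\rm d}D_p\cdot B_p$ is the model unit ball $B=\{(u,I):\|u\|^2+\|I\|<1\}$ centred at the origin; then $D(p)=0$, and the contraction centre $z+w$ furnished by the preceding proposition is a point $w=(w_1,w_2)\in\partial B$, so $\|w_1\|^2+\|w_2\|=1$. By Proposition~\ref{fried-demiespace}, under this identification $H_p=\bigcup_{j\gg i\gg 0}\gamma_{ij}^{-1}(B)$, and by the preceding proposition $\gamma_{ij}(x)=\lambda_{ij}P_{ij}(x-\beta_{ij})+\beta_{ij}$ with $\lambda_{ij}\to 0$, $P_{ij}\to E_n$ and $\beta_{ij}\to w$ when $j\gg i\gg 0$. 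Since $\gamma_{ij}$ is a similarity of dilation factor $\lambda_{ij}$, each $\gamma_{ij}^{-1}(B)$ is the $d_\mathcal{N}$-ball of radius $\lambda_{ij}^{-1}$ about $\gamma_{ij}^{-1}(0)$; thus $H_p$ is a union of $d_\mathcal{N}$-balls whose radii tend to $+\infty$ and which all pass near $0$, since $\gamma_{ij}(0)\to w\in\partial B$. The whole problem is to identify this union.

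I would do this by computing the pointwise limit of $\|\gamma_{ij}(\cdot)\|_\mathcal{N}$. A point $Y=(Y_1,Y_2)$ lies in $H_p$ precisely when $\|\gamma_{ij}(Y)\|_\mathcal{N}<1$ for some admissible $(i,j)$. Expanding $\gamma_{ij}(Y)$ with the group law of $\mathcal{N}$ and the formula for $\|\cdot\|_\mathcal{N}$, and inserting $\beta_{ij}=w+o(1)$, $P_{ij}=E_n+o(1)$, one should get an estimate of the form
\[ \|\gamma_{ij}(Y)\|_\mathcal{N}^2 = 1 + \lambda_{ij}\,\ell_w(Y) + o(\lambda_{ij}), \]
with $\ell_w$ a fixed $\mathbf{R}$-affine functional, of the shape $\ell_w(Y)=2{\rm Re}(w_1^*Y_1)+\langle w_2,{\rm Im}(w_1^*Y_1)\rangle/\|w_2\|-2\|w_1\|^2$ when $w_2\neq 0$, with the obvious modification otherwise. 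The decisive point is structural: the parabolic dilation $\lambda\cdot(u,I)=(\lambda u,\lambda^2 I)$ weights the central coordinate $I$ by $\lambda^2$, so $Y_2$ enters $\|\gamma_{ij}(Y)\|_\mathcal{N}^2$ only at order $\lambda_{ij}^2$ and disappears from $\ell_w$, which therefore depends on $Y_1$ alone. Hence $\{\ell_w<0\}\subseteq H_p\subseteq\{\ell_w\le 0\}$ and $\partial H_p=\{\ell_w=0\}$. When $w_1\neq 0$ this is a non-trivial affine hyperplane $A$ in the $u$-variable with the $I$-variable unconstrained, i.e. $\partial H_p=A\times{\rm Im}(\mathbf{F})$ — the \emph{vertical} alternative, which for $\mathbf{F}=\mathbf{R}$ collapses to Fried's half-space $\langle w_1,Y_1\rangle<1$. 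When $w_1=0$, so $\|w_2\|=1$, the leading functional $\ell_w$ vanishes identically; one then keeps the next order, where the two graded layers exchange roles, the $u$-variable drops out and the surviving constraint bears on $Y_2$, giving $\partial H_p=\mathbf{F}^{n-2}\times A$ with $A\subset{\rm Im}(\mathbf{F})$ affine — the \emph{horizontal} alternative. Pulling $\partial H_p$ back through ${\rm d}D_p$ and $\exp_p$, both affine in these coordinates, yields the statement inside ${\rm T}_p\tilde M$.

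The main obstacle is the asymptotic bookkeeping. Because $\beta_{ij}\to w$, $P_{ij}\to E_n$ and $\lambda_{ij}\to 0$ happen at once, the limiting sublevel set is sensitive to their relative rates: replacing $\beta_{ij}$ by $w$ outright produces a curved, paraboloid-type boundary, and it is precisely the correction $\beta_{ij}-w$ — to be extracted from the explicit formulas for the centre and dilation factor in the proof of the preceding proposition, together with $r([c(t_j)])\to 0$ coming from Lemma~\ref{lem-r-contract} — that straightens $\partial H_p$ to an affine subspace; one also uses that $H_p$ is a union over \emph{all} admissible $(i,j)$, not a single limit. Two further points must be verified but are harmless: the non-smoothness of $\partial B$ along the equator $\{I=0\}$, and the non-commutativity of $+_\mathcal{N}$ in the quaternionic and octonionic cases — here one uses ${\rm Im}(w_1^*w_1)=0$ and the centrality of the second layer of $\mathfrak{n}$, so the relevant cross terms vanish. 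I expect this error analysis, together with the case split $w_1=0$ versus $w_1\neq 0$, to be the bulk of the proof and the part most prone to slips.
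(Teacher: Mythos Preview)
Your plan is the paper's: normalize ${\rm d}D_p\cdot B_p$ to the unit ball $B$ at $0$, write the contractions as $\lambda(x-\beta)+\beta$ with $\beta\to w\in\partial B$, expand to first order in $\lambda$, and split on $w_1\neq 0$ versus $w_1=0$. The paper adds one simplification you omit: it first rotates by an element of $M$ so that $w_1=(c,0,\dots,0)$ with $c>0$ real, which lightens the coordinate computation.

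Your final paragraph, however, gets the mechanism backwards. You claim that pinning the center at $w$ ``produces a curved, paraboloid-type boundary'' which only the correction $\beta_{ij}-w$ straightens. In fact the paper does exactly what you warn against --- it \emph{fixes} $\beta=w$ and $P=E_n$, sets $g_w(\lambda)(x)=\lambda(x-w)+w$, and computes directly that the first-order condition ``$g_w(\lambda)(x)\in B$ for small $\lambda$'' is already affine; it is precisely your $\ell_w(Y)<0$, and $\bigcup_{\lambda>0}g_w(\lambda)^{-1}(B)$ is already the half-space $\{\ell_w<0\}$. (Each individual $g_w(\lambda)^{-1}(B)$ has curved boundary; it is the union over $\lambda\to 0$ that is flat, with no help from the perturbation.) The $o(1)$ errors $\beta_{ij}-w$ and $P_{ij}-E_n$ perturb only the constant in your expansion --- at worst shifting the hyperplane parallel to itself via $\sup_{i,j}(1-\|\beta_{ij}\|_\mathcal{N}^2)/\lambda_{ij}$ --- but never its direction, which is all the lemma asserts. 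The heavy asymptotic bookkeeping you anticipate, and the attempt to extract rate information on $\beta_{ij}-w$ from the previous proposition, are therefore unnecessary and would be working against yourself.
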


\begin{proof}
From what we know, $g_{ij}$ is of the form
\begin{equation}
g_{ij}(x) = \lambda_{ij} (x - (z+w)) + (z+w)
\end{equation}
with $\lambda_{ij}\to 0$ and a small rotation that had been ignored.
Therefore, we look at the transformations
\begin{equation}
 g_\beta(\lambda)(x) =  \lambda (x-\beta) + \beta
\end{equation}
with $\beta\in\partial B$ the center and $\lambda>0$ the dilatation factor.

The image set for $x$ fixed is 
\begin{equation}
\{\lambda (x-\beta)+\beta \, \mid\, \lambda>0\}
\end{equation}
and this is half a parabola (sometimes degenerated into a straight line). We will examine the intersection of this parabola with $B$ for smalls $\lambda$. If this intersection is non empty then $x$ will be visible from $p$. 
Taking coordinates $x=(x_1,x_2)$ and $\beta=(\beta_1,\beta_2)$,
\begin{align}
g_\beta(\lambda)(x) &= \left(\lambda (x_1-\beta_1), \lambda^2(x_2-\beta_2 + {\rm Im}(x_1^*\beta_1))\right) + (\beta_1,\beta_2) \\
&= \left(\lambda(x_1-\beta_1) + \beta_1, \lambda^2(x_2-\beta_2)  + \beta_2 + \lambda(1+\lambda) {\rm Im}(x_1^*\beta_1)\right).
\end{align}

First, we examine the case $\beta_1\neq 0$. Up to applying a rotation of the subgroup $M$, we can suppose that $\beta_1 = (c,0\dots,0)$ with $c>0$ real.
Denoting $x_1 = (x_1^1,x_1^2,\dots,x_1^{n-1})$, we get
\begin{align}
g_\beta(\lambda)(x) &= \left(  \lambda(x_1-\beta_1) +\beta_1 , \beta_2 - \lambda c{\rm Im}(x_1^1) + \lambda^2(x_2-\beta_2 - c{\rm Im}(x_1^1)) \right) \\
&= \left( \lambda(x_1-\beta_1) + \beta_1, \beta_2 - \lambda c{\rm Im}(x_1^1) + o(\lambda)\right)
\end{align}
When $\lambda\to 0$, the condition $g_\beta(\lambda)(x)\in B$ does not depend on the coordinate $x_2$. Hence $\partial H_p$ is vertical (the condition is affine since it only involves ${\rm Im}(x_1^1)$).

Now we suppose $\beta_1=0$. Hence $\beta_2=\xi$ is unitary. The coordinates of the parabola then are
\begin{equation}
g_\beta(\lambda)(x) = (\lambda x_1, \lambda^2(x_2-\xi) + \xi).
\end{equation}
The norm of the first coordinate tends to $0$ and the norm of the second is smaller than $1$ if ${\rm Re}((x_2-\xi)^*\xi)<0$, \emph{i.e.} ${\rm Re}(x_2^*\xi)<1$. Therefore $\partial H_p$ is horizontal since this condition does not depend on $x_1$ and is affine in $x_2$.
\end{proof}

\begin{corollary}
For each $p\in \tilde M$, $H_p$ is convex.
\end{corollary}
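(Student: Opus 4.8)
The plan is to combine Lemma~\ref{lem-formI} with part~(1) of Proposition~\ref{prop-conv-gen} and the defining formula for $H_p$ from Proposition~\ref{fried-demiespace}. First I would recall that, through the developing map, $H_p$ is the union over $j\gg i\gg 0$ of the sets $\gamma_{ij}^{-1}({\rm d}D\cdot B_p)$; by Lemma~\ref{lem-formI} each boundary $\partial H_p$ is an affine hyperplane, either ``vertical'' or ``horizontal'' in the sense described there. So the set $H_p$ itself (identified via $D$ with a subset of $\mathcal N$ containing the unit ball $B$) is contained in the open half-space bounded by that affine hyperplane — explicitly, in the vertical case it is $\{(x_1,x_2)\mid {\rm Re}(\langle x_1^1,\text{unit}\rangle)<1\}$-type condition, and in the horizontal case it is $\{(x_1,x_2)\mid {\rm Re}(x_2^*\xi)<1\}$. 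The point is that these conditions are affine in the appropriate coordinate and do not involve the remaining coordinate at all.

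Next I would argue that such a half-space is convex \emph{in the sense of $\mathcal N$}, i.e.\ it contains every geodesic segment between two of its points. Recall from the discussion preceding Lemma~\ref{lem-dev-exp} that in the exponential coordinates of $\mathfrak n$ geodesics are straight lines of the form $p+tv+\tfrac12[p,tv]$. A ``horizontal'' condition ${\rm Re}(x_2^*\xi)<1$ is linear in the $x_2=I$ coordinate, and along such a straight-line geodesic the $x_2$-coordinate varies affinely in $t$ (the bracket term is the only nonlinearity and it also lies in ${\rm Im}(\mathbf F)$, depending affinely on $t$ once the endpoints are fixed), so the half-space condition, being affine in $t$, is preserved on the whole segment. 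The ``vertical'' case is even simpler: the condition only involves the $x_1=u$ coordinate, which varies linearly in $t$ along a geodesic, so again the half-space is geodesically convex. Hence $D(H_p)$ is a convex subset of $\mathcal N$ in the sense of the paper, and since $D$ restricted to $p+V_p\supset p+H_p$ is injective (Lemma~\ref{lem-dev-exp}), $H_p$ is a convex open subset of $\tilde M$ in the sense defined just before Lemma~\ref{lem-conv-g}.

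Actually the cleanest route is: $H_p$ is an open subset of $V_p$ on which $D$ is injective with convex image, hence ``convex'' in the sense of the paper; then one checks directly from Proposition~\ref{fried-demiespace} and Proposition~\ref{prop-conv-n} that $H_p = p+V_p$ intersected with (the preimage of) this half-space is exactly the set of vectors visible from $p$ \emph{lying in the half-space}, but since everything visible from $p$ that the $\gamma_{ij}$ reach is captured, and outside the half-space nothing is visible, one has $H_p\subseteq V_p$ and $D(H_p)$ convex, which is all that is needed. I would phrase the proof as: by Lemma~\ref{lem-formI}, $D(H_p)$ is the intersection of $\mathcal N$ with an open affine half-space determined by either a vertical or a horizontal hyperplane; an open affine half-space whose defining linear functional involves only the $u$-coordinates or only the $I$-coordinates is geodesically convex because geodesics of $\mathcal N$ are affine in those coordinates; and $D$ is injective on $p+H_p\subset p+V_p$ by Lemma~\ref{lem-dev-exp}; therefore $H_p$ is convex.

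The main obstacle I anticipate is verifying geodesic convexity of the half-space rigorously: one must be careful that the Campbell--Hausdorff correction $\tfrac12[p,tv]$ is purely imaginary (lands in ${\rm Im}(\mathbf F)$, i.e.\ only affects the $I$-coordinate) and is affine in $t$ for fixed endpoints, so that neither type of half-space condition is destroyed along a geodesic. The vertical case needs no such care, but the horizontal case does, and this is exactly the place where the distinction between the $2$-step nilpotent geometry of $\mathcal N$ and ordinary Euclidean space matters; once one observes that the only nonlinearity of geodesics lives in the ${\rm Im}(\mathbf F)$-factor and is itself affine in the parameter, the argument goes through uniformly for all fields $\mathbf F$, octonions included, since associativity of $\mathbf F$ was already used to make the addition of $\mathcal N$ associative.
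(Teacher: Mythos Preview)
Your core idea matches the paper's, but you have substantially overcomplicated the argument. The paper's proof is two lines: by Lemma~\ref{lem-formI}, ${\rm d}D_p\,\partial H_p$ is an affine subspace of real codimension~$1$ in $\mathfrak n$, hence it separates $\mathfrak n$ into two connected components, each convex, and ${\rm d}D_pH_p$ is one of them.

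The case-by-case verification of geodesic convexity (vertical versus horizontal, tracking the Campbell--Hausdorff correction) is unnecessary. You yourself recall the key fact established earlier in the paper: in the exponential coordinates of $\mathfrak n$, every geodesic of $\mathcal N$ is a straight line, because $\exp(p)\exp(tv)=\exp(p+t(v+\tfrac12[p,v]))$ is affine in $t$. Once this is known, linear convexity in $\mathfrak n$ and geodesic convexity in $\mathcal N$ coincide, so an affine half-space is automatically convex in the required sense --- no need to examine the $u$- and $I$-coordinates separately or to worry about where the bracket term lands. Your third paragraph, attempting to identify $H_p$ with $V_p$ intersected with a half-space, is muddled and not needed for the corollary.

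A minor point of precision: $H_p$ is a subset of ${\rm T}_p\tilde M$, not of $\tilde M$; it is $p+H_p=\exp_p(H_p)$ that lives in $\tilde M$. The convexity being asserted is that of ${\rm d}D_pH_p\subset\mathfrak n$ (equivalently of $H_p$ in the tangent space), which then makes $p+H_p$ convex in the sense defined for subsets of $\tilde M$.
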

\begin{proof}
Indeed, ${\rm d}D_p\partial H_p$ is affine and of real codimension 1, hence it separates $\mathfrak n$ in two connected components, each convex. One of them is ${\rm d}D_pH_p$.
\end{proof}

We now divide $\partial H_p$ onto $W_p\sqcup I_p$, where $W_p$ denotes the visible vectors and $I_p$ the invisible vectors of $\partial H_p$.

\begin{lemma}
The image $D(p)+{\rm d}D_p(I_p)$ is locally constant (hence constant) following $p$, this image is denoted $I$. Furthermore, $I$ is affine.
\end{lemma}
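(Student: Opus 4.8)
The plan is to establish the two assertions in turn: first, that the affine subspace $D(p)+\mathrm{d}D_p(\partial H_p)$ together with the invisible part $I_p$ is essentially independent of $p$ in a strong enough sense that the associated image $D(p)+\mathrm{d}D_p(I_p)$ is locally constant, and second, that this common image $I$ is an affine subset of $\mathcal N$.

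For the local constancy, I would argue as follows. Fix $p\in\tilde M$. The half-space $H_p$ is convex by the preceding corollary, and $D$ restricted to $p+H_p$ is injective by Proposition \ref{prop-conv-gen}(2) applied to the convex pieces covering it (or directly since $H_p$ is convex and $p\in H_p$, so $p+H_p\subset p+V_p$ on the visible part and $D$ is injective there by Lemma \ref{lem-dev-exp}). Now take $q\in p+B_q'$ for a small convex ball; I want to compare $H_p$ and $H_q$ through the developing map. Since $B_q$ and $B_p$ overlap and $D$ identifies the two exponential pictures, the maps $G$ built from the $g_{ij}$ based at $p$ and at $q$ differ only by the local change of basepoint, which is an honest translation in $\mathcal N$ near $D(p)$. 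Concretely, $\mathrm d D_q(\partial H_q)$ and $\mathrm d D_p(\partial H_p)$ are both the boundary of the union $\bigcup_{j\gg i} \gamma_{ij}^{-1}(\text{ball})$, and these unions agree as subsets of $\mathcal N$ once translated back by $D(p)-D(q)$, because the $\gamma_{ij}$ are the same holonomy transformations regardless of basepoint. Hence the affine hyperplane $D(p)+\mathrm d D_p(\partial H_p)\subset\mathcal N$ does not move as $p$ varies locally, and consequently $D(p)+\mathrm d D_p(I_p)$ — the invisible sub-locus of that fixed hyperplane, which is intrinsically defined as the complement of $D$-image of the visible vectors — is locally constant in $p$. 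Since $\tilde M$ is connected, it is globally constant; call it $I$.

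For the affineness of $I$, I would use that each $I_p$ is the invisible part of $\partial H_p$, and invoke the convexity machinery: if $z_1,z_2\in I$ then the geodesic segment of $\mathcal N$ between them lies in the fixed hyperplane $D(p)+\mathrm d D_p(\partial H_p)$ (since geodesics stay in affine subspaces of $\mathfrak n$ by the straight-line description of geodesics in $\mathcal N$), and I claim every point of this segment is again invisible. If some interior point $z_3$ of the segment were visible, then by Proposition \ref{prop-conv-n} — taking a small convex $U\subset V_p$ around the relevant vector — one would be able to propagate visibility along the geodesic and conclude that the endpoints $z_1,z_2$ are visible too, contradicting $z_1,z_2\in I$. (One uses here that $H_p$ is exactly the set of vectors made visible by the $\gamma_{ij}$, and that visibility of an interior point of a segment together with one endpoint forces visibility of the whole closed segment, by the same "the interval $[0,\epsilon)$ is closed in $[0,1]$" argument as in Proposition \ref{prop-conv-n}.) Therefore $I$ is geodesically convex inside a fixed affine hyperplane of $\mathfrak n$, and being closed it is an intersection of the hyperplane with affine half-spaces; combined with the explicit vertical/horizontal description from Lemma \ref{lem-formI} — where $\partial H_p$ is cut out by a single affine equation in one block of coordinates — one reads off that $I$ is itself an affine subspace.

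The main obstacle I anticipate is making rigorous the claim that $D(p)+\mathrm d D_p(\partial H_p)$ genuinely does not depend on $p$, i.e.\ the bookkeeping of basepoints in the definition $H_p=\bigcup_{j\gg i}\mathrm d D^{-1}(\gamma_{ij}^{-1}(\mathrm d D\cdot B_p))$. The transformations $\gamma_{ij}$ were constructed with a specific basepoint $x\in M$ and specific entry/exit times, so one must check that, after transporting everything through $D$ to $\mathcal N$, the resulting family of affine maps — hence the half-space they carve out — is intrinsic to the developing image near $D(p)$ and transforms correctly (by the translation relating $D(p)$ and $D(q)$) when the basepoint moves. Once this equivariance is pinned down, the rest is the routine convexity argument sketched above; the affineness then follows cleanly from Lemma \ref{lem-formI} since the defining condition there is already affine in a single coordinate block.
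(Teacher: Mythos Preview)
Your argument rests on the claim that the whole hyperplane $D(p)+\mathrm{d}D_p(\partial H_p)$ is independent of $p$, but this is neither proved nor true in general. The half-space $H_p$ from Proposition \ref{fried-demiespace} depends on the particular incomplete geodesic $c(t)=\exp_p(tv)$ you start with (hence on a choice of invisible direction $v\in\partial B_p$), on the recurrence point $x\in M$, and on the resulting family $g_{ij}$; none of these are canonical, and for a nearby basepoint $q$ the corresponding construction may well produce a genuinely different hyperplane in $\mathcal N$. Saying ``the $\gamma_{ij}$ are the same holonomy transformations regardless of basepoint'' conflates the ambient group $\Gamma$ with the specific sequence extracted from one chosen incomplete ray.

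The paper exploits exactly this possible non-constancy. It fixes an invisible $v_p\in I_p$ and a visible $u\in W_p$, sets $q=p+u$, and shows that the single point $D(p)+\mathrm{d}D_p(v_p)$ must lie on the (a priori different) hyperplane $D(q)+\mathrm{d}D_q(\partial H_q)$. Two contradictions do the work: if the point lay in the open half-space $D(q+H_q)$, injectivity of $D$ on $(p+H_p)\cup(q+H_q)$ would let the geodesic $p+tv_p$ extend past $t=1$; if it lay strictly outside $\overline{D(q+H_q)}$, one manufactures contracting $\gamma_{ij}$ centred at that very point and forces $D(q)+\mathrm{d}D_q(\overline{B_q})$ to be covered by a convex visible set, contradicting the maximality of $B_q$. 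Thus $D(p)+\mathrm{d}D_p(I_p)$ sits inside the intersection of $\partial H_p$ with each $\partial H_q$, and whenever two of these hyperplanes differ the dimension drops. Iterating over enough $q$'s yields an affine subspace that no longer moves with $p$: local constancy and affineness arrive together.

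Your separate argument for affineness also runs in the wrong direction. Proposition \ref{prop-conv-n} pulls visibility \emph{back} (if $G(w)$ lands in a convex visible set then $w$ is visible), it does not push visibility outward along a segment; so a visible interior point of a geodesic in $\partial H_p$ gives no leverage on the endpoints. The obstacle you anticipate in your last paragraph is therefore not a bookkeeping issue but the actual content of the lemma.
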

\begin{proof}
If $W_p=\emptyset$, then $I_p=\partial H_p$ is affine and its image by the developing map must be constant since $p+V_p = \tilde M$ is convex (because $H_p$ is convex and equal to $V_p$ if $\partial H_p$ is only constituted of invisible vectors, see proposition \ref{prop-conv-gen}).

Let $v_p\in I_p$. Suppose that $u\in W_p$. Let $q = p+u$. 
The point $q$ has a half-space $H_q$. We show that $D(p)+{\rm d}D_p(v_p)$  belongs to $D(q)+{\rm d}D_q(\partial H_q)$. This shows that $D(p)+{\rm d}D_p(I_q)$ is contained in the intersection of $D(p) + {\rm d}D_p(\partial H_p)$ with $D(q)+{\rm d}D_q(\partial H_q)$. Since $D(q)+{\rm d}D_q(\partial H_q)\subsetneq D(p)+{\rm d}D_p(\partial H_p)$, such an intersection decreases the topological dimension. By repeating the argument for a new $u$, the image of $I_p$  becomes constant following $p$ and affine. 

Since $p+H_p$ and $q+H_q$ are convex and with non empty intersection, it follows that $D$ is injective on $(p+H_p)\cup (q+H_q)$. Suppose that $D(p)+{\rm d}D_p(v_p)$ lies in $D(q+H_q)$, then this is locally true. Therefore, $D(c(t))=D(p+tv_p)$ is contained in $D(q+H_q)$ for $t\in ]T,1[$. By injectivity of $D$, this shows that $c(t)=p+tv_p$ is contained in $q+H_q$ for $t\in ]T,1[$ and this geodesic is defined for $t=1$ by hypothesis. But this contradicts that $c(1)$ is not defined. This shows that $D(p)+{\rm d}D_p(v_p)$ does not belong to $D(q+H_q)$.

Take $p'$ in $p+H_p$ such that $D(p)+{\rm d}D_p(v_p)$ belongs to $\partial D(p'+B_{p'})$. Then there exists $\rho(g_{ij})$ centered in $D(p)+{\rm d}D_p(v_p)$ very contracting with almost no rotation for $i,j$ large enough.

If $D(p)+{\rm d}D_p(v_p)$ does not belong to $D(q)+{\rm d}D_q(\partial H_q)$, then this last set  has no fixed point under $\rho(g_{ij})$. Therefore, $D(g_{ij}(q+H_q))$ is convex and contains $D(q)+{\rm d}D_q(\overline{B_q})$. But $g_{ij}(q+H_q)$ and $q+H_q$ intersect, hence $D$ is injective on the union.
If $c(t)\in q+B_q$ is a geodesic such that $c(0)=q$ and $c(1)$ is not defined, then this shows that $c(t)$ is well defined in $t=1$, since it is in $g_{ij}(q+H_q)$, absurd.
\end{proof}

Now the end of Fried's theorem's proof consists in showing that the holonomy group $\Gamma$ is discrete (compare with \ref{prop-j-group} and with \cite{Matsumoto}).

Since $I$ is constant, it follows that $D(\tilde M)$ does not intersect $I$ (since the exponential of a point is always locally defined and $I$ is the boundary of every $H_p$). Since $I$ is non empty by construction, by the proposition \ref{prop-deuxpoints}, it follows that the developing map is a covering onto its image, which is $\mathcal{N}-I$.

The main argument is the following lemma, which proof can already be found in the proof of \ref{prop-deuxpoints}.

\begin{lemma}
If there exist $f,g\in {\rm Sim}(N)$ such that $\lambda(f)\neq 1$ and the fixed point of $f$ is not fixed by $g$, then $\langle f,g\rangle$ is not a discrete subgroup of ${\rm Sim}(\mathcal{N})$.
\end{lemma}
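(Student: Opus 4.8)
The plan is to mimic the accumulation argument already used in the proof of Proposition \ref{prop-j-group}, but now applied to the two generators $f,g$ directly. Write $f(x) = \lambda P(x) + c$ with $\lambda := \lambda(f) \neq 1$; after conjugating by a translation we may assume the fixed point of $f$ is $0$, so $c = 0$ and $f(x) = \lambda P(x)$ with $P \in {\rm U}(\mathcal N)$. By replacing $f$ with $f^{-1}$ if necessary we may assume $\lambda < 1$, so that $f^n(y) \to 0$ for every $y \in \mathcal N$. Write $g(x) = \mu Q(x) + d$. By hypothesis $g(0) \neq 0$, i.e. $d \neq 0$.

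First I would form the conjugates $h_n := f^n \circ g \circ f^{-n}$. Since $g$ fixes $g(0)$—wait, $g$ does not in general fix a point, so instead I would track a cleverly chosen element: set $h := g$ if $\mu \neq 1$ (so that $g$ has a fixed point $a$ with $g(a)=a$), and otherwise replace $g$ by a suitable conjugate $g' = f \circ g \circ f^{-1} \circ g^{-1}$ or similar to manufacture an element with a fixed point distinct from $0$. The cleanest route, matching the excerpt, is: if $\mu = 1$ then $h_0 := g$ is a nontrivial isometry (a "screw translation") with no fixed point, and one checks directly that $f^n \circ g \circ f^{-n}$ converges to the identity while never equalling it, contradicting discreteness; if $\mu \neq 1$, let $a$ be the fixed point of $g$, which is $\neq 0$ precisely when... hmm, actually the hypothesis is only that $g$ does not fix $0$, which in the $\mu\neq 1$ case means $a \neq 0$. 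Then $h_n := f^n \circ g \circ f^{-n}$ fixes $f^n(a) \to 0$.

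Next I would compute $h_n$ explicitly. One gets $h_n(x) = \lambda^n P^n \mu Q P^{-n} \lambda^{-n}(x) + c_n$ for some translation part $c_n \in \mathcal N$; using the commutation of dilations with rotations and the dilation law $\lambda(u,I) = (\lambda u, \lambda^2 I)$, the dilation factor of $h_n$ stays equal to $\mu$, while the rotational part is $P^n Q P^{-n} \in {\rm U}(\mathcal N)$, which lies in a compact group. Passing to a subsequence, $P^{n_k} Q P^{-n_k} \to R \in {\rm U}(\mathcal N)$. Meanwhile the fixed point of $h_{n_k}$ is $f^{n_k}(a) \to 0$, which pins down $c_{n_k} \to c_\infty$ determined by $R, \mu$ and the fixed point $0$; since $\mu \neq 1$ this fixed-point equation has a unique solution, forcing $c_{n_k}$ to converge. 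Hence $h_{n_k}$ converges in ${\rm Sim}(\mathcal N)$ to some nontrivial $h_\infty$, and the $h_{n_k}$ are eventually all distinct (because their fixed points $f^{n_k}(a)$ are distinct for distinct $n_k$, as $f$ has infinite order on $\mathcal N \setminus \{0\}$). This is an infinite set of distinct elements of $\langle f, g\rangle$ accumulating in ${\rm Sim}(\mathcal N)$, so $\langle f, g\rangle$ is not discrete. In the remaining case $\mu = 1$, the same computation shows $h_n(x) = P^n Q P^{-n}(x) + \lambda^n(\cdots)$, so the dilation factor is $1$, the rotational part accumulates by compactness, and the translation part tends to $0$; again one extracts a convergent subsequence of distinct elements.

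The main obstacle is the bookkeeping of the translation parts $c_n$ in the noncommutative group $\mathcal N$—one must verify that "fixed point tends to $0$" genuinely forces $c_{n_k}$ to converge (rather than merely being bounded), which relies on the fact that for $\mu \neq 1$ the affine map $x \mapsto \mu R(x) + c$ has its fixed point depending continuously and injectively on $c$. In the borderline case $\mu = 1$ one must instead argue that the $h_n$ do not eventually stabilize, which follows since $f$ acts freely on $\mathcal N \setminus\{0\}$ so the fixed-point sets (or, when there are none, the translation parts) of the $h_n$ are genuinely distinct. Both points are routine given Lemma \ref{kan} and the explicit description of ${\rm Sim}(\mathcal N)$, so I would keep these verifications brief.
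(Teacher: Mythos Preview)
Your approach is essentially the one the paper uses: conjugate by powers of $f$, extract a convergent subsequence of rotational parts using compactness of ${\rm U}(\mathcal N)$, and pin down convergence of the translation part via the fixed points. The paper in fact just refers back to the argument of Proposition \ref{prop-j-group}. One small difference: there the element conjugated by $f^n$ is $h := g\circ f\circ g^{-1}$ rather than $g$ itself. Since $\lambda(h)=\lambda(f)\neq 1$, this $h$ automatically has a fixed point (namely $g(a)$, where $a$ is the fixed point of $f$, and $g(a)\neq a$ by hypothesis), so the case split on whether $\mu=\lambda(g)$ equals $1$ is avoided entirely. Your route is correct, but this trick makes the bookkeeping uniform.
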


Such applications are given by the various $g_{ij}$ by changing the base point $p$. If $I$ is not a single point, then two such maps $f,g$ exist. We will show that it does not occur.
If $\mathcal{N}-I$ is simply connected, then the holonomy group must be discrete, which contradicts the preceding lemma.

So we suppose, that $\mathcal{N}-I$ is not simply connected. This case occurs if $I$ is an affine subspace of real codimension 2. Let $H$ be an affine half-space with $\partial H=I$. By turning $H$ around $I$, we find that the universal cover of $\mathcal{N}-I$ is $H\times\mathbf{R}$.

Since $I$ is invariant by the holonomy group $\Gamma$, and since $\Gamma$ contains contractions (such as $g_{ij}$) it follows that $I$ is stable by dilatations.
The subgroup of ${\rm Sim}(\mathcal{N})$ stabilizing $I$ contains dilatations $f,g$ with different fixed points. The lifted group ${\rm Sim}(I)\times \mathbf{R}$ contains $\tilde f,\tilde g$ (essentially $\tilde f \simeq f\times \{0\}$ since $f= g_{ij}$ does not rotate much around $I$) again contracting and with different fixed points.

We now have a lifting of $(G,X)$-structures
\begin{equation}
({\rm Sim}(I)\times \mathbf{R},H\times \mathbf{R})\to ({\rm Sim}(I),\mathcal{N}-I) 
\end{equation}
and $M$ gets a lifted $({\rm Sim}(I)\times \mathbf{R},H\times \mathbf{R})$-structure. The developing map is given by the choice of a point: take $p\in \tilde M$ and $D(p)\in \mathcal N-I$, we have to choose $q\in H\times \mathbf R$ such that $q$ is send to $D(p)$ by the covering $H\times \mathbf R \to \mathcal N-I$, denote $q$ by $D'(p)$. The new developing map $D'$ is now fully prescribed by $D$ and $D'(p)$. Again, $D'$ is a covering map since $D$ is a covering map. For, take $q(t)$ a path in $H\times \mathbf R$, based in $q(0)=D'(p)$. It is sent to a path in $\mathcal N-I$ which is covered by a path $\tilde q(t)$ in $\tilde M$ since $D$ is a covering map, and $D'(\tilde q(t))=q(t)$ by local triviality.

Since $H\times \mathbf{R}$ is simply connected, the new holonomy group $\Gamma'$ must be discrete. But this is again contradicted by the preceding lemma  (slightly adapted).

\par\vspace{\baselineskip}\noindent
This concludes  Fried's theorem's proof since $I$ must be constituted of a single point (which must be totally fixed by $\Gamma$). Also by the proposition \ref{prop-deuxpoints} the rest of the theorem is shown.\qed

\section{Complete structures on closed manifolds}

The aim of this final section is to show the following theorem. This is more classical and not too difficult once Fried's theorem is given.

\begin{theorem}\label{thm}
Let $M$ be a closed $({\rm PU}(n,1),\partial \mathbf{H}^n)$-manifold. 
If $D$ is not surjective then it is a covering onto its image. Furthermore,
 $D$ is a covering on its image if, and only if, $D(\tilde M)$ is equal to a connected component of $\partial \mathbf{H}^n-L(\Gamma)$.
\end{theorem}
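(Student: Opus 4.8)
The plan is to run a case analysis on the number of points of the limit set $L(\Gamma)$, reducing in each case to a result already at hand: Lemma~\ref{lem-gamma-eq} when $L(\Gamma)=\emptyset$, Fried's theorem~\ref{thm-fried} when $L(\Gamma)$ is a single point, and the cutting lemma, Proposition~\ref{prop-deuxpoints}, when $L(\Gamma)$ has at least two points. The bridge between the hypothesis and those statements is the assertion that $L(\Gamma)\cap D(\tilde M)=\emptyset$, which I would establish first. Set $\Omega=\partial\mathbf{H}^n-D(\tilde M)$; it is nonempty by non-surjectivity, closed because $D$ is an open map, and $\Gamma$-invariant by the $\rho$-equivariance of $D$. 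If $\Omega$ has at least two points, then $L(\Gamma)\subset\Omega$ by minimality of the limit set among closed $\Gamma$-invariant sets with at least two points. Otherwise $\Omega=\{y\}$ is a single point, necessarily fixed by $\Gamma$; conjugating $y$ to $\infty$, $M$ becomes a closed $({\rm Sim}(\mathcal{N}),\mathcal{N})$-manifold whose developing map is onto $\mathcal{N}$, so by Fried's theorem~\ref{thm-fried} the map $D$ must be a covering of $\mathcal{N}$ (its other alternative produces the image $\mathcal{N}-\{a\}\subsetneq\mathcal{N}$). Then $\Gamma$ acts freely, properly discontinuously and cocompactly on $\mathcal{N}$, which is diffeomorphic to a vector space; since every element of ${\rm Sim}(\mathcal{N})$ of dilatation factor different from $1$ has a fixed point in $\mathcal{N}$ and hence cannot lie in a properly discontinuous group, $\Gamma$ consists of isometries of $\mathcal{N}$, all of which fix $\infty$ and preserve every horosphere of $\mathbf{H}^n$ based at $\infty$; therefore $L(\Gamma)\subset\{\infty\}=\{y\}=\Omega$. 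Thus $L(\Gamma)\subset\Omega$ in every case.

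Granting this, the first statement follows. If $L(\Gamma)=\emptyset$, then Lemma~\ref{lem-gamma-eq} makes $D$ a covering onto $\partial\mathbf{H}^n$, contradicting non-surjectivity, so this case does not occur. If $L(\Gamma)$ has at least two points, then $L(\Gamma)\subset\Omega$ and Proposition~\ref{prop-deuxpoints} yields at once that $D$ is a covering onto a connected component of $\partial\mathbf{H}^n-L(\Gamma)$. If $L(\Gamma)=\{x\}$, then $x\in\Omega$, so after conjugating $x$ to $\infty$ we get a closed $({\rm Sim}(\mathcal{N}),\mathcal{N})$-manifold with $D(\tilde M)\subset\mathcal{N}$; in the alternative conclusion of Fried's theorem~\ref{thm-fried} the group $\Gamma$ would fix a point of $\mathcal{N}$ as well as $\infty$, and a subgroup of ${\rm PU}(n,1)$ fixing two points of $\partial\mathbf{H}^n$ has limit set either empty or equal to those two points, which is incompatible with $L(\Gamma)=\{x\}$; hence $D$ is a covering onto $\mathcal{N}=\partial\mathbf{H}^n-L(\Gamma)$, a connected component. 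In all cases $D$ is a covering onto its image and $D(\tilde M)$ is a connected component of $\partial\mathbf{H}^n-L(\Gamma)$.

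This proves one direction of the equivalence. Conversely, suppose $D(\tilde M)$ is a connected component of $\partial\mathbf{H}^n-L(\Gamma)$: if $L(\Gamma)\neq\emptyset$ then $D$ is not surjective and the previous paragraph applies, while if $L(\Gamma)=\emptyset$ then $D(\tilde M)=\partial\mathbf{H}^n$ and Lemma~\ref{lem-gamma-eq} gives that $D$ is a covering. There remains the case where $D$ is a covering onto its image and is surjective, i.e.\ a covering $\tilde M\to\partial\mathbf{H}^n$: when $\partial\mathbf{H}^n$ is simply connected this forces $D$ to be a diffeomorphism, so $\tilde M$ is a compact sphere, $\pi_1(M)$ is finite, $\Gamma$ is finite and $L(\Gamma)=\emptyset$, whence $D(\tilde M)=\partial\mathbf{H}^n$ is the unique connected component of $\partial\mathbf{H}^n-L(\Gamma)$; the remaining low-dimensional case $\mathbf{F}=\mathbf{R}$, $n=2$, where $\partial\mathbf{H}^n\simeq S^1$ and $M\simeq S^1$, is settled by a rotation-number argument showing that the holonomy generator must be elliptic (or trivial), so again $L(\Gamma)=\emptyset$.

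The step I expect to be the main obstacle is the claim $L(\Gamma)\cap D(\tilde M)=\emptyset$, and precisely the sub-case in which $\Omega$ is a single point: there one genuinely has to bootstrap through Fried's theorem~\ref{thm-fried} to promote $D$ to a covering of $\mathcal{N}$ and only afterwards exclude dilatations in $\Gamma$ by a fixed-point and proper-discontinuity argument. One must also take care throughout with the degenerate configurations (empty, or one-point, limit set, and the circle boundary), since the cutting lemma is silent when $L(\Gamma)$ has fewer than two points.
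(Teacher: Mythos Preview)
Your proof is correct and follows essentially the same route as the paper: a case analysis feeding into Lemma~\ref{lem-gamma-eq}, Fried's theorem~\ref{thm-fried}, and Proposition~\ref{prop-deuxpoints}. The paper organizes the non-surjective part by first splitting on $|\Omega|$ (if $\Omega$ is a single point, apply Fried directly; if $|\Omega|\geq 2$, use minimality to get $L(\Gamma)\subset\Omega$ and then split on $|L(\Gamma)|$), whereas you first establish $L(\Gamma)\subset\Omega$ in all cases and only then split on $|L(\Gamma)|$. Your organization forces you, in the $\Omega=\{y\}$ sub-case, to go through the proper-discontinuity argument (no dilatations, hence $\Gamma\subset {\rm Isom}(\mathcal N)$, hence horospheres are preserved, hence $L(\Gamma)\subset\{\infty\}$) just to feed back into the $|L(\Gamma)|=1$ branch, where you re-apply Fried anyway. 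The paper's ordering avoids that detour: once $\Omega=\{y\}$, Fried already gives the covering and one moves on. Your argument is not wrong, just slightly longer.

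Two places where you are in fact more careful than the paper: you spell out why the incomplete alternative of Fried is incompatible with $|L(\Gamma)|=1$ (a subgroup fixing two boundary points sits in $MA$, so its limit set is either empty or both points), which the paper leaves implicit; and you flag the $\mathbf F=\mathbf R$, $n=2$ case where $\partial\mathbf H^n\simeq S^1$ is not simply connected, so a surjective covering $D$ is not automatically a diffeomorphism. Your rotation-number sketch there is valid, and one can also argue directly: if $\rho(g)$ had a fixed point on $S^1$, equivariance would force $D$ to have period $1$ under the $\pi_1(M)$-action, whence $\rho(g)=\mathrm{id}$; so $\rho(g)$ is elliptic or trivial and $L(\Gamma)=\emptyset$.
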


The proof of this theorem is cut into two parts.

\begin{proposition}\label{prop-nonsurj}
Let $M$ be a closed $({\rm PU}(n,1),\partial \mathbf{H}^n)$-manifold.
If the developing map is not surjective, then it is a covering map on its image.
\end{proposition}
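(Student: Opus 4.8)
The plan is to analyse the complement $\Omega = \partial\mathbf{H}^n - D(\tilde M)$. Since $D$ is a local diffeomorphism its image is open, so $\Omega$ is closed; it is invariant under the holonomy group $\Gamma = \rho(\pi_1(M))$ because $D$ is equivariant; and it is nonempty precisely by the non-surjectivity hypothesis. I would then split into cases according to the cardinality of $\Omega$ and of the limit set $L(\Gamma)$, reducing each case either to Fried's theorem \ref{thm-fried} or to the cutting lemma, Proposition \ref{prop-deuxpoints}.

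First I would handle the case where $\Omega = \{\xi\}$ is a single point. Then $\Gamma$ fixes $\xi$, so after conjugating by an element of ${\rm PU}(n,1)$ carrying $\xi$ to the point $1\in\partial\mathbf{H}^n$ (and replacing $D$ by its composite with that element), I may assume $\Gamma\subset{\rm Stab}(1) = {\rm Sim}(\mathcal{N})$, so that $M$ becomes a closed $({\rm Sim}(\mathcal{N}),\mathcal{N})$-manifold whose developing map $D:\tilde M\to\partial\mathbf{H}^n-\{1\}=\mathcal{N}$ is moreover surjective. Fried's theorem \ref{thm-fried} then gives that $D$ is a covering onto $\mathcal{N}$: the only other alternative it allows, namely that $\Gamma$ fixes some $a\in\mathcal{N}$ and $D$ covers $\mathcal{N}-\{a\}$, is incompatible with $D$ being onto $\mathcal{N}$. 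This is the desired conclusion.

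Next I would handle the case $|\Omega|\geq 2$. Since $\Omega$ is closed, $\Gamma$-invariant and has at least two points, minimality of the limit set among such sets gives $L(\Gamma)\subset\Omega$. If $L(\Gamma)=\emptyset$ then Lemma \ref{lem-gamma-eq} makes $D$ a covering onto $\partial\mathbf{H}^n$, contradicting $\Omega\neq\emptyset$; so this does not occur. If $|L(\Gamma)|\geq 2$, then $L(\Gamma)\subset\Omega$ is exactly the hypothesis of Proposition \ref{prop-deuxpoints}, which yields that $D$ is a covering onto a connected component of $\partial\mathbf{H}^n-L(\Gamma)$. If $L(\Gamma)=\{\eta\}$, then $\Gamma$ fixes $\eta$ and $\eta\in L(\Gamma)\subset\Omega$, so $\eta\notin D(\tilde M)$; conjugating $\eta$ to $1$ as before, $M$ becomes a closed $({\rm Sim}(\mathcal{N}),\mathcal{N})$-manifold, this time with $D:\tilde M\to\mathcal{N}$ \emph{not} surjective, because $\Omega$ contains a point other than $\eta$. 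Fried's theorem \ref{thm-fried} then gives that $\Gamma$ fixes some $a\in\mathcal{N}$ and $D$ is a covering onto $\mathcal{N}-\{a\}=\partial\mathbf{H}^n-\{\eta,a\}=D(\tilde M)$.

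I do not expect a deep obstacle here: the proposition is a bookkeeping reduction to the two results already established. The points that need care are: (i) identifying the stabilizer in ${\rm PU}(n,1)$ of a boundary point with (a conjugate of) ${\rm Sim}(\mathcal{N})$, and checking that a $({\rm PU}(n,1),\partial\mathbf{H}^n)$-structure whose developing image omits that point genuinely restricts to a $({\rm Sim}(\mathcal{N}),\mathcal{N})$-structure with the same developing map and holonomy; (ii) the inclusion $L(\Gamma)\subset\Omega$ when $|\Omega|\geq 2$, which uses that $L(\Gamma)$ is the smallest closed $\Gamma$-invariant subset of $\partial\mathbf{H}^n$ with at least two points; and (iii) verifying in each branch that the hypotheses of Fried's theorem or of Proposition \ref{prop-deuxpoints} are met and that the alternative they provide is ruled out by the standing assumptions. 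No special treatment of the octonionic field is required, all the cited ingredients being uniform in $\mathbf{F}$.
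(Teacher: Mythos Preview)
Your proof is correct and follows essentially the same case analysis as the paper: split on $|\Omega|$ and then on $|L(\Gamma)|$, reducing to Fried's theorem \ref{thm-fried} or to Proposition \ref{prop-deuxpoints}. The one small difference is in the sub-case $|\Omega|\geq 2$, $L(\Gamma)=\{\eta\}$: the paper argues this situation is impossible (since if the second alternative of Fried's theorem held, $\Gamma$ would fix two boundary points and then $L(\Gamma)$ would be empty or contain both of them, not just $\{\eta\}$), whereas you simply let Fried's theorem run and conclude that $D$ covers $\mathcal{N}-\{a\}=D(\tilde M)$. Both routes are valid for the proposition; yours is slightly more self-contained, while the paper's extracts the extra information that this branch is vacuous.
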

\begin{proof}
Denote $\Omega = \partial\mathbf{H}^n-D(\tilde M)$.
\begin{itemize}
\item If $\Omega=\{a\}$, then up to conjugation, we can suppose that $\Omega=\{\infty\}$ and $D(\tilde M)\subset \mathcal{N}$. Since $\Omega$ is fixed by the holonomy group, we get a similarity structure $({\rm Sim}(\mathcal{N}),\mathcal{N})$ on $M$. Since $M$ is closed, it verifies the hypotheses of Fried's theorem and hence must be complete. Otherwise, another point would be missed by the developing map.
\item Suppose $\{a\}\subsetneq\Omega$. Then by minimality $L(\Gamma)\subset \Omega$.
If $L(\Gamma)=\emptyset$ then by lemma \ref{lem-gamma-eq}, the developing map is a covering onto the full $\partial\mathbf{H}^n$, absurd. Hence $L(\Gamma)$ consists of one or at least two points. If $L(\Gamma)=\{a\}$ then we can suppose it is $\infty$ and $D(\tilde M)\subset \mathcal{N}$. Again, by Fried's theorem and since $L(\Gamma)=\{\infty\}$, the developing map is in fact a covering onto $\mathcal{N}$, absurd. Hence, $L(\Gamma)$ consists of at least two points and is contained in $\Omega$. The conclusion follows from proposition \ref{prop-deuxpoints}.\qedhere
\end{itemize}
\end{proof}

\begin{proposition}\label{prop-complim}
Let $M$ be a closed $({\rm PU}(n,1),\partial \mathbf{H}^n)$-manifold.
The developing map is a covering on its image if, and only if, $D(\tilde M)$ is equal to a connected component of $\partial \mathbf{H}^n-L(\Gamma)$.
\end{proposition}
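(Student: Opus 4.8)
The plan is to prove both implications by a case analysis on the size of the limit set, feeding each case to one of the three results already available: Lemma~\ref{lem-gamma-eq} when $L(\Gamma)=\emptyset$, Fried's theorem~\ref{thm-fried} together with Proposition~\ref{prop-j-group} when $L(\Gamma)$ is a single point, and the cutting lemma Proposition~\ref{prop-deuxpoints} when $L(\Gamma)$ has at least two points. Throughout I write $\Omega=\partial\mathbf{H}^n-D(\tilde M)$; this set is closed and $\Gamma$-invariant, so as soon as it has at least two points, minimality of the limit set forces $L(\Gamma)\subset\Omega$.

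For the implication from right to left, assume $D(\tilde M)=C$ for a connected component $C$ of $\partial\mathbf{H}^n-L(\Gamma)$; then $C\cap L(\Gamma)=\emptyset$, i.e. $L(\Gamma)\subset\Omega$. If $L(\Gamma)=\emptyset$, then $C=\partial\mathbf{H}^n$ and Lemma~\ref{lem-gamma-eq} gives that $D$ is a covering. If $L(\Gamma)$ is a single point $\{a\}$, then $\partial\mathbf{H}^n-\{a\}$ is already connected, so $C=\partial\mathbf{H}^n-\{a\}$; conjugating $a$ to $\infty$, the holonomy fixes $\infty$, $M$ carries a $({\rm Sim}(\mathcal N),\mathcal N)$-structure with $D(\tilde M)=\mathcal N$, and Fried's theorem~\ref{thm-fried} forces $D$ to be a covering onto $\mathcal N=C$, since its alternative conclusion would make $D(\tilde M)$ a proper subset of $\mathcal N$. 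If $L(\Gamma)$ has at least two points, then $L(\Gamma)\subset\Omega$ and Proposition~\ref{prop-deuxpoints} applies directly, giving that $D$ is a covering onto its image $C$.

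For the implication from left to right, assume $D$ is a covering onto $\Omega':=D(\tilde M)$. If $D$ is surjective, then $\Omega'=\partial\mathbf{H}^n$ and $L(\Gamma)=\emptyset$ (when $\partial\mathbf{H}^n$ is simply connected this is because $D$ is then a diffeomorphism and $\Gamma\cong\pi_1(M)$ is finite), so $\partial\mathbf{H}^n-L(\Gamma)=\partial\mathbf{H}^n=\Omega'$ is its own unique component. If $D$ is not surjective and $\Omega$ is a single point $\{a\}$, then $\Omega'=\partial\mathbf{H}^n-\{a\}$, which after conjugating $a$ to $\infty$ is $\mathcal N$, and it only remains to compute $L(\Gamma)$, the delicate point discussed below. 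If $|\Omega|\ge 2$, then $L(\Gamma)\subset\Omega$; here the case $L(\Gamma)=\emptyset$ contradicts Lemma~\ref{lem-gamma-eq} (it would make $D$ surjective), the case where $L(\Gamma)$ has at least two points is settled by Proposition~\ref{prop-deuxpoints}, whose image component must coincide with $\Omega'$, and the case $L(\Gamma)=\{a\}$ is impossible: conjugating $a$ to $\infty$ gives a $({\rm Sim}(\mathcal N),\mathcal N)$-structure on the closed manifold $M$, so $\Gamma$ is discrete (the holonomy of a closed similarity manifold is discrete, by Fried's theorem~\ref{thm-fried}), and since $\Omega'$ is then a proper subset of $\mathcal N$, Fried's theorem~\ref{thm-fried} gives $\Gamma$ fixing a second point $b\in\mathcal N$, so $\Gamma$ fixes the two distinct boundary points $\infty$ and $b$, hence lies in the compact group of rotations about the geodesic joining them, hence is finite, forcing $L(\Gamma)=\emptyset$ — a contradiction.

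The step I expect to be the main obstacle is the identification $L(\Gamma)=\{a\}$ in the forward direction when $\Omega'=D(\tilde M)=\partial\mathbf{H}^n-\{a\}=:\mathcal N$. There $D$ is a covering onto the simply connected $\mathcal N$, hence a diffeomorphism, so $\Gamma$ acts freely, properly discontinuously and cocompactly on $\mathcal N$, with $\Gamma\subset{\rm Sim}(\mathcal N)$ fixing $\infty=a$. The limit set is nonempty, for otherwise Lemma~\ref{cg-lem-fixpt} would conjugate $\Gamma$ into the compact group $K$, making $\Gamma$ finite and $M=\mathcal N/\Gamma$ non-compact. By Proposition~\ref{prop-j-group}, $L(\Gamma)$ is then $\{\infty\}$ or $\{\infty,b\}$, and the second case is excluded: a limit point $b\in\mathcal N$ would come with a sequence $g_n\to\infty$ in $\Gamma$ and $g_nx_0\to b$ for a suitable $x_0\in\mathcal N$ distinct from the dual point, giving (by freeness) infinitely many distinct points of the orbit $\Gamma x_0$ accumulating inside $\mathcal N$, contradicting proper discontinuity. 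Hence $L(\Gamma)=\{\infty\}=\{a\}$, $\partial\mathbf{H}^n-L(\Gamma)=\mathcal N=\Omega'$ is connected and equal to its unique component, and the proof is complete.
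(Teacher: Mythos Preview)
Your argument follows the same case analysis as the paper's proof --- splitting on $|L(\Gamma)|$ for one implication and on $|\Omega|$ for the other, and feeding each case to Lemma~\ref{lem-gamma-eq}, Fried's theorem~\ref{thm-fried}, or Proposition~\ref{prop-deuxpoints} --- and is mostly correct; in the subcase $\Omega=\{a\}$ you even supply more detail than the paper does, via Proposition~\ref{prop-j-group} and proper discontinuity.

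There is, however, one genuine error. In the forward direction with $|\Omega|\ge 2$, when ruling out $L(\Gamma)=\{a\}$, you reach the situation where $\Gamma$ fixes the two distinct boundary points $\infty$ and $b$, and then assert that $\Gamma$ ``lies in the compact group of rotations about the geodesic joining them, hence is finite''. This is false: the pointwise stabilizer in ${\rm PU}(n,1)$ of two boundary points is $MA$ from the Iwasawa decomposition, and the one-parameter group $A$ of dilations is non-compact. The repair is short: once $\Gamma\subset MA$, either some element has dilation factor $\lambda\ne 1$, and iterating it gives $\{\infty,b\}\subset L(\Gamma)$, or every element has $\lambda=1$, so $\Gamma\subset M$ is relatively compact and $L(\Gamma)=\emptyset$; in either case $L(\Gamma)\ne\{\infty\}$, which is the desired contradiction. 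The paper's own proof is terser here: it simply observes that if $L(\Gamma)=\{a\}$ then Fried's theorem forces the similarity structure to be complete, hence $\Omega=\{a\}$, contradicting $|\Omega|\ge 2$.
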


\begin{proof}
We start by supposing that $D(\tilde M)$ is equal to the full connected component of $\partial \mathbf{H}^n-L(\Gamma)$ on which $D(\tilde M)$ is defined.
\begin{itemize}
\item If $L(\Gamma)=\emptyset$ then by lemma \ref{lem-gamma-eq} the conclusion follows.
\item If $L(\Gamma)=\{a\}$ then $M$ gets a similarity structure which must be complete by Fried's theorem \ref{thm-fried}.
\item If $L(\Gamma)$ is constituted of at least two points, then by hypothesis the developing map avoids $L(\Gamma)$ and the proposition \ref{prop-deuxpoints} concludes.
\end{itemize}

Now we suppose that the developing map is a covering onto its image.
\begin{itemize}
\item If $D$ is a covering onto $\partial\mathbf{H}^n$ or $\partial\mathbf{H}^n-\{a\}$, then $D$ is a diffeomorphism and $\Gamma$ is discrete. It must avoid $L(\Gamma)$. If $D$ is a covering onto $\partial\mathbf{H}^n$ then $L(\Gamma)=\emptyset$. If $D$ is a covering onto $\partial\mathbf{H}^n-\{a\}$ then $L(\Gamma)\neq\emptyset$ by lemma \ref{lem-gamma-eq}, hence $L(\Gamma)$ is exactly $a$.
\item Now suppose that $D$ is a covering and avoids at least two points. Let $\Omega=\partial\mathbf{H}^n-D(\tilde M)$. Then by minimality $L(\Gamma)\subset \Omega$ and $L(\Gamma)$ can not be $\emptyset$ nor $\{a\}$, otherwise it would be a complete boundary manifold (by lemma \ref{lem-gamma-eq}) or a complete similarity manifold (by Fried's theorem \ref{thm-fried}) and $\Omega$ would be equal to $\emptyset$ or $a$. So $L(\Gamma)$ contains at least two points and we conclude by proposition \ref{prop-deuxpoints}.\qedhere
\end{itemize}
\end{proof}

\printbibliography

\end{document}